\documentclass[11pt]{amsart}
\usepackage{geometry}                % See geometry.pdf to learn the layout options. There are lots.
\geometry{letterpaper}                   % ... or a4paper or a5paper or ... 
\usepackage{graphicx}
\usepackage{amssymb}
\usepackage{epstopdf}
\usepackage{comment}
\usepackage{color}
\DeclareGraphicsRule{.tif}{png}{.png}{`convert #1 `dirname #1`/`basename #1 .tif`.png}

\usepackage{pstricks, pst-node, pst-tree}
%\graphicspath{{Figures/}}

\title[Surjective polymorphisms of directed reflexive cycles]
{Surjective polymorphisms of \\ directed reflexive cycles}

\author[I. Larivi\`ere]{Isabelle Larivi\`ere}
\author[B. Larose]{Beno\^it Larose}
\author[D. E. Pazmi\~no Pullas]{David Emmanuel Pazmi\~no Pullas}

%\date{}                                           % Activate to display a given date or no date
\date{\today}
\begin{document}
\newtheorem{dummy}{Dummy}[section]

\newtheorem{conj}[dummy]{Conjecture}
\newtheorem{lemma}[dummy]{Lemma}
\newtheorem{prop}[dummy]{Proposition}
\newtheorem{theorem}[dummy]{Theorem}
\newtheorem{corollary}[dummy]{Corollary}
\newtheorem{definition}[dummy]{Definition}
\newtheorem{example}[dummy]{Example}

%SHORTCUTS FOR MATHCAL

\newcommand{\cA}{\mathcal{A}}
\newcommand{\cB}{\mathcal{B}}
\newcommand{\cC}{\mathcal{C}}
\newcommand{\cD}{\mathcal{D}}
\newcommand{\cE}{\mathcal{E}}
\newcommand{\cF}{\mathcal{F}}
\newcommand{\cG}{\mathcal{G}}
\newcommand{\cH}{\mathcal{H}}
\newcommand{\cI}{\mathcal{I}}
\newcommand{\cJ}{\mathcal{J}}
\newcommand{\cK}{\mathcal{K}}
\newcommand{\cL}{\mathcal{L}}
\newcommand{\cM}{\mathcal{M}}
\newcommand{\cN}{\mathcal{N}}
\newcommand{\cO}{\mathcal{O}}
\newcommand{\cP}{\mathcal{P}}
\newcommand{\cQ}{\mathcal{Q}}
\newcommand{\cR}{\mathcal{R}}
\newcommand{\cS}{\mathcal{S}}
\newcommand{\cT}{\mathcal{T}}
\newcommand{\cU}{\mathcal{U}}
\newcommand{\cV}{\mathcal{V}}
\newcommand{\cW}{\mathcal{W}}
\newcommand{\cX}{\mathcal{X}}
\newcommand{\cY}{\mathcal{Y}}
\newcommand{\cZ}{\mathcal{Z}}

%SHORTCUTS FOR MATHBF

\newcommand{\bA}{\mathbf{A}}
\newcommand{\bB}{\mathbf{B}}
\newcommand{\bC}{\mathbf{C}}
\newcommand{\bD}{\mathbf{D}}
\newcommand{\bE}{\mathbf{E}}
\newcommand{\bF}{\mathbf{F}}
\newcommand{\bG}{\mathbf{G}}
\newcommand{\bH}{\mathbf{H}}
\newcommand{\bI}{\mathbf{I}}
\newcommand{\bJ}{\mathbf{J}}
\newcommand{\bK}{\mathbf{K}}
\newcommand{\bL}{\mathbf{L}}
\newcommand{\bM}{\mathbf{M}}
\newcommand{\bN}{\mathbf{N}}
\newcommand{\bO}{\mathbf{O}}
\newcommand{\bP}{\mathbf{P}}
\newcommand{\bQ}{\mathbf{Q}}
\newcommand{\bR}{\mathbf{R}}
\newcommand{\bS}{\mathbf{S}}
\newcommand{\bT}{\mathbf{T}}
\newcommand{\bU}{\mathbf{U}}
\newcommand{\bV}{\mathbf{V}}
\newcommand{\bW}{\mathbf{W}}
\newcommand{\bX}{\mathbf{X}}
\newcommand{\bY}{\mathbf{Y}}
\newcommand{\bZ}{\mathbf{Z}}
\newcommand{\bGS}{{\bf S}}

%SHORTCUTS FOR MATHbb

\newcommand{\bbA}{\mathbb{A}}
\newcommand{\bbB}{\mathbb{B}}
\newcommand{\bbC}{\mathbb{C}}
\newcommand{\bbD}{\mathbb{D}}
\newcommand{\bbE}{\mathbb{E}}
\newcommand{\bbF}{\mathbb{F}}
\newcommand{\bbG}{\mathbb{G}}
\newcommand{\bbH}{\mathbb{H}}
\newcommand{\bbI}{\mathbb{I}}
\newcommand{\bbJ}{\mathbb{J}}
\newcommand{\bbK}{\mathbb{K}}
\newcommand{\bbL}{\mathbb{L}}
\newcommand{\bbM}{\mathbb{M}}
\newcommand{\bbN}{\mathbb{N}}
\newcommand{\bbO}{\mathbb{O}}
\newcommand{\bbP}{\mathbb{P}}
\newcommand{\bbQ}{\mathbb{Q}}
\newcommand{\bbR}{\mathbb{R}}
\newcommand{\bbS}{\mathbb{S}}
\newcommand{\bbT}{\mathbb{T}}
\newcommand{\bbU}{\mathbb{U}}
\newcommand{\bbV}{\mathbb{V}}
\newcommand{\bbW}{\mathbb{W}}
\newcommand{\bbX}{\mathbb{X}}
\newcommand{\bbY}{\mathbb{Y}}
\newcommand{\bbZ}{\mathbb{Z}}

\newcommand{\bnote}[1]{{\color{red} [{\bf Benoit note:} #1]}}
\newcommand{\dnote}[1]{{\color{blue} [{\bf David note:} #1]}}

\newcommand{\csp}{\mathsf{CSP}}

\begin{abstract} A {\em reflexive cycle} is any reflexive digraph whose underlying undirected graph is a cycle. Call a relational structure {\em S\l upecki} if its surjective polymorphisms are all essentially unary.  We  prove that all reflexive cycles of girth at least 4 have this property. 
\end{abstract}

\maketitle
%\section{}
%\subsection{}

%\acks{We wish to thank many people for their help.}

\section{Introduction}  
Surjective polymorphisms of finite relational structures play a central role in the study of the algorithmic complexity of quantified contraint satisfaction problems (see for instance \cite{DBLP:journals/iandc/BornerBCJK09, DBLP:conf/dagstuhl/Martin17,DBLP:conf/esa/LaroseMMPSZ21}). In this paper we focus on the surjective polymorphisms of reflexive digraphs. 
These relational structures are general enough to encode a wide range of computational problems (see for instance \cite{MR1630445}), and on the other hand support a rich categorical structure: to each reflexive digraph one may naturally associate a simplicial complex which gives some information on the nature of the polymorphisms of the digraph  \cite{MR2101222}. For instance, it is known that  if the $k$-th homotopy group of the complex is non-trivial for some $k \geq 1$  but all of the proper retracts of the digraph have trivial  $k$-th homotopy group, then the digraph has only trivial idempotent polymorphisms \cite{MR2232298}. 
In particular, any digraph whose simplicial complex triangulates a sphere has this property,  among them truncated Boolean lattices, the directed 3-cycle and all cycles of girth at least 4. 

Let us call a relational structure  {\em S\l upecki} if all its surjective polymorphisms are essentially unary. Such structures are known to have Pspace-complete QCSP \cite{DBLP:journals/iandc/BornerBCJK09}; since in particular their idempotent polymorphisms are projections, if one considers the analogous problem with added unary relations, the CSP is NP-complete. 
The following question follows naturally from the previous discussion: {\em Which reflexive digraphs are S\l upecki~? } 
In a companion paper \cite{larose-pullas-preprint} we investigate this and related questions; the present paper focuses on cycles, which triangulate 1-spheres.  By reflexive cycle, we mean any reflexive digraph whose underlying undirected graph is a cycle; this allows for any orientation of edges and symmetric edges as well. Our main result is the following:

\begin{theorem} Let $n \geq 4$, and let $\bbG$ be a reflexive $n$-cycle. Then $\bbG$ is S\l upecki.  \end{theorem}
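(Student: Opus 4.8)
The plan is to attach to every polymorphism of $\bbG$ a tuple of \emph{winding numbers} and to show that a surjective polymorphism has exactly one nonzero winding number. Since the underlying graph of $\bbG$ is $C_n$ with $n\ge 4$, its associated complex is a triangulated $1$-sphere whose universal cover is the infinite reflexive path $\bbZ$, with covering map $p\colon\bbZ\to\bbG$. An endomorphism $h$ of $\bbG$ then has a well-defined winding number $w(h)\in\{-1,0,1\}$ — the one-turn displacement of a lift is a sum of $n$ terms in $\{-1,0,1\}$ — with $w(h)=\pm1$ exactly when $h$ is an automorphism (a rotation or a reflection) and $w(h)=0$ exactly when the image of $h$ is a proper subpath. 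For a $k$-ary polymorphism $f$ and a coordinate $i$, freezing the other coordinates to constants yields an endomorphism whose winding number $w_i(f)$ is independent of the chosen constants, because replacing one frozen value by a neighbour produces a pointwise-adjacent, hence homotopic, endomorphism. Working in the clique complex of the categorical power $\bbG^{\otimes k}$ — which is squeezed between the Cartesian and the strong power of $C_n$ and so has fundamental group $\bbZ^k$ with the expected generators — one checks that $f$ induces $(w_1(f),\dots,w_k(f))$ on $\pi_1$, and in particular $w(f(x,\dots,x))=\sum_i w_i(f)$.

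I would then prove two facts. First, \emph{at most one $w_i(f)$ is nonzero}: if $w_i(f)$ and $w_j(f)$ were both nonzero, freezing the remaining coordinates gives a binary polymorphism $g$ all of whose rows and columns are automorphisms (a Latin square); restricting $g$ to the diagonal forces $|w_1(g)+w_2(g)|\le1$, so the two signs are opposite, and a short computation then forces $g(x,y)=\varepsilon_1x+\varepsilon_2y+a$ with $\{\varepsilon_1,\varepsilon_2\}=\{1,-1\}$, which one checks directly fails to preserve any reflexive $n$-cycle once $n\ge4$ (in the symmetric case it maps a pair at distance $1$ to a pair at distance $2$; in the directed case it traverses an edge the wrong way). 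Second, \emph{a surjective $f$ has some $w_i(f)\ne0$}: if all $w_i(f)=0$ then $f$ factors as $p\circ\widetilde f$ for a graph homomorphism $\widetilde f\colon\bbG^{\otimes k}\to\bbZ$; being non-expansive into the reflexive path, the image of $\widetilde f$ spans at most $1+\mathrm{diam}(\bbG^{\otimes k})$ integers, so as long as $\mathrm{diam}(\bbG^{\otimes k})\le n-2$ the composite $p\circ\widetilde f$ omits a vertex and $f$ is not surjective.

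Granting these, the theorem follows quickly: for a surjective $f$ exactly one $w_i(f)$ is nonzero and equals $\pm1$, so $e:=f(x,\dots,x)$ has winding number $\pm1$, hence is an automorphism of $\bbG$; then $e^{-1}\circ f$ is a surjective idempotent polymorphism, which by the rigidity of idempotent polymorphisms of digraphs whose complex triangulates a sphere \cite{MR2232298} must be a projection $\pi_j$, and therefore $f=e\circ\pi_j$ is essentially unary.

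The main obstacle I anticipate is the diameter bound $\mathrm{diam}(\bbG^{\otimes k})\le n-2$ required in the second fact: it fails precisely when $\bbG$ is the directed cycle $\vec C_n$ or its reverse, and those two orientations I would handle separately, using that the windings of their endomorphisms are nonnegative, so that $w_i(f)=0$ forces $f$ to be constant in coordinate $i$ — hence constant, hence non-surjective. A secondary technical point to pin down is that the clique complex of $\bbG^{\otimes k}$ really does have the homotopy type of the $k$-torus for an arbitrary orientation of the cycle, which is what legitimises the winding-number bookkeeping; this should follow from the observation that every unit square spanned by two coordinate directions already carries a diagonal edge in $\bbG^{\otimes k}$, irrespective of the orientations involved.
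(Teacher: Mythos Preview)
Your approach is genuinely different from the paper's, and the overall architecture --- winding numbers, the two ``facts'', then reduction to idempotent rigidity --- is attractive. The first fact (at most one nonzero $w_i$) is essentially correct: once you know each row and each column of the binary $g$ is an automorphism of winding $\pm 1$, hence a rotation or a reflection, the form $g(x,y)=\varepsilon_1 x+\varepsilon_2 y+a$ follows, and the very existence of the rotation $x\mapsto x+c$ as an automorphism for every $c$ forces $\bbG$ to be either the symmetric or the directed cycle, which you then dispose of.

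The serious gap is your second fact. You assert that the bound $\mathrm{diam}(\bbG^{\otimes k})\le n-2$ fails \emph{precisely} for the directed cycle and its reverse, and propose to treat only those two separately. This is false. Take the $4$-crown $\bbG=C(+-+-)$: in $\bbG^{2}$ the vertices $(0,1)$ and $(2,3)$ have no common neighbour (a direct check of the out/in neighbourhoods), so their distance is at least $3>n-2=2$. More dramatically, for $\bbG=C(++--)$ one has a height function $h$ with $h(0)=0$, $h(2)=2$; since every directed edge of $\bbG$ raises height by~$1$, any walk in $\bbG^{2}$ from $(0,2)$ to $(2,0)$ needs at least two ``forward'' product-steps and two ``backward'' ones, giving distance $\ge 4$. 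In general, \emph{every} balanced cycle (one with word-height~$0$, e.g.\ all crowns) violates your diameter bound once $k$ is large enough, and your separate argument for directed cycles does not transfer: such cycles have plenty of non-constant endomorphisms of winding~$0$ (for $C(+-+-)$, the map $0,2\mapsto 0$, $1,3\mapsto 1$ is one).

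So the lifting step ``all $w_i=0 \Rightarrow f$ not surjective'' is exactly where the real work hides, and it is not a diameter estimate. The paper confronts the same obstruction but from a different angle: it restricts attention to products of proper \emph{subpaths} (which have trivial $\sigma_1$, so the lift is automatic), introduces the \emph{path condition} (``no product of $(n{-}2)$-subpaths surjects onto $\bbG$''), and then spends most of its effort on a delicate combinatorics-on-words argument to \emph{characterise} the cycles that fail this condition (Theorem~\ref{theorem-char}). Those exceptional cycles all carry a self-dual subpath with at least two symmetric edges, and are then shown to be S\l upecki by an embedding criterion imported from the companion paper (Theorem~\ref{theorem-embedding}). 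In short, the class of cycles your diameter argument misses is large and heterogeneous, and the paper's route --- path condition, word combinatorics, then a separate structural argument for the residual family --- is doing work that your outline has not yet replaced.

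A smaller point you flag yourself: the identification of $\pi_1$ of the clique complex of $\bbG^{\otimes k}$ with $\bbZ^k$ is plausible but needs an argument; your observation about diagonals in unit squares is the right start but is not yet a proof of homotopy equivalence with the $k$-torus for an arbitrary orientation pattern.
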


Some very special cases were known previously:  directed cycles (a special case of a result of Fearnley \cite{MR2480633}), and alternating cycles of even length (i.e. posets called {\em crowns}) (Demetrovics and Ronyai  \cite{MR1020459}).

The strategy is as follows. Let $\bbG$ be an $n$-cycle with $n \geq 4$. Suppose there exists an onto polymorphism of $\bbG$ which is not essentially unary. Then it follows easily from the relational description of the S\l upecki clone (Lemma \ref{lemma-slupecki}) that there exists an onto homomorphism from $\prod_i \bbQ_i$ onto $\bbG$ where the $\bbQ_i$ are induced subpaths of $\bbG$ of length at most $n-2$. This property motivates the following definition: we say that the $n$-cycle $\bbG$ satisfies the {\em path condition} if there exists no homomorphism from 
a product of subpaths of $\bbG$ of length at most $n-2$ onto $\bbG$ (Definition \ref{def-path-condition}); obviously  a cycle with the path condition is S\l upecki (Lemma \ref{lemma-paths}). So the rest of the paper consists of (1) characterising which cycles fail the path condition, and then (2) showing these are also S\l upecki. To achieve (1), which is the most technically involved result of this paper, we first prove that if $\bbG$ fails the path condition, then in fact there exists a homomorphism from a product of subpaths of $\bbG$ of length at most $n-2$ to $\bbG$ whose image misses an arc of $\bbG$, i.e. is onto a subpath $\bbP$ of $\bbG$ of length $n-1$ (Theorem \ref{lemma-onto-path}). This turns the problem into a purely combinatorial question on words since existence of a homomorphism to a path is  purely metric  (Theorem \ref{theorem-surj-path}). We thus obtain a description of cycles failing the path condition (Theorem \ref{theorem-char}). It follows from this characterisation that these cycles contain a self-dual path $\bbP$ of length $n-1$ that contains at least two symmetric edges. We then  invoke a  criterion from our companion paper \cite{larose-pullas-preprint} to prove that any such cycle is S\l upecki   (Theorem \ref{theorem-last}.)\\

 \noindent{\bf Acknowledgments.} We wish to thank Hubie Chen and Barnaby Martin for useful discussions. We acknowledge the help of Francis Clavette, Marc Scattolin and Jakob Thibodeau for computations.

\section{Preliminaries: notation, definitions, etc.}

\subsection{Reflexive digraphs}

A {\em digraph} $\bbG = \langle G;E \rangle$ consists of a non-empty set  $G$ of {\em vertices} and  a binary relation $E$ on $G$; the pairs in $E$ are called the {\em arcs} or {\em edges} of $\bbG$.  It is {\em reflexive} if $(x,x) \in E$ for all $x \in E$. When we consider digraphs $\bbG$, $\bbH$, etc. we denote their set of vertices by $G$, $H$, etc. We sometimes write $u \rightarrow v$ to mean that $(u,v)$ is an arc of a digraph. Consider the simple graph obtained from the digraph $\bbG$ as follows: it has the same set of vertices $G$, and two vertices $x$ and $y$ are adjacent if one of $(x,y)$ or $(y,x)$ is an arc of $\bbG$. We say $\bbG$ is {\em connected} if this  graph is connected.  We say that a digraph $\bbH$ is {\em embedded} in a digraph $\bbG$ if it is isomorphic to an induced subdigraph of $\bbG$.\\

\begin{center} {\bf In this paper, all digraphs are assumed to be reflexive.} \end{center}  \mbox{}\\

Let $\bbG$ and $\bbH$ be digraphs. A map $f:G \rightarrow H$ is a {\em homomorphism} if it preserves edges, i.e. if $(x,y)$ is an arc of $\bbG$ then $(f(x),f(y))$ is an arc of $\bbH$.  The {\em product} $\bbG \times \bbH$ of two digraphs $\bbG$ and $\bbH$ is the usual product of relational structures, i.e.  the digraph with set of vertices $G \times H$ and arcs $((g_1,h_1),(g_2,h_2))$ where $(g_1,g_2)$ and $(h_1,h_2)$ are arcs of $\bbG$ and $\bbH$ respectively; notice that $\bbG \times \bbH$ is reflexive if both $\bbG$ and $\bbH$ are reflexive. For every positive integer $k$, the product of $k$ digraphs is defined is the obvious way;  $\bbG^k$ is the product of $\bbG$ with itself $k$ times. A $k$-ary {\em polymorphism} of $\bbG$ is a homomorphism from $\bbG^k$ to $\bbG$; it is {\em idempotent} if $f(x,\dots,x)=x$ for all $x \in G$. It is {\em essentially unary} if there exists some $1 \leq i \leq k$ and a homomorphism $g:\bbG \rightarrow \bbG$ such that $f(x_1,\dots,x_k) = g(x_i)$ for all $x_j \in G$.

\begin{definition} We say that the digraph $\bbG$ is {\em S\l upecki} if all its surjective polymorphisms are essentially unary. \end{definition}

\begin{definition} Let $k \geq 0$.  A {\em path of length $k$} is a  digraph with vertex set $\{0,1,\dots,k\}$ where for each $0 \leq i \leq k-1$, one or both of the arcs $\{(i,i+1),(i+1,i)\}$ is present, and there are no other arcs.  \end{definition}

\begin{definition} Let $n \geq 3$. An  {\em $n$-cycle} is a  digraph with vertex set $\{0,1,\dots,n-1\}$ where for each $0 \leq i < n-1$, one or both of the arcs $\{(i,i+1),(i+1,i)\}$ is present,  one or both of the arcs $\{(n-1,0),(0,n-1)\}$ is present, and there are no other arcs. The integer $n$ is called the {\em girth} of the cycle.  \end{definition}

\subsection{Words}
 It will be convenient for us to view paths (and cycles) as words on the alphabet $\cA=\{+,-,*\}$, where $*$ stands for a symmetric edge. 
 As usual, if $k$ is a positive integer, a {\em word $W$ on $\cA$ of length $k$} is a finite sequence $W=x_1\cdots x_k$ where $x_i \in \cA$; the integer $k$ is the {\em length of $W$}, denoted $|W|$. There is a unique word on $\cA$ of length 0, the {\em empty word},  and we denote it by $\epsilon$. As usual, if $X=x_1\cdots x_k$ and $Y=y_1 \cdots y_l$ are words on $\cA$, the {\em concatenation} of $X$ and $Y$ is the word $XY =x_1\cdots x_ky_1 \cdots y_l$. If $X = \epsilon$ then $XY=YX=Y$. If $n$ is a positive integer and $X$ is a word, then $X^n$ is the concatenation of $n$ copies of $X$, and we declare $X^0 = \epsilon$. 
  We say that the word $X=x_1 \cdots x_k $ is a {\em subword} of the word $Y$ if there exist words $W_0,\dots,W_k$ such that $Y =W_0 x_1 W_1 x_2 \dots x_k W_k$, i.e. $X$ is obtained from $Y$  by removing some symbols. The empty word is a subword of every word. The word $X$ is a {\em substring} of the word $Y$ if there exist words $A$ and $B$ such that 
    $Y = AXB$; if $A $ is empty, $X$ is a {\em prefix} of $Y$, and if $B$ is empty, $X$ is a {\em suffix} of $Y$. {\em In the rest of this paper, unless otherwise stated, all words are on $\cA$.}  \\

 The next definition interprets words as paths and vice-versa.

  \begin{definition} Let $k$ be a positive integer. 
  
  \begin{enumerate}
 \item Let  $W=x_1 \cdots x_k$ be a non-empty word on $\cA$. The  {\em path associated to $W$}, denoted $P(W)$, is the digraph with vertex set  $\{0,\dots,k\}$ where  $(i,i+1)$ is an edge if $x_i = +$,   $(i+1,i)$ is an edge if $x_i = -$, and both $(i,i+1)$ and  $(i+1,i)$ are edges if $x_i = *$.  We let $P(\epsilon)$ be the one-element path (with vertex set $\{0\}$). 
 
 \item Given a path $\bbP$ of length $k$, define the word $w(\bbP) =x_1x_2\cdots x_k $ as follows: $x_i = *$ if both $(i,i+1)$ and  $(i+1,i)$ are edges of $\bbP$; otherwise, $x_i = +$ if $(i,i+1)$ is an edge of $\bbP$ and $x_i = -$ if $(i+1,i)$ is an edge of $\bbP$. If $\bbP$ has length 0 then $w(\bbP) = \epsilon$. 
 
  \end{enumerate}\end{definition}
  
  \begin{figure}[htb]
\begin{center}
\includegraphics[scale=0.7]{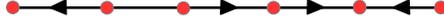}
 \caption{The path $\bbP = P(W)$ where $W = -*++-$.} \label{cycle-even-odd}
 \end{center}
\end{figure}

We now define an ordering and an involution on the set of words on $\cA$. 
 
  \begin{definition}  Let $U$ and $V$ be words of length $k$ and $l$ respectively. 
  
 We say that a homomorphism $f:P(U) \rightarrow P(V)$ is {\em end-point preserving} if $f(0)=0$ and $f(k)=l$;  and we write $U \leq V$ if there exists an end-point preserving homomorphism from $P(V)$ onto $P(U)$.

 \end{definition}

  It is a simple exercise to verify that this defines a partial ordering on the set of words, i.e. it is reflexive, antisymmetric and transitive. 
 In this ordering, the empty word $\epsilon$ is a minimal element, it is covered by $*$, which lies below $+$ and $-$ which are incomparable.

    \begin{definition} 
  
 Let $W$ be a word. If $W = \epsilon$ then $\overline{W}=\epsilon$; otherwise, if $W=x_1\cdots x_k$ let $\overline{W} = \overline{x_k}\cdots \overline{x_1}$ where $\overline{+}$ is $-$, $\overline{-}$ is $+$ and  $\overline{*}$ is $*$.

  \end{definition}

Intuitively, the involution of a word $W$ is the word $w(P') $ where $P'$ is the path $P(W)$ traversed backwards. For example, $\overline{+*-+-}$ is $+-+*-$. We shall say that a path $\bbP$ is {\em self-dual} if $w(\bbP) = \overline{w(\bbP)}$.

%It will be convenient for us to use the following notation: 

\begin{definition} Let $\bbG$ be a digraph, let  $x$ and $y$ be vertices of $\bbG$ and let $W$ be a word of length $k$. We write $x\, W y$ if there exists a homomorphism of $P(W)$ to $\bbG$ mapping 0 to $x$ and $k$ to $y$.   \end{definition}

  It will also be convenient for us to denote cycles by words on $\cA$: the word $W$ lists the succession of arcs of the cycle starting from some fixed vertex.

  \begin{definition}
 Let  $W=x_1\cdots x_k$ be a non-empty word. The   {\em cycle associated to $W$}, denoted $C(W)$, is the digraph with vertex set  $\{0,\dots,k-1\}$ where  for each $1 \leq i \leq k$,
   $(i-1,i)$ is an edge if $x_i = +$,   $(i+1,i)$ is an edge if $x_i = -$, and both $(i,i+1)$ and  $(i+1,i)$ are edges if $x_i = *$, where the indices are considered modulo $k$.
   
\end{definition}

\begin{figure}[htb]
\begin{center}
\includegraphics[scale=0.4]{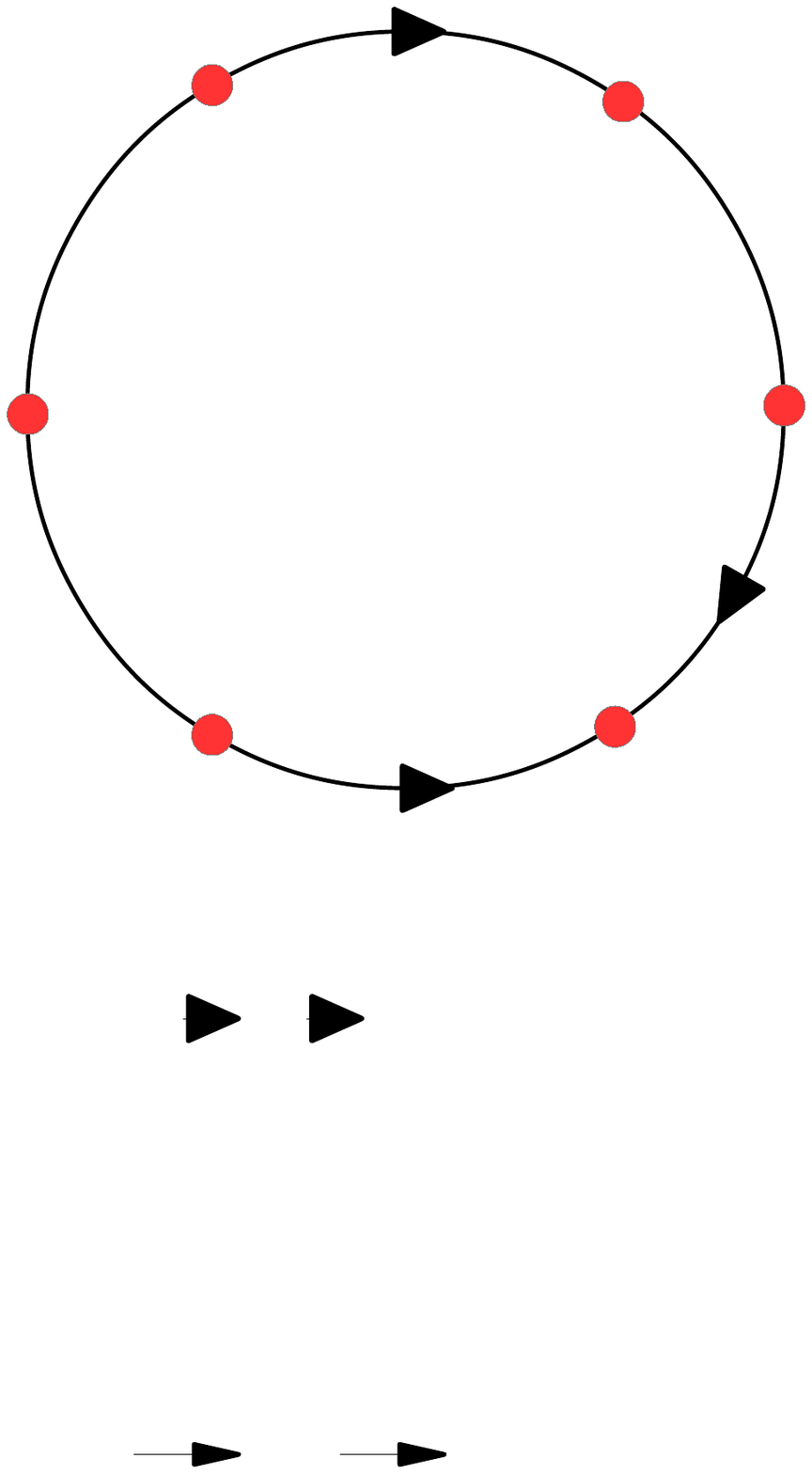}
 \caption{The cycle $\bbG = C(W)$ where $W = +*+-**$.} \label{cycle-even-odd}
 \end{center}
\end{figure}

Notice that, {\em up to isomorphism}, there are several different words that  represent a given cycle, depending on the chosen starting point and the orientation. We shall feel free to write $\bbG = C(W)$ for any of these words. 

Furthermore we shall  often  abuse notation;  for instance, we may write $\bbP \leq \bbQ$ where $\bbP$ and $\bbQ$ are paths, meaning of course that $w(\bbP) \leq w(\bbQ)$; or 
if $\bbG$ is a cycle, $\bbS$ is a path,  we may write $\bbG = (\bbS**)^{2}\, \bbS+$ instead of $\bbG = C((w(\bbS)**)^2 w(\bbS)+)$, and so on. \\

The next lemma gathers some useful consequences of the definitions.

\begin{lemma} \label{lemma-utilities} Let $X,Y,W$ be words and let $a,b \in \cA$.
\begin{enumerate}

\item Let $\bbP$ and $\bbQ$ be paths. If there exists an end-point preserving map from $\bbP$ onto $\bbQ$ then there exists one which is monotone, i.e. a map $f$ satisfying $f(i) \leq f(j)$ whenever $i \leq j$. 
\item If $\bbP \geq \bbQ$, and $\bbP=aX$ and $\bbQ=bY$ where $a \geq b$ then there exists an end-point preserving monotone map from $\bbP$ onto $\bbQ$ that sends $a$ onto $b$. 
\item If $X$ is a subword of $Y$ then $X \leq Y$; if $X \leq Y$ then $|X| \leq |Y|$. 
\item $X \leq Y$ if and only if $\overline{X} \leq \overline{Y}$; $\overline{XY}=\overline{Y}\,\overline{X}$; 
\item  if $aX \geq bY$ then $X \geq Y$; and if $a \ngeq b$ then $X \geq bY$. 
\item let $f:\bbG \rightarrow \bbH$ be a homomorphism and let $x,y \in G$. If $x\, W y$ then $f(x)\, W  f(y)$. 
\item if $X \leq A$ and $Y \leq B$ then $XY \leq AB$. 
\end{enumerate}
\end{lemma}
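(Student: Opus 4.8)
The plan is to treat items (3), (4), (6) and (7) as essentially immediate from the definitions and to concentrate the work on item (1), from which (2) and (5) follow with a little extra bookkeeping. For (1), I would induct on $|\bbP|$. The workhorse is that a homomorphism $f$ between reflexive paths is a \emph{$\pm 1$-walk}: for consecutive vertices $i,i+1$ of $\bbP$ the edge between them forces $(f(i),f(i+1))$ or $(f(i+1),f(i))$ to be an edge of $\bbQ$, and the edges of $\bbQ$ are loops together with edges between consecutive integers, so $|f(i)-f(i+1)|\le 1$. If $|\bbP|=|\bbQ|$ then an end-point-preserving surjection is a bijection, and a bijective homomorphism of paths fixing $0$ is forced to be the identity (induct on the vertex, using the $\pm1$ property), which is monotone. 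If $|\bbP|>|\bbQ|$ then $f$ is not injective, so choose $p<q$ with $f(p)=f(q)$ and $q-p$ minimal; either $q=p+1$ (a \emph{plateau} $f(p)=f(p+1)$), or a discrete intermediate-value argument --- the walk cannot pass from $f(p)+1$ at vertex $p+1$ to $f(p)-1$ at vertex $q-1$ without revisiting the value $f(p)$, contradicting minimality --- forces $q=p+2$ (a \emph{backtrack} $f(p)=f(p+2)$). In either case I would delete the redundant vertex (or the two vertices $p+1,p+2$), reconnect $\bbP$ using the edge lying just beyond them to obtain a shorter path $\bbP'$ with an induced homomorphism $f'\colon\bbP'\to\bbQ$, apply the induction hypothesis to $f'$, and then re-insert the deleted vertices as copies of a neighbour (harmless since $\bbQ$ is reflexive) to recover a monotone surjection $\bbP\to\bbQ$.

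The step I expect to be the main obstacle is checking that the reduction preserves surjectivity in the backtrack case, since deleting the tip $p+1$ could a priori remove the value $v:=f(p+1)$ from the image. I would handle this by showing that $v$ occurs at a second vertex: if $v$ equals the maximum value $|\bbQ|$ it is also attained at the last vertex of $\bbP$; and if $v<|\bbQ|$, then since the walk starts at $0$, sits at $v-1$ at vertex $p+2$, and must still climb back to $|\bbQ|>v$ afterwards, a second intermediate-value argument produces an occurrence of $v$ beyond vertex $p+1$. Minimality of $q-p$ is precisely what makes both intermediate-value arguments go through.

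For (2), invoke (1) to get a monotone end-point-preserving surjection $g\colon\bbP\to\bbQ$; then $g(1)\in\{0,1\}$ by the $\pm 1$ property. If $g(1)=1$ we are done; if $g(1)=0$, let $m$ be least with $g(m)=1$ (so $g$ is $0$ on $\{0,\dots,m-1\}$) and redefine $g$ to equal $1$ on $\{1,\dots,m-1\}$. This keeps $g$ monotone, onto and end-point-preserving, and the only edge whose image changes is the first one, of type $a$, now sent onto the first edge of $\bbQ$, of type $b$; the hypothesis $a\ge b$ is exactly the statement that every arc of an $a$-edge is an arc of a $b$-edge, so $g$ remains a homomorphism. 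For (5), take a monotone end-point-preserving surjection $g\colon P(aX)\to P(bY)$ and inspect $g(1)\in\{0,1\}$: if $g(1)=1$, restricting $g$ to the last $|X|+1$ vertices of $P(aX)$ gives an end-point-preserving surjection onto $P(Y)$, i.e.\ $Y\le X$; if $g(1)=0$, that restriction is still onto $P(bY)$, giving $bY\le X$, whence $Y\le X$ because $Y\le bY$ by item (3) ($Y$ is a subword of $bY$). The computation in (2) also shows that $g(1)=1$ would force $a\ge b$, so when $a\not\ge b$ we are in the second case and $X\ge bY$.

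Finally, (3): if $X=x_1\cdots x_k$ is a subword of $Y$, write $Y=W_0x_1W_1\cdots x_kW_k$ and map $P(Y)$ onto $P(X)$ by collapsing each block $P(W_j)$ to a single vertex while keeping the $x_i$-edges --- a homomorphism since $P(X)$ is reflexive, plainly end-point-preserving and onto; and $X\le Y$ forces $|X|+1\le|Y|+1$ by counting vertices. Item (4) is the observation that $P(\overline W)$ is $P(W)$ with the vertex order reversed and every arc flipped, so homomorphisms transport across (preserving end-points and surjectivity), while $\overline{XY}=\overline{Y}\,\overline{X}$ is immediate from the definition of $\overline{\,\cdot\,}$. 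Item (6) is composition of a homomorphism $P(W)\to\bbG$ with $f\colon\bbG\to\bbH$. For (7), given end-point-preserving surjections $P(A)\to P(X)$ and $P(B)\to P(Y)$, glue them along the shared vertex --- the last vertex of $P(A)$ is the first of $P(B)$ inside $P(AB)$, and both maps send it to the vertex where $P(X)$ and $P(Y)$ meet inside $P(XY)$ --- to obtain an end-point-preserving surjection $P(AB)\to P(XY)$.
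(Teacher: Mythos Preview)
Your proof is essentially correct; the main work is in (1), where you take a genuinely different route from the paper and leave one small case unaddressed.

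For (1), the paper inducts on $|\bbP|+|\bbQ|$ and branches on whether $0$ is the unique preimage of $0$ under $f$. If so, $f(1)=1$ and one removes vertex $0$ from each path and inducts; if not, one picks any $x>0$ with $f(x)=0$, restricts $f$ to the subpath on $\{x,x+1,\dots\}$ (still end-point preserving and onto $\bbQ$, since its image is connected and contains both endpoints), obtains a monotone map by induction, and extends it by $0$ on $\{0,\dots,x-1\}$. This completely sidesteps surjectivity bookkeeping. Your collision-deletion approach also works and is a natural idea, but it is heavier precisely because you must verify that deleting the tip of a backtrack preserves surjectivity.

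That is where your oversight lies. You assert that in the backtrack case the walk ``sits at $v-1$ at vertex $p+2$'', but this only holds when $f(p+1)=f(p)+1$. If instead $f(p+1)=f(p)-1$, then $v=f(p)-1$ and $f(p+2)=v+1$, so your intermediate-value argument to the right of $p+1$ does not apply. The fix is symmetric: if $v=0$ it is already attained at vertex $0$; otherwise the walk passes from $0<v$ at vertex $0$ to $v+1$ at vertex $p$, so by the discrete intermediate-value theorem it hits $v$ somewhere in $\{0,\dots,p\}$.

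Your arguments for (2) and (5) are essentially the paper's (the paper spells out only (2) and declares (3)--(7) easy); your treatments of (3), (4), (6), (7) are fine.
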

 
 \begin{proof} 
 (1) We use induction on $|\bbP|+|\bbQ|$: the result is trivial if $|\bbP|+|\bbQ|=0$. Let $f$ be an end-point preserving homomorphism from $\bbP$ onto $\bbQ$. (i) Suppose first that  $0$ is the only vertex of $\bbP$ mapped to $0$ by $f$, then $f(1) = 1$. Let $\bbP'$ is the subpath of $\bbP$ induced by the vertices other than 0; then by induction hypothesis we have a monotone map from $\bbP'$ onto the subpath of $\bbQ$ induced by the vertices other than 0, and we can extend it to a monotone map from $\bbP$ onto $\bbQ$. (ii) Otherwise, there exists an $x >0$ such that $f(x) = 0$; let $\bbP'$ be the subpath of $\bbP$ induced by $\{i: i \geq x\}$.  By induction hypothesis there exists a monotone map $g$ from $\bbP'$ onto $\bbQ$. Setting $h(t) = 0$ if $t < x$ and $h(t) = g(t)$ if $t \geq x$ defines a monotone homomorphism from $\bbP$ onto $\bbQ$. 
 
 (2) By (1) there exists a monotone map $f$ from $\bbP$ onto $\bbQ$. Let $x > 0$ such that $f(x) = 1$. Define a new map $g$ as follows: $g(0)=0$, $g(t)=1$ if $1 \leq t \leq x$ and $g(t)=f(t)$ for $t \geq x$. Since $a \geq b$, $g$ is a homomorphism, and it is clearly monotone and end-point preserving.
 
 Statements (3)-(7) are also easy.

 \end{proof}

\section{The path condition}

In this section, we  introduce a property that guarantees a cycle is S\l upecki, and prove a combinatorial characterisation that will allow us, in the next section,   to describe  explicitly those cycles that fail this property.

  \begin{definition} \label{def-path-condition} We say that the $n$-cycle $\bbG$ satisfies the {\em path condition} if one cannot find 
    paths $\bbQ_i$ of length $n-2$ that embed in $\bbG$ and a surjective homomorphism $f:\prod_i \bbQ_i \twoheadrightarrow \bbG$.
       \end{definition}

 Let $\theta \subseteq G^n$ be an $n$-ary relation on the non-empty set $A$, and let $f:A^k \rightarrow A$ be a $k$-ary operation on $A$. We say that $f$ {\em preserves} $\theta$ if, whenever $(x^1_j,\dots,x^n_j) \in \theta$ for $j=1,\dots,k$, we have that $(f(x^1_1,\dots,x^1_k),\dots,f(x^n_1,\dots,x^n_k)) \in \theta$.  The following fact can be found in \cite{MR2254622}. 
 
 \begin{lemma}[\cite{MR2254622}, Lemma 4.1] \label{lemma-slupecki} Let $A$ be an $n$-element set, $n \geq 2$. Then the operations on $A$ preserving the relation
 $$\theta = \{(a_1,\dots,a_n): |\{a_1,\dots,a_n\}| < n \}$$
 are precisely the essentially unary operations and the non-surjective operations on $A$.
 
 %\footnote{The clone $Pol(\theta)$ is the {\em S\l upecki} clone; it is known to be a maximal clone, and it is quite easy to see that it {\em contains} all non-surjective operations and all unary operations.} 
 \end{lemma}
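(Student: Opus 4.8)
The plan is to prove the two inclusions separately; the inclusion ``$\supseteq$'' is routine, while ``$\subseteq$'' is the substance (and requires $n\ge 3$).

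\smallskip\noindent\emph{The routine inclusion.} If $f$ is non-surjective, then every tuple of values of $f$ has all its entries in the range of $f$, a set of size $<n$, and so already lies in $\theta$; hence $f$ maps everything into $\theta$ and preserves it. If $f(x_1,\dots,x_k)=g(x_i)$ is essentially unary and $t^1,\dots,t^k\in\theta$, then the resulting tuple equals $(g(t^i_1),\dots,g(t^i_n))$; since $t^i\in\theta$ has two equal entries, so does this tuple, which therefore lies in $\theta$.

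\smallskip\noindent\emph{The main inclusion.} Let $f\colon A^k\to A$ preserve $\theta$. If $f$ is non-surjective we are done, so assume $f$ is surjective and aim to prove it essentially unary. I would first recast $\theta$-preservation in terms of the fibres $F_c=f^{-1}(c)$ ($c\in A$), which partition $A^k$ into $n$ nonempty blocks; write $\pi_i$ for the $i$-th coordinate projection. Call a family $\tau=(t_c)_{c\in A}$ with $t_c\in F_c$ a \emph{transversal}, and arrange its members as the columns of a $k\times n$ matrix; applying $f$ to the columns recovers the tuple $(c)_{c\in A}$, a permutation of $A$. Hence $f$ preserves $\theta$ if and only if for every transversal $\tau$ some row of its matrix is a permutation of $A$, i.e.\ for some coordinate $i$ the map $c\mapsto(t_c)_i$ is a bijection of $A$. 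The next step is to express essential unarity in the same language: $f$ is essentially unary if and only if there is a \emph{fixed} coordinate $i^*$ having this property for \emph{every} transversal. Indeed, if $i^*$ always works then each set $\pi_{i^*}(F_c)$ must be a singleton $\{p_c\}$ (otherwise one can pick a transversal making row $i^*$ non-injective), the $p_c$ ($c\in A$) are pairwise distinct, and so $F_c=\{\,x : x_{i^*}=p_c\,\}$; thus $f(x)$ is a bijective function of $x_{i^*}$ alone. The converse is clear.

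\smallskip\noindent\emph{The crux.} It therefore remains to show that if $f$ is surjective but not essentially unary (with $n\ge 3$) then some transversal has no bijective coordinate, contradicting $\theta$-preservation. Working one coordinate at a time, ``not essentially unary'' together with surjectivity does produce, for each $i$, a fibre with $|\pi_i(F_c)|\ge 2$ and hence a value lying in two of the sets $\pi_i(F_d)$ ($d\in A$); choosing representatives realising this coincidence makes coordinate $i$ non-injective on the resulting transversal. The obstacle is that these $k$ choices, made independently, may demand incompatible representatives of one and the same fibre. I would remove it by a final reformulation: a transversal with no bijective coordinate exists if and only if there exist $m_1,\dots,m_k\in A$ such that $f$ is still surjective when restricted to the subcube $B=\prod_i\bigl(A\setminus\{m_i\}\bigr)$ --- given such a $B$ choose $t_c\in B\cap F_c$, and conversely, from a bad transversal take $m_i$ to be any value missed by $c\mapsto(t_c)_i$. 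So everything comes down to showing that a surjective operation on a set of size $n\ge 3$ that is not essentially unary stays surjective after one ``slice'' $\{x_i=m_i\}$ is removed for each coordinate, for a suitable choice of the $m_i$; this is the step I expect to be the real work. I would prove it contrapositively: if \emph{every} such removal destroys surjectivity, then for every $m\in A^k$ some whole fibre is contained in the union of the slices $\{x_i=m_i\}$, and with only $n$ fibres available this is a very rigid condition. A counting and structural analysis of how the fibres can meet these slices should then force some fibre to be confined to a single slice $\{x_{i^*}=p\}$ for each value $p$, that is, force $f$ to depend bijectively on the single coordinate $i^*$, so that $f$ is essentially unary after all.
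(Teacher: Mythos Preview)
First, a framing remark: the paper does not give its own proof of this lemma --- it is quoted verbatim from \cite{MR2254622} and used as a black box. So there is no ``paper's proof'' to compare against; I can only assess your argument on its own terms.

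Your easy inclusion is fine, and your chain of reformulations for the hard inclusion is correct and genuinely illuminating: rewriting ``$f$ preserves $\theta$'' as ``every transversal of the fibres has a bijective row'', and then rewriting ``some transversal has no bijective row'' as ``$f$ remains surjective on some sub-box $\prod_i(A\setminus\{m_i\})$'', are both valid and give a clean combinatorial target. You are also right that the hard direction needs $n\ge 3$; note that for $n=2$ the relation $\theta$ is just equality, preserved by every operation, so the lemma as stated is actually false there --- this is a wrinkle in the statement, not in your work.

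The genuine gap is your final paragraph. After all the reformulating, the content of the lemma has been concentrated into a single claim: \emph{if $f$ is surjective, $n\ge 3$, and for every $m\in A^k$ some fibre $F_c$ is contained in $\bigcup_i\{x_i=m_i\}$, then $f$ is essentially unary.} You acknowledge this is ``the real work'' and then offer only a hope that ``a counting and structural analysis'' will force every fibre into a single slice $\{x_{i^*}=p_c\}$. That conclusion is exactly what is needed (and, once obtained, the rest follows as you say), but you have not supplied any argument for it. The difficulty is real: the per-coordinate obstructions you produced earlier (``for each $i$ some fibre has $|\pi_i(F_c)|\ge 2$'') do not obviously assemble into a single $m$ that defeats all fibres simultaneously, and the covering condition $\bigcup_c M_c=A^k$ with $M_c=\{m:F_c\subseteq\bigcup_i\{x_i=m_i\}\}$ does not yield to a naive union bound. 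This is precisely where S\l upecki's theorem lives, and it needs an actual proof --- for instance an induction on $k$, or an analysis showing that some fibre must already lie in a single hyperplane --- not a promissory note.

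In short: the scaffolding is sound, but the load-bearing step is missing.
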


\begin{comment}
In this section we consider the problem of showing that cycles (of girth at least 4) are S\l upecki; we shall show with our technique that this holds for symmetric cycles, and cycles with no consecutive symmetric edges. Here is the strategy: by Lemma \ref{lemma-slupecki}, it suffices to show that every polymorphism of the cycle $\bbG$ preserves the S\l upecki relation. Working by contradiction, suppose there exists a counter-example $f$ of arity $k$; then $f$ is surjective (of course), and maps $k$ ``columns'' from the S\l upecki relation onto a column whose entries are all distinct. Since for a cycle, every proper subset is contained in a path, this means that the restriction of $f$ to the product of $k$  subpaths of $\bbG$ maps onto $\bbG$. If one can get a handle on such homomorphisms, perhaps it is possible to show that none can ``extend'' to a full polymorphism from $\bbG^k$ to $\bbG$. For instance, in the case of a symmetric cycle, or a cycle with no consecutive symmetric edges, it turns out that there are {\em no} homomorphisms from a product of subpaths onto $\bbG$. Preliminary computations hint that this behaviour might hold for other cycles, (except for the $n$-cycle with exactly $n-1$ symmetric edges.)
\end{comment}

 \begin{theorem} \label{lemma-paths} If a cycle satisfies the path condition then it is S\l upecki. \end{theorem}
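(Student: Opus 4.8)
The plan is to prove the contrapositive: assuming $\bbG$ is an $n$-cycle that is \emph{not} S\l upecki, I will construct paths $\bbQ_i$ of length $n-2$ embedding in $\bbG$ together with a surjective homomorphism $f:\prod_i \bbQ_i \twoheadrightarrow \bbG$, thereby showing $\bbG$ fails the path condition. So suppose $\bbG$ has a surjective polymorphism $f:\bbG^k \rightarrow \bbG$ that is not essentially unary. By Lemma~\ref{lemma-slupecki}, $f$ does not preserve the relation $\theta = \{(a_1,\dots,a_n): |\{a_1,\dots,a_n\}| < n\}$ on the $n$-element set $G$. (Here I use that $f$ is surjective but not essentially unary, so it is neither essentially unary nor non-surjective, hence by the lemma it cannot preserve $\theta$.)

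Unpacking this, there exist $n$-tuples $\mathbf{t}^j = (t^1_j,\dots,t^n_j) \in \theta$ for $j = 1,\dots,k$ such that the tuple $\bigl(f(t^1_1,\dots,t^1_k),\dots,f(t^n_1,\dots,t^n_k)\bigr)$ is \emph{not} in $\theta$, i.e. its $n$ entries are pairwise distinct, hence enumerate all of $G$. For each coordinate $j \in \{1,\dots,k\}$, the set $S_j = \{t^1_j,\dots,t^n_j\}$ has size at most $n-1$, so $S_j$ is a proper subset of the vertex set of the $n$-cycle $\bbG$. A proper subset of an $n$-cycle omits at least one vertex, and the induced subdigraph of $\bbG$ on $G \setminus \{v\}$ for any vertex $v$ is a path of length $n-2$; hence $S_j$ is contained in (the vertex set of) some induced subpath $\bbP_j$ of $\bbG$ of length $n-2$. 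Now let $\bbQ_j := \bbP_j$, viewed as an induced subdigraph of $\bbG$, so $\bbQ_j$ embeds in $\bbG$. Restricting $f$ to $\prod_j Q_j \subseteq G^k$ gives a homomorphism $g := f\!\restriction_{\prod_j \bbQ_j} : \prod_j \bbQ_j \rightarrow \bbG$ (a restriction of a homomorphism to an induced substructure of the domain is still a homomorphism). Finally, $g$ is surjective: for each $i \in \{1,\dots,n\}$, the tuple $(t^i_1,\dots,t^i_k)$ lies in $\prod_j Q_j$ (since $t^i_j \in S_j \subseteq Q_j$), and $g(t^i_1,\dots,t^i_k) = f(t^i_1,\dots,t^i_k)$; as $i$ ranges over $1,\dots,n$ these values are all of $G$. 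Thus $g : \prod_j \bbQ_j \twoheadrightarrow \bbG$ witnesses that $\bbG$ fails the path condition, completing the contrapositive.

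The argument is essentially routine bookkeeping once Lemma~\ref{lemma-slupecki} is in hand; the only point requiring a moment's care is the structural observation that every proper subset of the vertex set of an $n$-cycle is contained in a subset whose induced subdigraph is a path of length $n-2$ — this is immediate because deleting a single vertex $v$ from a reflexive $n$-cycle $\langle \{0,\dots,n-1\}; E\rangle$ leaves the induced digraph on the remaining $n-1$ vertices, which (relabelling) has exactly the arcs prescribed in the definition of a path of length $n-2$, and any smaller subset sits inside one of these. The main (minor) obstacle is purely one of verification: ensuring that the restriction of $f$ genuinely lands a surjection onto $\bbG$, which follows from tracking the distinguished tuples $(t^i_1,\dots,t^i_k)$ through the restriction as above.
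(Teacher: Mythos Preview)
Your proof is correct and follows essentially the same approach as the paper's own proof: both argue the contrapositive, invoke Lemma~\ref{lemma-slupecki} to obtain $k$ tuples in $\theta$ whose image under $f$ lies outside $\theta$, enclose each column in an induced subpath of length $n-2$, and observe that the restriction of $f$ to the product of these paths is onto. Your write-up is more explicit about verifying surjectivity of the restriction, but the argument is the same.
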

 
 \begin{proof} Suppose that the cycle $\bbG$ is not S\l upecki; by the previous lemma,  there exists a polymorphism $f:\bbG^k \rightarrow \bbG$  that does not preserve the S\l upecki relation $\theta$. Then there exist $k$ tuples in $\theta$ that $f$ maps to some tuple outside $\theta$. 
 For each tuple in $\theta$, there exists a proper subset of $G$ containing its coordinates; taking a largest one, it is a path of length $n-2$ (obtained from $\bbG$ by removing a single vertex.)  Since $f$ maps the $k$ columns to a tuple whose coordinates are all vertices of $\bbG$, in particular, the restriction of $f$ to the product of the $k$ paths is onto $\bbG$. 
 \end{proof}

Our next goal is to show that, if an $n$-cycle $\bbG$ does not satisfy the path condition, then there exists a homomorphism of a product of subpaths of $\bbG$ of length $n-2$ onto a subpath of $\bbG$ of length $n-1$ (Lemma \ref{lemma-onto-path}). This will allow us to reformulate the path condition is a purely combinatorial way (Theorem \ref{theorem-surj-path}). 

We require some basic facts about discrete homotopy of reflexive digraphs;  all details can be found in  \cite{MR2101222}, see also \cite{MR2232298}.  The {\em one-way infinite fence} $\bbF$ is the following (infinite) reflexive digraph: its base set is the set of non-negative integers, and $(n,m)$ is an arc if $ n = m$ or $n$ is even and $|n-m| = 1$. Let $\bbG$ be a digraph and $u,v \in G$. We say that $p$ is  {\em a path in $\bbG$ from $u$ to $v$} if $p$ a homomorphism from $\bbF$ to $\bbG$ such that there exists an $N \geq 0$ with $f(0)=u$ and $f(t)=v$ for all $t \geq N$. This corresponds in a straightforward way to the intuitive notion of a path in $\bbG$ from $u$ to $v$\footnote{These paths in $\bbG$ are {\em not} directed paths, but rather sequences of vertices that are paths in the underlying undirected reflexive graph.}; in particular, if $f$ is path in $\bbG$ from $u$ to $v$ and $g$ is a path in $\bbG$ from $v$ to $w$, then we can define a path $fg$ from $u$ to $w$ in the natural way, and we can also define a path $p^{-1}$ in $\bbG$ from $v$ to $u$ (see  \cite{MR2101222}). If $p$ is a path in $\bbG$ from $u$ to $v$, and $f:\bbG \rightarrow \bbH$ is a homomorphism, then clearly the composition of $f$ with $p$ is path in $\bbH$ from $f(u)$ to $f(v)$, which we denote by $f(p)$.  It is immediate to verify that $f(pq) = f(p)f(q)$,  $f(p^{-1})=f(p)^{-1}$ and $(pq)^{-1} = q^{-1}p^{-1}$. 

Let $a \in \bbG$. Then  $\bbF(\bbG,a)$ denotes the (infinite) digraph whose vertices are all paths in $\bbG$ from $a$ to $a$,  where $(f,g)$  is an arc if, whenever $(s,t)$ is an arc of $\bbF$ then $(f(s),g(t))$ is an arc of $\bbG$.  Two paths in $\bbF(\bbG,a)$ are {\em homotopic} if they are in the same connected component. It turns out that the composition of paths induces a group structure on the connected components of $\bbF(\bbG,a)$, and we denote this group by $\sigma_1(\bbG,a)$. If $\bbG$ is connected, the choice of $a$ is immaterial; and it can be shown that $\sigma_1(\bbG)$ is isomorphic to the fundamental group of the triangulation of the following simplicial complex associated to $\bbG$: its vertices are the vertices of $\bbG$, and its simplices are  the subsets of $G$ that are the homomorphic image of some totally ordered set (i.e. a chain) in $\bbG$. Notice in particular that $\sigma_1(\bbG)$ is trivial whenever $\bbG$ is a product of paths. %, and is isomorphic to $\bbZ$ if $\bbG$ is a cycle of girth at least 4. 

Our strategy to prove Lemma \ref{lemma-onto-path} is simple. Given an $n$-cycle $\bbG$,  we construct a (two-way infinite) path  $\widehat{\bbG}$ which admits a homomorphism $\pi$ onto $\bbG$, such that, for any homomorphism $f:\bbK \rightarrow \bbG$ where $\bbK$ is connected and has trivial fundamental group,  there exists a homomorphism $\widehat{f}:\bbK \rightarrow \widehat{\bbG}$ such that $\pi \circ \widehat{f} = f$: the path $\widehat{\bbG}$ is the natural analog of the universal covering space of the cycle $\bbG$. The homomorphism $f$ will be onto precisely when the image of $\widehat{f}$ has size at least $n-1$; hence, there exists a homomorphism from $\bbK$ onto $\bbG$ if and only if there is one from $\bbK$ onto some subpath of $\bbG$ of length $n-1$. \\ %(since the path $\widehat{\bbG}$ can be mapped onto that subpath). 

 Let $\bbG$ be an $n$-cycle and let $a \in G$.  Let $p$ be a path from $u$ to $v$ in $\bbG$.  Define $\alpha(p)$ to be the number of pairs $(t,t+1)$ in $\bbF$ such that $(f(t),f(t+1))=(a-1,a)$ minus the number of pairs $(t,t+1)$ in $\bbF$ such that $(f(t),f(t+1))=(a,a-1)$. Intuitively, this corresponds to  the ``winding number'' of the path $p$.

 Our first goal is to show that $\alpha$ is invariant under homotopy.
To prove this, we will use the notion of non-refinable edge from \cite{DBLP:journals/dm/MarotiZ12}. We simply rephrase Mar\'oti and Z\'adori's Lemma 2.10 for our purposes (the proof they give is still valid in the more general context of $Hom(\bbA,\bbG)$ even though they formulate it only in the case of $Hom(\bbG,\bbG)$; see section \ref{section-five} for a definition of these digraphs.)
 
Let $(p,q)$ be an edge of $\bbF(\bbG,a)$.  Let $Neq(p,q)$ denote the (finite) set of all $t \in F$ such that $p(t) \neq q(t)$. The edge $(p,q)$ is {\em non-refinable} if there is no non-empty proper subset $A$ of $Neq(p,q)$ such that changing the value of $q$ to that of $p$ on $A$ yields an element of $\bbF(\bbG,a)$. 
 Call an edge $(u,v)$ of $\bbF$ {\em critical for $(p,q)$} if $(q(u),p(v))$ is not an edge of $\bbG$. 
 
 \begin{lemma}[Based on Lemma 2.10 of  \cite{DBLP:journals/dm/MarotiZ12}] Let $\bbG$ be a digraph and let $a \in G$. \begin{enumerate}
 \item If $p,q \in \bbF(\bbG,a)$ are homotopic then there exists a path of non-refinable edges from $p$ to $q$;
 \item An edge $(p,q)$ is non-refinable if and only if $|Neq(p,q)| \leq 1$. 
 \end{enumerate}
 \end{lemma}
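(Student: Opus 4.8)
The plan is to verify the two statements essentially as in \cite{DBLP:journals/dm/MarotiZ12}, adapting the argument from $\mathrm{Hom}(\bbG,\bbG)$ to $\bbF(\bbG,a)$; the key point is that both $\bbF$ and $\bbG$ are reflexive, so ``mixing'' the values of $p$ and $q$ coordinatewise is harmless on the diagonal.

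For statement (2), I would argue both directions. If $|Neq(p,q)|\le 1$ there is no non-empty \emph{proper} subset $A$ of $Neq(p,q)$ at all, so $(p,q)$ is vacuously non-refinable. Conversely, suppose $|Neq(p,q)|\ge 2$; pick any non-empty proper subset $A\subsetneq Neq(p,q)$ and let $r$ be the map agreeing with $p$ on $A$ and with $q$ off $A$. I must check $(p,r)$ is an edge of $\bbF(\bbG,a)$, i.e. that for every arc $(s,t)$ of $\bbF$ the pair $(p(s),r(t))$ is an arc of $\bbG$. If $t\in A$ then $r(t)=p(t)$ and $(p(s),p(t))$ is an arc since $p$ is a homomorphism; if $t\notin A$ then $r(t)=q(t)$ and $(p(s),q(t))$ is an arc because $(p,q)$ is an edge of $\bbF(\bbG,a)$. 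So $(p,r)\in\bbF(\bbG,a)$, and by the same reasoning (swapping roles, using that $\bbF(\bbG,a)$ is a reflexive digraph and arcs are oriented by the $\bbF$-arc direction) one checks $r\in\bbF(\bbG,a)$ and that $A$ witnesses refinability. Hence $(p,q)$ is refinable, proving the contrapositive. One should be slightly careful about the orientation convention in the definition of the arc relation of $\bbF(\bbG,a)$ — the target coordinate is the one whose value is being read from $q$ — but this is exactly why the one-sided modification above works.

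For statement (1), the strategy is: since $p$ and $q$ are homotopic, there is a walk $p=r_0, r_1,\dots,r_m=q$ in $\bbF(\bbG,a)$ (using reflexivity to orient each step as a genuine edge in either direction). It suffices to show that any single edge $(r_{i},r_{i+1})$ can be replaced by a path of non-refinable edges between the same endpoints. Given an edge $(u,v)$, induct on $|Neq(u,v)|$: if it is $\le 1$ the edge is already non-refinable by (2); otherwise choose a non-empty proper $A\subsetneq Neq(u,v)$ witnessing refinability, let $w$ agree with $u$ on $A$ and with $v$ elsewhere, and observe that $(u,w)$ and $(w,v)$ are both edges of $\bbF(\bbG,a)$ with $Neq(u,w), Neq(w,v)$ strictly smaller than $Neq(u,v)$ (they partition it). By induction each of $(u,w)$, $(w,v)$ refines to a path of non-refinable edges, and concatenating gives the desired path from $u$ to $v$.

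The main obstacle is purely bookkeeping: making sure the coordinatewise modifications preserve membership in $\bbF(\bbG,a)$ with the correct orientation of the arc relation (target vs.\ source coordinate), and confirming that the proof Maróti and Zádori give for $\mathrm{Hom}(\bbG,\bbG)$ uses nothing specific to that case beyond reflexivity of the structures involved — which holds here since $\bbF$ is reflexive and, under our blanket assumption, so is $\bbG$. No genuinely new idea is needed; one merely transcribes their argument with $\bbF$ in place of the source digraph.
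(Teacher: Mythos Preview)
Your argument for part~(1) is essentially the paper's: refine an edge $(u,v)$ by induction on $|Neq(u,v)|$, using refinability to produce an intermediate $w$ with strictly smaller $Neq$ sets on each side. That is fine.

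Part~(2), however, has a genuine gap. You claim that \emph{any} non-empty proper $A\subsetneq Neq(p,q)$ yields a map $r\in\bbF(\bbG,a)$, but the verification you give only checks that $(p(s),r(t))$ is an arc for every $\bbF$-arc $(s,t)$ --- that is, it checks the edge condition for $(p,r)$, not that $r$ itself is a homomorphism. For $r\in\bbF(\bbG,a)$ you need $(r(s),r(t))$ to be an arc of $\bbG$ for every $\bbF$-arc $(s,t)$. In the case $s\in Neq(p,q)\setminus A$ and $t\in A$ this becomes $(q(s),p(t))$, which is \emph{not} guaranteed by the hypothesis that $(p,q)$ is an edge (that only gives $(p(s),q(t))$). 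So an arbitrary $A$ can fail; ``by the same reasoning'' does not apply here, and no amount of role-swapping fixes it, since the edge relation on $\bbF(\bbG,a)$ is genuinely asymmetric.

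This is exactly the obstruction the paper addresses. It calls an $\bbF$-arc $(u,v)$ \emph{critical} for $(p,q)$ when $(q(u),p(v))$ is not an arc of $\bbG$, and then chooses $A$ carefully: one forms the digraph on $Neq(p,q)$ whose arcs are the critical edges, observes that it is acyclic (being a subgraph of $\bbF$), and takes $A$ to be a terminal strong component --- i.e.\ one with no critical edge entering it from the outside. That choice of $A$ is precisely what makes the problematic case $s\in Neq(p,q)\setminus A$, $t\in A$ harmless. Your proof needs this (or an equivalent device) to go through.
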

 
 \begin{proof}  (1) It clearly suffices to prove the result when $p\rightarrow q$. If $(p,q)$ is non-refinable, we are done. Otherwise, there exists $r \in  \bbF(\bbG,a)$ obtained by changing the value of $q$ on certain, but not all, values in $Neq(p,q)$ to that of $p$. Notice in particular that $r(t) \in \{p(t),q(t)\}$ for all $t$ and hence  $p \rightarrow r \rightarrow q$; secondly, it is clear that $Neq(r,p)$ and $Neq(r,q)$ are proper non-empty subsets  of $Neq(p,q)$. By repeating the argument, this process must terminate and we obtain the desired path. \\
 
 (2)  Necessity is immediate. Now suppose that $|Neq(p,q)| >1$. Consider the digraph $\bbD$ whose set of vertices is $V=Neq(p,q)$ and whose edges are the edges critical for $(p,q)$. Since this digraph is not strongly connected (it is a subgraph of $\bbF$ which is acyclic), we can find  a non-empty, non-trivial strong component $A$ of $\bbD$ such that no critical edge goes from $V \setminus A$ to $A$; we can thus change the value of $q$ on $A$ to that of $p$ to obtain an $r \in \bbF(\bbG,a)$; $r$ is a homomorphism by definition of critical edge and choice of $A$, and $r$ satisfies the ``boundary'' requirements of $\bbF(\bbG,a)$ because  $0 \not\in Neq(p,q)$  so $r(0)=a$; and $p(t)=q(t)=a$  for all $t \geq N$ for some $N$, so the same holds for $r$.     \end{proof}
 
  \begin{lemma} \label{lemma-invariant}Let $p,q  \in \bbF(\bbG,a)$. If $p$ and $q$ are homotopic then $\alpha(p) = \alpha(q)$. \end{lemma}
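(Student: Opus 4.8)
The plan is to reduce to the case of a single non-refinable edge and then analyze what happens to the winding number $\alpha$ along such an edge. By part (1) of the preceding lemma, if $p$ and $q$ are homotopic, there is a path of non-refinable edges joining them in $\bbF(\bbG,a)$; so it suffices to show $\alpha(p)=\alpha(q)$ whenever $(p,q)$ is a non-refinable edge. By part (2) of that lemma, a non-refinable edge satisfies $|Neq(p,q)|\le 1$. If $Neq(p,q)=\emptyset$ then $p=q$ and there is nothing to prove, so I may assume $Neq(p,q)=\{t_0\}$ for a single index $t_0$, and moreover $t_0\ge 1$ since $p(0)=q(0)=a$ (and $t_0$ cannot be in the eventually-constant tail either).

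The heart of the argument is then a local computation at $t_0$. The value of $\alpha$ is a sum of contributions over consecutive pairs $(t,t+1)$ in $\bbF$; changing $q$ at the single vertex $t_0$ can only affect the pairs involving $t_0$, namely $(t_0-1,t_0)$ and $(t_0,t_0+1)$. I would distinguish cases according to whether $t_0$ is even or odd in $\bbF$: if $t_0$ is odd it has no neighbours other than itself in $\bbF$, so in fact the pairs $(t_0-1,t_0)$ and $(t_0,t_0+1)$ are non-edges of $\bbF$ and do not contribute to $\alpha$ at all, giving $\alpha(p)=\alpha(q)$ trivially; if $t_0$ is even, then $t_0-1$ and $t_0+1$ are its $\bbF$-neighbours, and one checks directly that the edge-condition defining $\bbF(\bbG,a)$ forces $p(t_0-1)=q(t_0-1)$ and $p(t_0+1)=q(t_0+1)$ to be a common value $w$ adjacent to both $p(t_0)$ and $q(t_0)$ in $\bbG$. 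Now the two contributions at $t_0$ to $\alpha(p)$ come from the pairs $(p(t_0-1),p(t_0))=(w,p(t_0))$ and $(p(t_0),p(t_0+1))=(p(t_0),w)$; these two ordered pairs are reverses of each other, so one contributes $+1$ to $\alpha$ exactly when the other contributes $-1$, and their net contribution is $0$ regardless of the value $p(t_0)$. The same holds for $q$. Hence the change from $p$ to $q$ leaves $\alpha$ unchanged.

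The step I expect to require the most care is the bookkeeping in the even case: one must be precise about which ordered arcs $(a-1,a)$ and $(a,a-1)$ actually occur among $\{(w,p(t_0)),(p(t_0),w)\}$, using that $\bbG$ is an $n$-cycle (so the neighbourhood structure around $a$ is controlled), and confirm that in every subcase the two pairs contribute opposite amounts — including the degenerate possibilities where $p(t_0)=w$, or where neither pair is one of the two distinguished arcs at $a$. Once this local cancellation is verified, invariance of $\alpha$ under a single non-refinable edge follows, and then invariance under homotopy follows by concatenating along the path of non-refinable edges supplied by the previous lemma.
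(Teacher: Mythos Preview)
Your reduction via the preceding lemma to a single non-refinable edge with $|Neq(p,q)|=1$ is correct and matches the paper. The local analysis that follows, however, contains two genuine errors.

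First, your treatment of odd $t_0$ misreads $\bbF$. An odd vertex $t_0$ has $t_0-1$ and $t_0+1$ as neighbours (via the incoming arcs $(t_0-1,t_0)$ and $(t_0+1,t_0)$); in particular the consecutive pair $(t_0-1,t_0)$ is an arc of $\bbF$ and certainly contributes to $\alpha$. The parity of $t_0$ does not let you dismiss these contributions.

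Second, and more seriously, in your even case you assert that $p(t_0-1)=q(t_0-1)$ and $p(t_0+1)=q(t_0+1)$ are \emph{a single} common value $w$. It is of course true that $p$ and $q$ agree at $t_0-1$ and at $t_0+1$ (they differ only at $t_0$), but there is no reason whatsoever for $p(t_0-1)$ to equal $p(t_0+1)$. Your cancellation argument --- that $(w,p(t_0))$ and $(p(t_0),w)$ are reverses and hence contribute zero net --- rests entirely on this false equality. Indeed, taken at face value your argument would show that the local contribution to $\alpha$ at every vertex is zero, i.e.\ $\alpha\equiv 0$, which is absurd since $\alpha$ is a nontrivial winding number.

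The paper's proof handles the local step differently. It lets $e=(u,v)$ be an edge of $\bbF$ on which exactly one of $p,q$ lands in $\{a-1,a\}$, and then uses the cycle structure of $\bbG$ (the common out-neighbours of two adjacent vertices lie in that edge) together with the edge condition $p\rightarrow q$ in $\bbF(\bbG,a)$ to force the value at the \emph{other} neighbour $w$ of $v$ (respectively $w'$ of $u$) into $\{c,d\}=\{a-1,a\}$. A short case check then shows that whatever is gained or lost at $(u,v)$ is exactly compensated at the adjacent pair $(v,w)$ (respectively $(w',u)$). The ingredient missing from your argument is precisely this use of the cycle geometry to locate and track a \emph{second} crossing, rather than assuming the two neighbouring values coincide.
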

  
  \begin{proof} By the last lemma, it suffices to prove that, if $p \rightarrow q$ in  $\bbF(\bbG,a)$ and $p$ and $q$ differ in exactly one place, then $\alpha(p) = \alpha(q)$.  Let $\{c,d\}=\{a-1,a\}$. The result is clear if the edges of $\bbF$ mapped by $p$ and $q$ onto $\{c,d\}$ are the same. 
  Hence we may assume there exists a unique edge $e$ of $\bbF$ that one of $p,q$ maps onto $\{c,d\}$ but not the other.  Let $e=(u,v)$ and suppose that $(p(u),p(v))=(c,d)$ (all the other cases are identical.)   Then $p(u) \rightarrow q(v)$ and $p(v)\rightarrow q(v)$ implies that $q(v) \in \{c,d\}$ (keeping in mind that $\bbG$ is a cycle).
 Suppose first that  $q(v) = c$. Then $q(u)=p(u)=c$. By definition of $\bbF$, there exists $w$ a neighbour of $v$ distinct from $u$; by the same transitivity argument as above we get that $p(w) \in \{c,d\}$, and of course $q(w)=p(w)$. If $p(w)=d$ then both $p$ and $q$ traverse $(c,d)$ exactly once on $\{u,v,w\}$, and if $p(w)=c$ then $p$ traverses both $(c,d)$ and $(d,c)$ once while $q$ traverses neither; in both cases we get that $\alpha(p)=\alpha(q)$. Suppose now that  $q(v)=d$. Then $c=p(u) \rightarrow q(u) \rightarrow q(v)=d$ implies that  $q(u) \in \{c,d\}$, hence by hypothesis $q(u)=d$. Repeating the argument above using the neighbour $w'$ of $u$ distinct from $v$ (notice that $u \neq 0$ since one of $p(u)$ or $q(u)$ is different from $a$)  will complete the proof. \end{proof}

% Let $\bbG$ be an $n$-cycle; we define a new digraph, which is a two-way infinite path, by ``unravelling'' $\bbG$, i.e.
 
 \begin{definition}   Let $\bbG$ be an $n$-cycle, and let $a \in G$. The {\em universal covering space $\widehat{\bbG}$ of $\bbG$} is the  (infinite) reflexive digraph with vertex set $G \times \bbZ$, and whose arcs are all pairs $((x,i),(y,i))$ such that $(x,y) \in E(\bbG)$ and $\{x,y\} \neq \{a-1,a\}$, together with the arcs $((a-1,i),(a,i+1))$ if $(a-1,a) \in E(\bbG)$ and  $((a,i+1),(a-1,i))$ if $(a,a-1) \in E(\bbG)$, and this for all $i \in \bbZ$.  \end{definition}
 
 Notice that, in the previous definition, the choice of $a$ is immaterial, i.e. the resulting digraphs will all be isomorphic. We chose this way of defining the universal covering space to make the following proof easier.
 It is clear by construction that the map $\pi:\widehat{\bbG} \rightarrow \bbG$ defined by $\pi(x,i)=x$ is a homomorphism onto $\bbG$.

 \begin{lemma} \label{lemma-lift} Let $\bbK$ be a  connected digraph with $\sigma_1(\bbK) = 0$, and let $f:\bbK\rightarrow \bbG$. Then there exists a map $\widehat{f}:\bbK \rightarrow \widehat{\bbG}$ such that $\pi \circ \widehat{f} = f$. \end{lemma}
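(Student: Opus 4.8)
The plan is to treat $\pi:\widehat\bbG\to\bbG$ as a covering map in the graph-theoretic sense and run the classical lifting argument, with the winding number $\alpha$ playing the role of the monodromy.

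First I would record the key local property of $\pi$: for every vertex $\widehat x$ of $\widehat\bbG$, $\pi$ restricts to a bijection from the set of out-neighbours of $\widehat x$ onto the set of out-neighbours of $\pi(\widehat x)$, and likewise for in-neighbours. This is immediate from the definition of $\widehat\bbG$: from a given vertex $(x,i)$ each non-loop neighbour of $x$ in the cycle $\bbG$ has exactly one lift, the $\bbZ$-coordinate being unchanged except across the arc between $a-1$ and $a$, where it shifts by $\pm1$. From this one gets the \emph{unique path lifting property}: for any path $p$ in $\bbG$ (a homomorphism $\bbF\to\bbG$, eventually constant) and any $\widehat v_0\in\pi^{-1}(p(0))$ there is a unique homomorphism $\widehat p:\bbF\to\widehat\bbG$ with $\widehat p(0)=\widehat v_0$ and $\pi\circ\widehat p=p$, built by induction on $t$; since $p$ is eventually constant so is $\widehat p$, hence $\widehat p$ is again a path in $\widehat\bbG$. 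Tracking the $\bbZ$-coordinate through this construction yields the crucial identity: if $p$ is a path from $u$ to $v$, then the lift $\widehat p$ starting at $(u,i_0)$ ends at $(v,\,i_0+\alpha(p))$, since the $\bbZ$-coordinate increases by $1$ exactly when $p$ traverses the arc $(a-1,a)$ and decreases by $1$ exactly when it traverses $(a,a-1)$, which is precisely what $\alpha(p)$ counts.

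Next I would define $\widehat f$. Fix $k_0\in K$ and set $\widehat v_0=(f(k_0),0)$. For $k\in K$ choose a path $p$ in $\bbK$ from $k_0$ to $k$ (it exists since $\bbK$ is connected), let $\widehat{f(p)}$ be the lift of $f\circ p$ starting at $\widehat v_0$, and let $\widehat f(k)$ be its endpoint; by the identity above $\widehat f(k)=(f(k),\alpha(f\circ p))$, so $\pi\circ\widehat f=f$ once $\widehat f$ is shown well defined. Well-definedness is where $\sigma_1(\bbK)=0$ enters: if $p,q$ are two paths from $k_0$ to $k$, then $r=pq^{-1}$ is a loop at $k_0$, hence homotopic to the constant loop in $\bbF(\bbK,k_0)$; since composition with $f$ sends edges of $\bbF(\bbK,k_0)$ to edges of $\bbF(\bbG,f(k_0))$ and the constant loop to the constant loop, $f\circ r$ is homotopic to the constant loop in $\bbF(\bbG,f(k_0))$, so $\alpha(f\circ r)=0$ by Lemma \ref{lemma-invariant} (whose proof goes through with $f(k_0)$ in place of the base vertex $a$). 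As $\alpha$ is additive under concatenation and changes sign under reversal, $\alpha(f\circ p)-\alpha(f\circ q)=\alpha(f\circ r)=0$, so the endpoint is independent of the chosen path.

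Finally I would check that $\widehat f$ is a homomorphism: given an arc $(k,k')$ of $\bbK$, take a path $p$ from $k_0$ to $k$ and extend it by this arc to a path $p'$ from $k_0$ to $k'$; then $f\circ p'$ is $f\circ p$ followed by the single arc from $f(k)$ to $f(k')$, and lifting that last arc from $\widehat f(k)$ produces (since the lift is a homomorphism $\bbF\to\widehat\bbG$) a vertex $\widehat w$ with $(\widehat f(k),\widehat w)$ an arc of $\widehat\bbG$, and by well-definedness $\widehat w=\widehat f(k')$. The main obstacle is exactly this well-definedness step: the rest is routine covering-space bookkeeping, but there one must convert $\sigma_1(\bbK)=0$ into the vanishing of the winding number of $f\circ(pq^{-1})$, which relies on Lemma \ref{lemma-invariant} together with the fact that composition with a homomorphism respects the homotopy relation on loop spaces; a minor secondary point is to confirm that Lemma \ref{lemma-invariant} applies with the correct base vertex, which one settles by inspecting its (purely local) proof.
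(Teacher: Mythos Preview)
Your proposal is correct and follows essentially the same line as the paper's proof: define $\widehat f(k)=(f(k),\alpha(f\circ p))$ for any path $p$ from a fixed base vertex to $k$, use $\sigma_1(\bbK)=0$ together with Lemma~\ref{lemma-invariant} to see this is well defined, and then check the homomorphism property by extending a path by a single arc. The only differences are cosmetic: you dress the argument in covering-space language (local bijection on neighbours, unique path lifting) and derive the formula for the endpoint as a consequence, whereas the paper simply writes down $\widehat f(v)=(f(v),\alpha(f(p)))$ and verifies directly via a short case analysis that it is an arc-preserving map; also, the paper avoids your ``minor secondary point'' about the base vertex in Lemma~\ref{lemma-invariant} by choosing the base $u\in K$ so that $f(u)=a$ (equivalently, by taking $a:=f(u)$, which is harmless since the choice of $a$ in $\widehat\bbG$ is immaterial).
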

 
 \begin{proof}
 Choose a vertex $u \in K$ and let $f(u)=a$.  Let $v \in K$.     \\
 
 \noindent{\bf Claim.} {\em If $p$ and $q$ are any paths from $u$ to $v$ in $\bbK$, then $\alpha(f(p)) = \alpha(f(q))$. }  \\
 
 \noindent{\em Proof of Claim.} Consider the discrete homotopy groups $\sigma_1(\bbK,u)$ and $\sigma_1(\bbG,a)$; $f$ induces a group homomorphism from the first to the second. 
 Since $\sigma_1(\bbK,u)$ is trivial, for any two paths $p,q$ from $u$ to $v$ we have that $pq^{-1}$ is homotopic to the constant path at $u$. Applying $f$, we obtain that $f(pq^{-1})$ is homotopic to the constant path at $a$; in particular, since $\alpha$ is preserved under homotopy by Lemma \ref{lemma-invariant}, $0 = \alpha(f(pq^{-1})) = \alpha(f(p)f(q^{-1}))=\alpha(f(p)f(q)^{-1})$. 
 Since clearly $\alpha(f(p)f(q)^{-1})=\alpha(f(p)) - \alpha(f(q))$ the claim follows.\\
 
 By the claim, if we set $\widehat{f}(v) = (f(v), \alpha(f(p)))$ for any path $p$ from $u$ to $v$, this function is well-defined; clearly $\pi \circ \widehat{f} = f$, so it remains to prove $\widehat{f}$ is a homomorphism.  This is in fact straightforward: let $v \rightarrow w$ be an arc in $\bbK$. Let $p$ be any path from $u$ to $v$, and let $p'$ be the path from $u$ to $w$ which is $p$ followed by the arc $(v,w)$. Let $\alpha = \alpha(f(p))$ and let $\alpha' = \alpha(f(p'))$. Then $\widehat{f}(v) = (f(v),\alpha)$ and $\widehat{f}(w) = (f(w),\alpha')$. Notice that $\alpha \neq \alpha'$ precisely when $\{f(v),f(w)\}=\{a-1,a\}$. Thus if $\alpha = \alpha'$ we are done. Otherwise, we have that  either $(f(v),f(w)) = (a-1,a)$ and then $\alpha' = \alpha +1$ or  $(f(v),f(w)) = (a,a-1)$ and then $\alpha' = \alpha -1$; in either case $(\widehat{f}(v),\widehat{f}(w))$ is an arc. 
 
 \end{proof}
 
 The next result shows that the path condition  can be reformulated as a problem of determining if there exists a homomorphism from products of subpaths of $\bbG$ onto a subpath of $\bbG$ rather than $\bbG$ itself;  we then describe a simple combinatorial criterion to determine if such a homomorphism exists (Theorem \ref{theorem-surj-path}).

 %\bnote{I think it might be useful to reformulate the next result in terms of a single operation at a time, i.e. just like the proof (1) implies (2) actually does. In fact, we might generalise the statement, the paths Pi %can be any paths I think. }
 
 \begin{lemma} \label{lemma-onto-path} Let $\bbG$ be an $n$-cycle. Then the following are equivalent:
  \begin{enumerate}
     \item  $\bbG$ fails the path condition;
     \item there exist paths $\bbQ_i$ of length $n-2$ that embed in $\bbG$, a path $\bbP$ of length $n-1$ that embeds in $\bbG$ and a surjective homomorphism $f:\prod_i \bbQ_i \rightarrow \bbP$. 
     \end{enumerate}
\end{lemma}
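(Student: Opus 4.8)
The implication $(2)\Rightarrow(1)$ is immediate. If $\bbP$ is a path of length $n-1$ occurring as a subdigraph of $\bbG$ via an embedding $\iota\colon\bbP\hookrightarrow\bbG$, then $\bbP$ has $n$ vertices, so $\iota$ is a bijection onto $G$; hence $\iota\circ f\colon\prod_i\bbQ_i\to\bbG$ is a surjective homomorphism and $\bbG$ fails the path condition.

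For $(1)\Rightarrow(2)$ I would argue through the universal covering space. Suppose $\bbG$ fails the path condition, witnessed by paths $\bbQ_i$ of length $n-2$ embedding in $\bbG$ and a surjective homomorphism $f\colon\bbK\to\bbG$, where $\bbK=\prod_i\bbQ_i$ (note $\bbK$ is finite, nonempty, connected, and a product of reflexive paths, so $\sigma_1(\bbK)=0$). By Lemma~\ref{lemma-lift} there is a lift $\widehat f\colon\bbK\to\widehat\bbG$ with $\pi\circ\widehat f=f$. Since $\widehat\bbG$ is a two-way infinite path and $\bbK$ is finite and connected, the image $\widehat\bbP:=\widehat f(\bbK)$ is a finite subpath of $\widehat\bbG$; and $\pi(\widehat\bbP)=(\pi\circ\widehat f)(\bbK)=f(\bbK)=G$, so $\widehat\bbP$ has at least $n$ vertices.

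It remains to trim $\widehat\bbP$ down to a subpath of length exactly $n-1$ without destroying surjectivity. The sequence of $\pi$-values encountered along $\widehat\bbG$ is periodic with period $n$ and runs through every vertex of $\bbG$ in each period, so $\pi$ is injective on any $n$ consecutive vertices of $\widehat\bbG$, and the subpath they induce is carried by $\pi$ isomorphically onto the digraph obtained from $\bbG$ by deleting a single edge --- a path of length $n-1$ occurring as a subdigraph of $\bbG$. Fix such a subpath $\bbP$ inside $\widehat\bbP$, which is possible since $|\widehat\bbP|\geq n$. Because every vertex of $\widehat\bbG$ carries a loop, the map $\rho\colon\widehat\bbP\to\bbP$ that is the identity on $\bbP$ and sends each of the (at most two) remaining tails of $\widehat\bbP$ to the nearer endpoint of $\bbP$ is a homomorphism, and it fixes $\bbP$ pointwise. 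Then $\rho\circ\widehat f\colon\bbK\to\bbP$ is a homomorphism which is onto $\bbP$ (since $\widehat f$ is onto $\widehat\bbP\supseteq\bbP$ and $\rho$ is the identity on $\bbP$), and together with the paths $\bbQ_i$ this yields $(2)$.

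Given Lemma~\ref{lemma-lift}, there is no deep obstacle here: all the homotopy-theoretic content has already been packaged into that lemma. The step needing the most care is the trimming --- checking that a connected subdigraph of a path-digraph is again a subpath, that $\pi$ is genuinely injective on $n$ consecutive vertices of $\widehat\bbG$ (so that $\bbP$ really has length $n-1$ and sits in $\bbG$ as the cycle with one edge removed), and that collapsing a tail onto a single vertex is a homomorphism, which is exactly where reflexivity of the digraphs is used.
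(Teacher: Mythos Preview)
Your proposal is correct and follows essentially the same approach as the paper: for $(1)\Rightarrow(2)$ you lift via Lemma~\ref{lemma-lift}, observe that the image in $\widehat{\bbG}$ is a finite subpath of length at least $n-1$, and then retract onto a subpath of length exactly $n-1$, which by construction of $\widehat{\bbG}$ is (isomorphic to) the cycle with one edge removed. The only difference is that you spell out a few details the paper leaves implicit --- why $\sigma_1(\bbK)=0$, why $\pi$ is injective on any $n$ consecutive vertices of $\widehat{\bbG}$, and why reflexivity makes the tail-collapsing map $\rho$ a homomorphism --- but the core argument is identical.
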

 
 \begin{proof}
 $(2) \Rightarrow (1)$ is immediate. $(1) \Rightarrow (2)$: suppose there exists a surjective homomorphism $f:\bbK\rightarrow \bbG$ where $\bbK$ is a product of subpaths of $\bbG$ of length $n-2$. By Lemma \ref{lemma-lift}, we have a homomorphism $\widehat{f}:\bbK \rightarrow \widehat{\bbG}$ such that $\pi \circ \widehat{f} = f$. In particular, the image of $\widehat{f}$ is connected, and hence is a path, and has length at least $n-1$. It now suffices to observe that any path can be retracted to a subpath sending the endpoints of the longer path to the endpoints of the shorter one; the path of length $n-1$ is a subpath of $\bbG$ by construction of $\widehat{\bbG}$. 
 \end{proof}

 \begin{theorem} \label{theorem-surj-path} Let $\bbG$ be an $n$-cycle, and let $\bbQ_1,\dots,\bbQ_l$ denote its induced subpaths of length $n-2$.  The following are equivalent:
 \begin{enumerate}
 \item $\bbG$ satisfies the path condition;
 \item for every induced subpath $\bbP$ of $\bbG$ of length $n-1$, there exists a word $W$  such that $W \geq \bbQ_i$ for all $i$ but $W \not\geq \bbP$.

 \end{enumerate} \end{theorem}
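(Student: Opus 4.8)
The plan is to reduce the path condition, via Lemma~\ref{lemma-onto-path}, to a statement about surjective homomorphisms onto a \emph{single} path, and then to show that such a surjection exists precisely when $\le$ behaves as in $(2)$. (Here and below ``$W\ge\bbP$'' is read as the existence of a surjective homomorphism $P(W)\to\bbP$ carrying the two endpoints of $P(W)$ to the two endpoints of $\bbP$, in some order, and similarly for ``$W\ge\bbQ_i$''.)

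By Lemma~\ref{lemma-onto-path}, $\bbG$ fails the path condition iff there is an induced subpath $\bbP$ of $\bbG$ of length $n-1$ together with a surjective homomorphism $g\colon\bbK\twoheadrightarrow\bbP$, where $\bbK$ is a product of copies of the $\bbQ_i$. Hence it is enough to prove two implications: \textbf{(b)} if such a $g\colon\bbK\twoheadrightarrow\bbP_0$ exists, then every word $W$ with $W\ge\bbQ_i$ for all $i$ satisfies $W\ge\bbP_0$; and \textbf{(a)} if some induced subpath $\bbP$ of length $n-1$ has the property that every word $W$ with $W\ge\bbQ_i$ for all $i$ satisfies $W\ge\bbP$, then $\bbG$ fails the path condition. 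Indeed, (b) is the contrapositive of $(2)\Rightarrow(1)$ (take the $\bbP$ in $(2)$ to be $\bbP_0$), and (a) is the contrapositive of $(1)\Rightarrow(2)$.

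For (b), let $W\ge\bbQ_i$ for all $i$. Each factor of $\bbK$ is a copy of some $\bbQ_i$, so by Lemma~\ref{lemma-utilities}(1) there are monotone endpoint-preserving surjections of $P(W)$ onto the factors; bundling them gives a homomorphism $P(W)\to\bbK$ sending the two endpoints of $P(W)$ to the ``corner'' vertices $\mathbf 0=(0,\dots,0)$ and $\mathbf e$ of $\bbK$. Composing with $g$ gives a homomorphism $P(W)\to\bbP_0$ whose image is the subpath of $\bbP_0$ joining $g(\mathbf 0)$ and $g(\mathbf e)$, so it remains only to arrange that these two vertices are the endpoints of $\bbP_0$: the composite is then an endpoint-preserving surjection onto $\bbP_0$, i.e.\ $W\ge\bbP_0$. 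This last point is a normalisation of $g$ that must be argued separately — the $g$-preimages of the two endpoints of $\bbP_0$ are closed under traversing arcs in one fixed direction in each factor, and one uses this, together with the abundance of internal endomorphisms of the factors $\bbQ_i$, to replace $g$ by a surjection with $g(\mathbf 0)$ and $g(\mathbf e)$ at the endpoints (equivalently, to find $\mathbf a\in g^{-1}(0)$ and $\mathbf b\in g^{-1}(n-1)$ with $\mathbf a\,W\,\mathbf b$ in $\bbK$).

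For (a), suppose $\bbP$ is as stated. The hypothesis first forbids a short common upper bound: no word of length $n-2$ can dominate all the $\bbQ_i$ in $\le$, since by Lemma~\ref{lemma-utilities}(3) such a word $W$ would satisfy $W\not\ge\bbP$ (as $|w(\bbP)|=n-1$), contrary to hypothesis. Thus the $\bbQ_i$ cannot all be refinements of one length-$(n-2)$ word; their orientations are mutually incompatible, and this is precisely the regime in which the underlying graph of $\bbK=\prod_i\bbQ_i$ is long enough to map onto a path as long as $\bbP$. Concretely, I would construct a homomorphism $\widehat f\colon\bbK\to\widehat\bbG$ into the universal covering path of $\bbG$ whose image is a subpath of $\widehat\bbG$ of length at least $n-1$; then $\pi\circ\widehat f$, followed by a retraction onto a length-$(n-1)$ subpath exactly as in the proof of Lemma~\ref{lemma-onto-path}, is a surjection from $\bbK$ onto an induced subpath of $\bbG$ of length $n-1$, so $\bbG$ fails the path condition. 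Building $\widehat f$ amounts to attaching to each tuple $\mathbf x=(x_i)_i\in\bbK$ an integer ``winding value'' that is coherent along all arcs of $\bbK$ and whose range spans at least $n-1$ consecutive vertices of $\widehat\bbG$; showing that this range is long enough, using the absence of a short common upper bound of the $\bbQ_i$, is the step I expect to be the main obstacle of the proof.
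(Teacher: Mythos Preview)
Both directions of your proposal have genuine gaps, and in each case the paper fills the gap with an idea you have not identified.

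\textbf{Direction (b), i.e.\ $(2)\Rightarrow(1)$.} You correctly reduce to finding $\mathbf a\in g^{-1}(0)$ and $\mathbf b\in g^{-1}(n-1)$ with $\mathbf a\,W\,\mathbf b$ in $\bbK$, but your ``normalisation'' argument (preimages closed under arcs in a fixed direction, internal endomorphisms of the factors) is vague and does not work as stated: the factors $\bbQ_i$ are arbitrary reflexive paths, so no single direction is preferred, and there is no reason the corner vertices should land on the endpoints of $\bbP_0$. The paper's argument is different and much simpler: pick \emph{any} $\alpha\in g^{-1}(a)$ and $\beta\in g^{-1}(b)$, and let $\bbR_j$ be the subpath of the $j$-th factor running from $\alpha_j$ to $\beta_j$. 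Then either $\bbQ_j\ge\bbR_j$ or $\overline{\bbQ_j}\ge\bbR_j$, depending on which of $\alpha_j,\beta_j$ comes first. The key point you are missing is that \emph{the list $\bbQ_1,\dots,\bbQ_l$ of all induced $(n-2)$-subpaths of $\bbG$ is closed under taking duals}; hence $W\ge\bbQ_i$ for all $i$ forces $W\ge\bbR_j$ for every $j$, so $\alpha\,W\,\beta$ in $\bbK$, and therefore $a\,W\,b$ in $\bbP_0$, i.e.\ $W\ge\bbP_0$. No normalisation of $g$ is needed.

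\textbf{Direction (a), i.e.\ $(1)\Rightarrow(2)$.} Your plan is to build a map $\widehat f:\bbK\to\widehat{\bbG}$ whose image spans $n$ consecutive vertices, using only the nonexistence of a common upper bound of length $n-2$ for the $\bbQ_i$. You yourself flag the range estimate as ``the main obstacle'' and give no argument for it; I do not see how to carry this out, and in any case it is a substantial detour. The paper instead invokes the extension lemma for reflexive paths (Lemma~\ref{lemma-extend}, a consequence of path duality): set $X=\{\alpha,\beta\}\subset\prod_i\bbQ_i$ with $\alpha,\beta$ the two corner tuples, and $g(\alpha)=a$, $g(\beta)=b$ the endpoints of $\bbP$. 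Whenever $\alpha\,W\,\beta$ in $\prod_i\bbQ_i$, projecting to each factor gives $W\ge\bbQ_i$ for all $i$, and then the hypothesis yields $W\ge\bbP$, i.e.\ $a\,W\,b$. So the extension lemma produces a homomorphism $\prod_i\bbQ_i\to\bbP$ sending $\alpha\mapsto a$ and $\beta\mapsto b$; by connectedness of the product it is onto, and $\bbG$ fails the path condition. This is a two-line argument once Lemma~\ref{lemma-extend} is in hand, whereas your covering-space route leaves the essential step unproved.
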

 
Before we launch into the proof, we shall require some facts about extending partial maps to paths. The next result is a consequence of Theorem 42 in \cite{MR2854146}, see also Theorems 19 and 20 in \cite{MR3631056} as well as the comment that follows them. 

\begin{lemma} \label{lemma-extend} Let $\bbP$ be a path. Let $\bbK$ be a digraph, let $X \subseteq K$ and let $g:X \rightarrow P$ be any map. There exists a homomorphism from $\bbK$ to $\bbP$ whose restriction to $X$ is $g$ if and only if, for every $x,y \in X$, and any word $W$, if $x W y$ in $\bbK$ then $g(x) W g(y)$ in $\bbP$. \end{lemma}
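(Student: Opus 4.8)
The plan is as follows. Necessity is immediate: if $h:\bbK\to\bbP$ is a homomorphism with $h|_X=g$ and $x\,W\,y$ holds in $\bbK$, say via a homomorphism $\varphi:P(W)\to\bbK$ with $\varphi(0)=x$ and $\varphi(|W|)=y$, then $h\circ\varphi$ is a homomorphism $P(W)\to\bbP$ witnessing $g(x)\,W\,g(y)$ in $\bbP$; this is just an instance of Lemma~\ref{lemma-utilities}(6).

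For the converse I would first record the structural fact on which everything rests: every reflexive path $\bbP$ admits a majority polymorphism. Order its vertices $0<1<\cdots<m$ along the path and let $\mu$ be the median operation for this order. Since medians are monotone, arcs $(a_i,b_i)\in E(\bbP)$ for $i=1,2,3$ force $|\mu(\bar a)-\mu(\bar b)|\le 1$; and if, say, $\mu(\bar b)=\mu(\bar a)+1$, then at least two of the $a_i$ are $\le\mu(\bar a)$ and at least two of the $b_i$ are $\ge\mu(\bar a)+1$, so some coordinate $i$ has $a_i\le\mu(\bar a)$ and $b_i\ge\mu(\bar a)+1$; as $|a_i-b_i|\le 1$ this gives $a_i=\mu(\bar a)$, $b_i=\mu(\bar a)+1$, so the arc $(\mu(\bar a),\mu(\bar b))$ is present in $\bbP$. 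The case $\mu(\bar b)=\mu(\bar a)-1$ is symmetric and $\mu(\bar b)=\mu(\bar a)$ is covered by reflexivity. Thus $\mu$ preserves $E(\bbP)$ and trivially every singleton unary relation.

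Next I would phrase the extension problem as a constraint satisfaction instance over $\bbP$: the variables are the vertices of $\bbK$, every arc of $\bbK$ carries the constraint $E(\bbP)$, and each $x\in X$ carries the unary constraint $\{g(x)\}$. All constraints are invariant under the majority polymorphism $\mu$, so the instance has strict width $2$; this is the content of Theorem~42 of \cite{MR2854146} (see also Theorems~19 and~20 of \cite{MR3631056} and the remark following them). Concretely, the instance has a solution if and only if, after closing it under the binary and unary constraints derivable from pairs and triples of variables, no constraint is emptied. The remaining work is to read off these derived constraints in terms of words. On the target side one uses the metric picture: since the underlying graph of $\bbP$ is a tree, any two walks with fixed endpoints are homotopic, so for vertices $a,b$ of $\bbP$ there is a least word $m(a,b)$ with $a\,m(a,b)\,b$, and $a\,W\,b$ holds in $\bbP$ iff $m(a,b)\le W$; one then checks that every binary constraint produced by the consistency procedure is an intersection of relations of the form $\{(a,b):m(a,b)\le W\}$ with order constraints coming from $\mu$, and dually that the binary constraint the procedure attaches to a pair $x,y\in X$ is generated by the words $W$ with $x\,W\,y$ in $\bbK$. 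Hence the procedure collapses some constraint to $\emptyset$ precisely when there are $x,y\in X$ and a word $W$ with $x\,W\,y$ in $\bbK$ but $g(x)\,W\,g(y)$ false in $\bbP$ — exactly the failure of our hypothesis. So under the hypothesis the instance is consistent, has a solution $h$, and $h|_X=g$ by construction. If $\bbK$ is infinite, a compactness argument over finite subdigraphs containing the relevant part of $X$ reduces to the finite case, since $\bbP$ is finite.

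The step I expect to be the main obstacle is the translation in the previous paragraph: showing that the a priori large family of $\mu$-closed binary relations over $\bbP$, and in particular the constraints the consistency algorithm actually manufactures, are all walk relations $\{(a,b):a\,W\,b\}$ intersected with order relations, and that on the $\bbK$ side the derived constraint between two points of $X$ is a meet of such relations. This is where the special geometry of paths (their tree/metric structure and the poset of words under $\le$, which is downward directed enough for these meets to behave) does the real work, and it is essentially the point at which I would invoke the cited theorems rather than re-derive the machinery.
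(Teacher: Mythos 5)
Your necessity argument and your observation that a reflexive path admits the median as a majority polymorphism are both correct, but the sufficiency direction has a genuine gap, and it sits exactly where you flag it. The lemma requires that non-extendability be witnessed by a \emph{single} word $W$ and a \emph{single} pair $x,y\in X$ with $x\,W\,y$ in $\bbK$ but not $g(x)\,W\,g(y)$ in $\bbP$. What strict width $2$ gives you is only: the instance is solvable iff establishing $(2,3)$-consistency does not empty a constraint. Bridging from ``the consistency procedure empties a constraint'' to ``there is one word and one pair in $X$ violating the hypothesis'' is the substantive content, and your ``one then checks\dots'' paragraph does not establish it. In particular, the derived binary constraint between two variables is not in general the intersection over walks of the single-walk composition relations: path consistency intersects at intermediate variables \emph{before} composing, so compositions of intersections can be strictly smaller than intersections of compositions. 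Showing that for a path target this collapse does not lose anything (via the interval/Helly structure of the relations $\{b: a\,W\,b\}$ on a path), and that the inconsistency can always be localized between two variables of $X$ rather than, say, between a variable of $X$ and an unconstrained one, is precisely the duality statement you would need — and it is not delivered by the theorems in the form you invoke them.

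The paper's route avoids this entirely: it works contrapositively with the $\tau$-structure $\bbP^c$ ($\bbP$ together with all singleton unary relations) and quotes the cited theorems as asserting that $\bbP^c$ has \emph{path duality}. Non-existence of an extension means $\bbK^c\not\rightarrow\bbP^c$, so duality immediately produces a $\tau$-path $\bbQ^c$ mapping to $\bbK^c$ but not to $\bbP^c$; reflexivity of $\bbP$ lets one assume the unary constraints sit only at the two endpoints, and reading off $w(\bbQ)$ and the images of the endpoints gives exactly the word $W$ and the pair $x,y\in X$ with $g(x)\,W\,g(y)$ failing. So the single-path, endpoints-in-$X$ witness comes for free from the duality theorem, rather than having to be extracted by hand from a consistency algorithm. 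If you want to keep your majority/strict-width framework, you would have to prove the interval/Helly translation you sketch (or re-derive path duality for $\bbP^c$ from it), which is essentially redoing the cited results rather than invoking them.
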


%\bnote{I need to formalise this a bit more, using coloured digraphs and homomorphisms between them. Later}
\begin{proof}  Necessity is immediate by Lemma \ref{lemma-utilities} (6).  Now suppose there exists no homomorphism from $\bbK$ to $\bbP$ whose restriction to $X$ is $g$. 
Consider the relational structure $\bbP^c$ consisting of the path $\bbP$ together with all singleton unary relations $\{p\}$, $p \in P$; let $\tau$ denote the signature of this structure.   Consider the $\tau$-structure $\bbK^c$ consisting of the digraph $\bbK$ where each element $x \in X$ is constrained by the unary relation $\{g(x)\}$. Then $\bbK^c$ does not admit a homomorphism to $\bbP^c$. It follows from the results mentioned above that $\bbP^c$ has path duality, hence there exists some $\tau$-path $\bbQ^c$ that admits a homomorphism $h$ to $\bbK^c$ but no homomorphism to $\bbP^c$. Because $\bbP$ is reflexive, it is easy to see that such a $\tau$-path consists of a directed path with endpoints constrained by some unary relations; let's denote the path by $\bbQ$, its endpoints by $u$ and $v$ which are constrained by the unary relations $\{a\}$ and $\{b\}$ respectively; let $h:\bbQ^c \rightarrow \bbK^c$ be the homomorphism. Since $h$ is a $\tau$-homomorphism we must have that $x=h(u)$ and $y=h(v)$ belong to $X$ and $g(x)=a$, $g(y)=b$.   If $W$ denotes the word $w(\bbQ)$ where $\bbQ$ is the underlying path of $\bbQ^c$, then we have that  $x W y$;  but $g(x)  W g(y)$ does not hold, as desired. \end{proof}

 \begin{proof} (Theorem \ref{theorem-surj-path})  (1) $\Rightarrow$ (2): suppose that there exists an induced subpath $\bbP$  of $\bbG$ of length $n-1$ such that, for every word $W \geq \bbQ_i$ for all $1 \leq i \leq l$ we have $W \geq \bbP$. For each $i$ let $a_i,b_i$ denote the endpoints of $\bbQ_i$, let $X = \{\alpha,\beta\} \subseteq \prod_i \bbQ_i$ where $\alpha = (a_1,\dots,a_l)$ and let $\beta=(b_1,\dots,b_l)$; and let $g:X \rightarrow \bbP$ be defined by $g(\alpha)=a$ and $g(\beta)=b$ where $a$ and $b$ are the endpoints of $\bbP$. Let $W$ be a word such that $\alpha W \beta$ in $\prod_i \bbQ_i$; composing with the projection maps this implies that $W \geq \bbQ_i$ for all $i$ and so $W \geq \bbP$ by hypothesis; this means that $a W b$ in $\bbP$, and the last lemma implies that there exists a homomorphism from $\prod_i \bbQ_i$ to $\bbP$ that extends $g$. Since $\prod_i \bbQ_i$ is connected, so is its image which contains the endpoints of $\bbP$, so this map is onto, so $\bbG$ fails the path condition.  \\

 (2) $\Rightarrow$ (1): suppose that $\bbG$ fails the path condition. Then by Lemma \ref{lemma-onto-path} there exists an induced subpath $\bbP$ of $\bbG$ of length $n-1$, induced subpaths $\bbQ_j$ of $\bbG$ of length $n-2$ and a homomorphism from $\bbQ=\prod_j \bbQ_j$ onto $\bbP$. Let $a$ and $b$ denote the endpoints of $\bbP$, and let $\alpha,\beta \in \bbQ$ such that $f(\alpha)=a$ and $f(\beta)=b$. Let $a_j,b_j$ denote the coordinates of $\alpha$ and $\beta$ in $\bbQ_j$ respectively, and let $\bbR_j$ denote the induced subpath  of $\bbQ_j$ {\em from $a_j$ to $b_j$}. Notice that depending on which  of  $a_j$ or $b_j$ appears first in the path $\bbQ_j$, we have that $\bbQ_j \geq \bbR_j$ or  $\bbQ_j \geq \overline{\bbR_j}$. But because the set of paths $\bbQ_1,\dots,\bbQ_l$ is closed under taking duals, i.e. for every $i$ there exists $s$ such that $\overline{\bbQ_i}=\bbQ_s$, if the word $W$ satisfies $W \geq \bbQ_i$ for all $i$ then $W \geq \bbR_j$ for all $j$. Hence $\alpha W \beta$ in $\bbQ$, and thus $f(\alpha) W f(\beta)$ in $\bbP$, i.e. $W \geq \bbP$.

\end{proof}

\section{Cycles that fail the path condition: a characterisation}

 \begin{definition} We say that the cycle $\bbG$ is {\em almost symmetric} if $\bbG$ contains exactly one non-symmetric edge. \end{definition}

The main theorem of this section is the following: 

\begin{theorem} \label{theorem-char} Let $n \geq 3$. The $n$-cycle  $\bbG$ fails the path condition if and only if $\bbG$ is almost symmetric or 
 there exists a self-dual path $\bbS$ and a positive integer $k$ such that $\bbG = (\bbS**)^k\,\bbS+$. \end{theorem}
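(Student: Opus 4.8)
The plan is to use the combinatorial reformulation of the path condition given by Theorem \ref{theorem-surj-path}: the $n$-cycle $\bbG$ fails the path condition if and only if there exists an induced subpath $\bbP$ of length $n-1$ such that every word $W$ with $W \geq \bbQ_i$ for all induced subpaths $\bbQ_i$ of length $n-2$ also satisfies $W \geq \bbP$. Write $\bbG$ as a cyclic word of length $n$ in the alphabet $\cA = \{+,-,*\}$. Deleting a vertex of the cycle means reading the cyclic word starting just after that vertex and stopping just before it, so the $\bbQ_i$ are exactly the $n$ ``rotations'' of $\bbG$ viewed as linear words of length $n-2$ (obtained by deleting the two arcs incident to the removed vertex), and the $\bbP$'s are the $n$ linear words of length $n-1$ obtained by deleting a single arc. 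So the entire theorem becomes a statement purely about cyclic words: characterise those cyclic words $\bbG$ for which some length-$(n-1)$ linear reading $\bbP$ lies above (in the word order $\leq$) every word that lies above all length-$(n-2)$ readings.

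First I would set up the easy direction(s): verify that almost symmetric cycles and cycles of the form $(\bbS**)^k\bbS+$ with $\bbS$ self-dual do fail the path condition, by exhibiting explicitly, for a well-chosen subpath $\bbP$ of length $n-1$, a homomorphism from a product of length-$(n-2)$ subpaths onto $\bbP$ — or equivalently, by checking condition (2) of Theorem \ref{theorem-surj-path} fails, i.e. that every $W$ above all the $\bbQ_i$ is above $\bbP$. For the almost symmetric case, the relevant $\bbP$ is the all-$*$ path obtained by deleting the unique non-symmetric arc; every $\bbQ_i$ of length $n-2$ is either all-$*$ or has a single $+$ (or $-$) near one end, and one checks directly that a word dominating all of them must dominate $*^{n-1}$. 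For the $(\bbS**)^k\bbS+$ case, one deletes the single $+$ arc to get the self-dual path $\bbP = (\bbS**)^k\bbS$ of length $n-1$ (note $\bbP$ contains the $2k \geq 2$ symmetric edges coming from the $**$ blocks); the subpaths $\bbQ_i$ of length $n-2$ are obtained by further trimming one symbol from each end of a rotation, and using the self-duality of $\bbS$ and Lemma \ref{lemma-utilities}(7) one assembles the domination of $\bbP$ from the dominations of the $\bbQ_i$'s. This direction should be a somewhat lengthy but routine case analysis with the word calculus from Lemma \ref{lemma-utilities}.

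The hard direction is to show that these are the \emph{only} cycles failing the path condition. Here I would argue contrapositively: assume $\bbG$ is neither almost symmetric nor of the form $(\bbS**)^k\bbS+$, and for each induced subpath $\bbP$ of length $n-1$ produce a witness word $W_\bbP$ with $W_\bbP \geq \bbQ_i$ for all $i$ but $W_\bbP \not\geq \bbP$. The natural candidate for $W_\bbP$ is something like the ``join'' of all the $\bbQ_i$ — the $\leq$-least word dominating all length-$(n-2)$ subpaths — and one must show this join has length $\leq n-1$ (so it cannot dominate $\bbP$ by a length argument, Lemma \ref{lemma-utilities}(3)) unless we are in one of the exceptional cases; or, when the join does have length $n-1$, show it still fails to dominate the specific $\bbP$ unless $\bbG$ is exceptional. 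Concretely: fix the arc of $\bbG$ whose deletion gives $\bbP$; the two subpaths $\bbQ_i$, $\bbQ_j$ obtained by deleting an endpoint of that arc are themselves obtained from $\bbP$ by deleting one symbol from one end, and together they ``pin down'' $\bbP$ except possibly at its two extreme symbols — so a word dominating both automatically ``almost'' dominates $\bbP$, and the remaining freedom at the two ends is exactly where the exceptional structure ($+$ at one spot, or the self-dual pattern) must appear for domination to be forced. The main obstacle, and the part requiring genuine care, is the bookkeeping of how the word-domination order interacts with deleting/appending symbols at the ends: one needs sharp statements of the form ``$W \geq aX$ and $W \geq bY$ for all the relevant $a,b,X,Y$ forces $W$ to begin/end in a prescribed way,'' and then to globalise this around the cycle, tracking the positions of the symmetric edges. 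I expect this to be where most of the technical weight of the section lies, presumably broken into several preparatory lemmas about words before the final synthesis; once those are in place, the theorem follows by matching the forced structure against the two templates in the statement.
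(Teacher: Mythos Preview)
Your proposal correctly identifies the two directions and the overall shape of the argument, but both directions have genuine gaps.

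For the easy direction, your sketch omits the one non-obvious step. Knowing that $W$ dominates every length-$(n-2)$ subpath does \emph{not} obviously force $W \geq *^{n-1}$ in the almost symmetric case: for instance $W = *^{n-2}$ dominates the all-$*$ subpath of length $n-2$ but not $*^{n-1}$. The paper isolates the key trick as a separate lemma (Lemma~\ref{lemma-above}): if $W \geq A+B$ and $W \geq A-B$ then $W \geq A{+}{-}B$ or $W \geq A{-}{+}B$, hence $W \geq A{*}{*}B$. Applying this with $A = *^{n-3}$, $B = \epsilon$ (almost symmetric case) or $A = \bbS$, $B = (\bbS{*}{*})^{k-1}\bbS$ (the other case) gives the easy direction immediately. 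Your appeal to Lemma~\ref{lemma-utilities}(7) alone does not supply this merging step.

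For the hard direction, the ``join'' strategy cannot work as written. First, the poset of words under $\leq$ is not a join-semilattice: already $+$ and $-$ have incomparable minimal upper bounds $+-$ and $-+$. Second, and more seriously, the paper's witness word $W_N(\bbP)$ is deliberately \emph{much longer} than $n$ (length roughly $N \cdot |\bbP'|$ with $N \gg n$), so the length argument you propose (show the witness has length $\leq n-1$, then invoke Lemma~\ref{lemma-utilities}(3)) is the wrong mechanism entirely. What the paper does instead is construct $W_N(\bbP)$ explicitly from $\bbP$ via the operations $[x,y]$ and $\widehat{x}$, prove directly that $W_N(\bbP) \not\geq \bbP$ (Lemma~\ref{lemma-david-1-5-a}), and then show through a long sequence of lemmas (Propositions~\ref{prop-david-1-6}--\ref{prop-final-decompo}) that any $\bbQ$ with $\bbQ \not\leq W_N(\bbP)$ forces very rigid structure on $\bbG$: two consecutive $*$'s in one place and a non-$*$ edge elsewhere, aligned in a specific way. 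This culminates in Theorem~\ref{theorem-david}, which says $\bbG = C(A{*}{*}B+) = C(A{+}B{*}{*}) = C(A{-}B{*}{*})$ for some $A,B$; word-equation lemmas (Lemmas~\ref{lemma-1-fix}, \ref{lemma-2-fix}, \ref{lemma-symmetry}) then solve for the announced form. None of this is captured by ``look at the two $\bbQ_i$ adjacent to the deleted arc and track the freedom at the ends'': the exceptional $\bbQ$'s in the paper's analysis can sit anywhere on the cycle, and pinning down their location is precisely the content of Lemma~\ref{lemma-decompo-b} and Proposition~\ref{prop-final-decompo}.
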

 
 We first make an easy but important observation. 
 
  \begin{lemma} \label{lemma-above}Let $W,A,B$ be words  such that $W \geq A + B$ and $W \geq A - B$. Then $W \geq A+-B$ or $W \geq A -+B$. \end{lemma}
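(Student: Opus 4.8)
\emph{Proof proposal.} The plan is to restate the relation $U\leq V$ as a purely combinatorial condition on the words $U,V$, and then to obtain the lemma by a short ``splicing'' of two selections of positions. Given a word $W$ with letters $w_1,w_2,\dots$, say that a strictly increasing list of positions $a_1<\cdots<a_m$ in $\{1,\dots,|W|\}$ \emph{realizes} a word $U=u_1\cdots u_m$ if $w_{a_j}=u_j$ for every $j$ with $u_j\neq *$. I claim that $U\leq W$ if and only if some such list realizes $U$. Indeed, if $U\leq W$ (say $|W|=k$, $|U|=m$) then by Lemma \ref{lemma-utilities}(1) there is a monotone end-point preserving homomorphism $f$ from $P(W)$ onto $P(U)$; monotonicity and surjectivity force the fibres $f^{-1}(0),\dots,f^{-1}(m)$ to be $m+1$ consecutive nonempty blocks of $\{0,\dots,k\}$ in this order, with $0\in f^{-1}(0)$ and $k\in f^{-1}(m)$. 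Put $a_j=\min f^{-1}(j)$, so $1\leq a_1<\cdots<a_m\leq k$. The only edges of $P(W)$ not collapsed to a loop are those joining $a_j-1$ to $a_j$; such an edge is carried by the letter $w_{a_j}$ and is sent by $f$ onto the edge of $P(U)$ joining $j-1$ to $j$, carried by $u_j$. That $f$ is a homomorphism there says precisely that the orientations present in $w_{a_j}$ are among those present in $u_j$; since $w_{a_j}$ has at least one orientation, this is automatic if $u_j=*$ and forces $w_{a_j}=u_j$ if $u_j\in\{+,-\}$. Conversely, given a realizing list, assigning the consecutive blocks delimited by the $a_j$ to $0,1,\dots,m$ defines the required end-point preserving surjection.

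Now apply this. Put $p=|A|$, $q=|B|$. From $W\geq A+B$ we obtain a list $s_1<\cdots<s_p<s_{p+1}<s_{p+2}<\cdots<s_{p+1+q}$ realizing $A+B$: the $s_i$ with $i\leq p$ realize $A$, the letter ``$+$'' (not $*$) forces $w_{s_{p+1}}=+$, and $s_{p+2},\dots,s_{p+1+q}$ realize $B$. Symmetrically $W\geq A-B$ gives a list $t_1<\cdots<t_p<t_{p+1}<t_{p+2}<\cdots<t_{p+1+q}$ with the $t_i$ ($i\leq p$) realizing $A$, $w_{t_{p+1}}=-$, and $t_{p+2},\dots,t_{p+1+q}$ realizing $B$. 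Since $w_{s_{p+1}}=+\neq-=w_{t_{p+1}}$, the positions $s_{p+1}$ and $t_{p+1}$ differ. If $s_{p+1}<t_{p+1}$, then $s_1<\cdots<s_p<s_{p+1}<t_{p+1}<t_{p+2}<\cdots<t_{p+1+q}$ is a strictly increasing list of $p+2+q=|A+-B|$ positions of $W$ realizing $A+-B$ (the first $p$ realize $A$; then $w_{s_{p+1}}=+$; then $w_{t_{p+1}}=-$; then the last $q$ realize $B$), so $W\geq A+-B$. If instead $t_{p+1}<s_{p+1}$, the list $t_1<\cdots<t_p<t_{p+1}<s_{p+1}<s_{p+2}<\cdots<s_{p+1+q}$ realizes $A-+B$, so $W\geq A-+B$. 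This proves the lemma.

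The main obstacle is the combinatorial description of $\leq$: the passage to monotone maps via Lemma \ref{lemma-utilities}(1), together with the observation that a non-$*$ letter of the target of such a map must be matched by an \emph{identical} (not merely compatible) letter of the source; after that, the argument is just bookkeeping with increasing index sequences. If one prefers not to isolate this criterion, the same idea can be carried out directly on homomorphisms: given monotone surjections $f\colon P(W)\twoheadrightarrow P(A+B)$ and $g\colon P(W)\twoheadrightarrow P(A-B)$, let $M$ and $N$ be the largest vertices of $P(W)$ with $f(M)=|A|$ and $g(N)=|A|$; the letter of $W$ just after $M$ (resp.\ just after $N$) must be $+$ (resp.\ $-$), so $M\neq N$, and when $M<N$ the map agreeing with $f$ on $\{0,\dots,M\}$, equal to the constant $|A|+1$ on $\{M+1,\dots,N\}$, and equal to $g(\cdot)+1$ on $\{N+1,\dots,|W|\}$ is a monotone end-point preserving surjection onto $P(A+-B)$; the only mildly delicate point, the homomorphism property at the two new block boundaries, amounts exactly to the two ``forced letter'' facts.
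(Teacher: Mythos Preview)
Your proof is correct and follows essentially the same approach as the paper's: locate in $W$ the specific ``$+$'' (resp.\ ``$-$'') that is forced by the non-$*$ middle letter of $A+B$ (resp.\ $A-B$), compare their positions, and splice the two decompositions accordingly. The paper does this more tersely by writing $W=U_1+U_2$ with $U_1\geq A$, $U_2\geq B$ and $W=V_1-V_2$ with $V_1\geq A$, $V_2\geq B$, then comparing $|U_1|$ and $|V_1|$; your explicit ``realizing list'' criterion is the same observation unpacked (and is in fact the characterisation the paper invokes later, in the proof of Lemma~\ref{lemma-david-1-5-b}(5)).
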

 
 \begin{proof} Using Lemma \ref{lemma-utilities} (1)   it easy to see that $W \geq A+B$ implies we can write $W=U_1+U_2$ with $U_1 \geq A$ and $U_2 \geq B$; similarly we can write $W=V_1-V_2$ with $V_1 \geq A$ and $V_2 \geq B$. If $|V_1| < |U_1|$ then $W = V_1-Z+U_2$ for some word $Z$, and thus $W \geq A-+B$; otherwise $|U_1| < |V_1|$ and a similar argument shows that $W \geq A+-B$.

 \end{proof}
 
The next two lemmas prove the easy direction.

  \begin{lemma} \label{lemma-fail-1} Let $\bbG$ be an almost symmetric cycle of girth at least 3. Then $\bbG$ fails the path condition. \end{lemma}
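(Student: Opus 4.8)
The plan is to deduce this directly from the combinatorial reformulation of the path condition in Theorem~\ref{theorem-surj-path}. By Lemma~\ref{lemma-utilities}(4) (choosing a suitable starting point and orientation) we may assume $\bbG = C(*^{n-1}+)$: the underlying cycle has $n$ vertices, one forward arc, and all other edges symmetric. First I would enumerate the subpaths occurring in Theorem~\ref{theorem-surj-path}. Deleting a vertex of $\bbG$ gives, up to dualization, the words $*^{n-2}$ and $*^{j}+*^{n-3-j}$ for $0\le j\le n-3$, together with the duals $*^{j}-*^{n-3-j}$; these are the $\bbQ_i$. Deleting the unique non-symmetric arc gives the all-symmetric path $\bbP:=P(*^{n-1})$ of length $n-1$. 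By Theorem~\ref{theorem-surj-path} it then suffices to show that every word $W$ with $W\ge \bbQ_i$ for all $i$ automatically satisfies $W\ge \bbP$; this is what I would establish.

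So fix such a $W$. The key point is that among the $\bbQ_i$ one finds both $+*^{n-3}$ and $-*^{n-3}$: the word $+*^{n-3}$ arises by deleting the symmetric-side neighbour of the tail of the non-symmetric arc, the word $*^{n-3}+$ by deleting the symmetric-side neighbour of the head, and $-*^{n-3}=\overline{*^{n-3}+}$ lies in the family since it is closed under duals. Thus $W\ge +*^{n-3}$ and $W\ge -*^{n-3}$, and Lemma~\ref{lemma-above}, applied with $A=\epsilon$ and $B=*^{n-3}$, yields $W\ge +-*^{n-3}$ or $W\ge -+*^{n-3}$. In either case Lemma~\ref{lemma-utilities}(3) forces $|W|\ge n-1$.

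To conclude I would invoke the elementary fact that $|W|\ge n-1$ already implies $*^{n-1}\le W$. Indeed $*\le a$ for every letter $a\in\cA$ and $\epsilon\le X$ for every word $X$, so writing $W=a_1\cdots a_m$ with $m\ge n-1$ and applying Lemma~\ref{lemma-utilities}(7) repeatedly gives $*^{n-1}\le a_1\cdots a_{n-1}$, whence $*^{n-1}=*^{n-1}\epsilon\le(a_1\cdots a_{n-1})(a_n\cdots a_m)=W$. Therefore $W\ge \bbP$, and Theorem~\ref{theorem-surj-path} gives that $\bbG$ fails the path condition.

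I do not expect a genuine obstacle: once Theorem~\ref{theorem-surj-path} is available the content is simply that a word dominating both $+*^{n-3}$ and $-*^{n-3}$ is long enough to dominate the symmetric path of length $n-1$. What requires care is purely bookkeeping --- correctly identifying the vertex-deleted and arc-deleted subpaths of $\bbG$, using that the family of length-$(n-2)$ subpaths (taken in both orientations) is closed under duals so that $-*^{n-3}$ is genuinely present, and isolating the monotonicity fact $*^m\le W$ whenever $|W|\ge m$, which it may be cleaner to record once and reuse.
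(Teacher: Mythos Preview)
Your proof is correct and follows essentially the same approach as the paper: both pick the all-symmetric subpath $\bbP=*^{n-1}$, use two specific length-$(n-2)$ subpaths differing only in the sign of their lone non-$*$ symbol, apply Lemma~\ref{lemma-above}, and conclude via Theorem~\ref{theorem-surj-path}. The only cosmetic differences are that the paper uses the pair $*^{n-3}+$, $*^{n-3}-$ (rather than your $+*^{n-3}$, $-*^{n-3}$) and passes directly from $W\geq *^{n-3}{+}{-}$ (or $*^{n-3}{-}{+}$) to $W\geq *^{n-1}$ via $+-\geq **$ and $-+\geq **$, without the intermediate length argument.
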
 
    
    \begin{proof} Let $n$ denote the girth of $\bbG$; let $\bbP = (*)^{n-1}$, let $\bbQ =  (*)^{n-3}\,-$ and let $\bbR =  (*)^{n-3}\,+$. Clearly $\bbP$ is an induced subpath of $\bbG$ of length $n-1$ and $\bbQ,\bbR$ are induced subpaths of $\bbG$ of length $n-2$. Let $W \geq \bbQ$ and $W \geq \bbR$. By Lemma \ref{lemma-above} we have that $W \geq (*)^{n-3}-+$ or  $W \geq (*)^{n-3}+-$; in any case $W \geq (*)^{n-1} = \bbP$. It follows by Theorem \ref{theorem-surj-path} that $\bbG$ fails the path condition. \end{proof}

 \begin{lemma}  \label{lemma-fail-2}  Let $\bbS$ be a self-dual path, let $k$ be a positive integer. Then the cycle $\bbG = (\bbS**)^k\, \bbS+$ fails the path condition. \end{lemma}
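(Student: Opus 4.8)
The plan is to invoke Theorem~\ref{theorem-surj-path}: it is enough to exhibit one induced subpath $\bbP$ of $\bbG$ of length $n-1$ such that every word $W$ with $W\geq\bbQ$ for all induced subpaths $\bbQ$ of $\bbG$ of length $n-2$ also satisfies $W\geq\bbP$. I would take $\bbP=(\bbS**)^{k}\bbS$, which is an induced subpath of $\bbG$ of length $n-1$ (delete the final $+$ arc) and is self-dual since $\bbS$ is. Set $A=\bbS$ and $B=(\bbS**)^{k-1}\bbS$, with the convention $(\bbS**)^{0}\bbS=\bbS$; then $A$ and $B$ (hence also $B+A$) are self-dual words and $\bbP=A**B$.

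The combinatorial heart of the argument is the claim that, up to reversal, the words $A+B$ and $A-B$ both represent induced subpaths of $\bbG$ of length $n-2$. Reading the cyclic word of $\bbG$ as $(\bbS**)^{k}\bbS+$ — that is, $k+1$ blocks $\bbS$ separated by $k$ blocks $**$, followed by $+$ which closes the cycle back to the first $\bbS$ — I would argue that deleting the vertex lying between the two arcs of the \emph{last} $**$ block produces the path with word $\bbS+(\bbS**)^{k-1}\bbS=A+B$, while deleting the vertex inside the \emph{first} $**$ block produces the path with word $(\bbS**)^{k-1}\bbS+\bbS$; by self-duality of $A$ and $B$ this last word equals $\overline{A-B}$, so that path represents $P(A-B)$. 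Each of these two assertions is the same elementary index bookkeeping already used implicitly in the proof of Lemma~\ref{lemma-fail-1}. (When $k=1$ the two deletions coincide, but then $A=B=\bbS$ and the single subpath so obtained has \emph{both} $\bbS+\bbS=A+B$ and $\bbS-\bbS=A-B$ as word representations; and if in addition $\bbS=\epsilon$ then $\bbG$ is almost symmetric and the lemma is already covered by Lemma~\ref{lemma-fail-1}.)

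To finish, let $W$ be any word with $W\geq\bbQ$ for every induced subpath $\bbQ$ of $\bbG$ of length $n-2$. Since (as noted in the proof of Theorem~\ref{theorem-surj-path}) this family of subpaths is closed under the involution $\overline{\phantom{x}}$, we obtain $W\geq A+B$ and $W\geq A-B$. Lemma~\ref{lemma-above} then gives $W\geq A+-B$ or $W\geq A-+B$; and since $*\leq+$ and $*\leq-$, repeated use of Lemma~\ref{lemma-utilities}(7) gives $A**B\leq A+-B$ and $A**B\leq A-+B$, so in either case $W\geq A**B=\bbP$. Thus for the induced subpath $\bbP$ there is no word that is above all the length-$(n-2)$ subpaths of $\bbG$ but not above $\bbP$, so condition~(2) of Theorem~\ref{theorem-surj-path} fails and $\bbG$ does not satisfy the path condition.

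I do not anticipate a genuine obstacle: the proof is structurally the proof of Lemma~\ref{lemma-fail-1}, with $(*)^{n-1}$ replaced by $(\bbS**)^{k}\bbS$ and the two subpaths $(*)^{n-3}\pm$ replaced by $A\pm B$. The only point that needs a little care is keeping track of word reversals when locating the relevant length-$(n-2)$ subpaths inside $\bbG$; this causes no trouble precisely because $\bbS$ — and hence each of $A$, $B$ and $(\bbS**)^{k-1}\bbS$ — is self-dual, and because the family of length-$(n-2)$ subpaths appearing in Theorem~\ref{theorem-surj-path} is closed under duals.
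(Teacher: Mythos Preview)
Your proof is correct and follows essentially the same route as the paper: the paper also takes $\bbP=(\bbS**)^{k}\bbS$, the two subpaths $\bbQ=\bbS+(\bbS**)^{k-1}\bbS$ and $\bbR=\bbS-(\bbS**)^{k-1}\bbS$ (your $A+B$ and $A-B$), applies Lemma~\ref{lemma-above}, and concludes via Theorem~\ref{theorem-surj-path}. Your write-up is slightly more explicit about locating the two subpaths inside $\bbG$ and about the role of self-duality and closure under reversal, but the argument is the same.
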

 
 \begin{proof}   Let $n$ denote the girth of $\bbG$. Clearly $\bbP =  (\bbS**)^{k}\,\bbS$ is an induced subpath of $\bbG$ of length $n-1$.  It is easy to see that the paths $\bbQ = \bbS+(\bbS**)^{k-1}\bbS$ and $\bbR = \bbS-(\bbS**)^{k-1}\bbS$ are induced subpaths of $\bbG$ of length $n-2$ (for $\bbR$, read the word for $\bbG$ ``backwards''). Let $W \geq \bbQ$ and $W \geq \bbR$. By Lemma \ref{lemma-above},  we  have that $W \geq  \bbS+-(\bbS**)^{k-1}\bbS$ or $ W \geq \bbS-+(\bbS**)^{k-1}\bbS$; in any case we have that $W \geq  \bbS**(\bbS**)^{k-1}\bbS$, i.e. $W \geq \bbP$. It follows by Theorem \ref{theorem-surj-path} that $\bbG$ fails the path condition. 
 
 \end{proof}
 
 \subsection{A necessary condition to fail the path condition}
 
 Our goal in this subsection is to prove the following technical result:

%\bnote{are A and B both non-empty ? Makes no difference for lemmas that follow. }
\begin{theorem} \label{theorem-david} Let $\bbG$ be a cycle that fails the path condition. Then there exists words $A$ and $B$ such that 
$\bbG = C(A**B+) = C(A+B**) = C(A-B**)$. \end{theorem}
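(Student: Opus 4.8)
The plan is to stay on the purely combinatorial side opened up by Theorem~\ref{theorem-surj-path}. Write $\bbG=C(x_1\cdots x_n)$. Since $\bbG$ fails the path condition, Theorem~\ref{theorem-surj-path} produces a length-$(n-1)$ subpath $\bbP$ of $\bbG$ that is \emph{forced}: every word $W$ with $W\ge\bbQ$ for all induced subpaths $\bbQ$ of $\bbG$ of length $n-2$ (we keep this family closed under the involution $\overline{\;\cdot\;}$, as in the proof of Theorem~\ref{theorem-surj-path}) also satisfies $W\ge\bbP$. Rotating the word for $\bbG$, we may assume $\bbP=P(x_1\cdots x_{n-1})$, so $\bbG=C(x_1\cdots x_{n-1}\,c)$ with $c:=x_n$. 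Recall that a length-$(n-2)$ induced subpath of a cycle is exactly what one gets by deleting a single vertex, i.e.\ --- up to reading the word backwards --- by deleting two cyclically consecutive letters of $x_1\cdots x_n$. As a preliminary observation, $\bbG$ must carry an oriented edge: if every $x_i$ were $*$, the word $(*)^{n-2}$ would dominate every $\bbQ$ while failing to dominate $\bbP=(*)^{n-1}$, contradicting that $\bbP$ is forced.

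The heart of the argument is the claim that \emph{$\bbP$ contains two consecutive symmetric edges}, i.e.\ $x_j=x_{j+1}=*$ for some $1\le j\le n-2$. I would establish this by contradiction. The two length-$(n-2)$ subpaths obtained from $\bbP$ by deleting its first, respectively its last, edge are subwords of $\bbP$, hence $\le\bbP$, so by themselves they cannot be responsible for forcing $\bbP$; any witness of the forcing must bring in the ``wrap-around'' subpaths $\bbD_v=P(x_{v+2}\cdots x_{v-1})$ ($v\notin\{0,n-1\}$), each of which contains the omitted arc $c$. The guiding intuition is that Lemma~\ref{lemma-above} is --- up to iteration --- the only mechanism by which strictly shorter words can force a longer one, and it creates a $**$ precisely at a position where two witnesses disagree by a $+$ versus a $-$; hence the extra length of $\bbP$ over the $\bbD_v$'s must be forced at a $**$ of $\bbP$. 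Concretely, assuming $\bbP$ has no $**$, I would build from the $\bbD_v$'s a common upper bound $W$ of the whole family of length exactly $n-1$ with $W\not\ge\bbP$ --- informally, $W$ ``shuffles'' the words $\bbD_v$ along $\bbP$, and when no two consecutive letters of $\bbP$ are $*$ the shuffle can never reproduce the letter $c$ that was deleted to form the $\bbD_v$'s --- contradicting the forcing. This is the step I expect to be the main obstacle: the bookkeeping of how the two ends of $\bbP$ interact through the wrap-around subpaths is delicate, and it seems unavoidable to split into cases according to whether the letters $x_1,x_{n-1}$ adjacent to the omitted arc are symmetric and according to the value of $c$; doing this uniformly rather than by an unwieldy enumeration is where the real work lies.

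Granting the claim, fix such a $j$ and write $\bbP=P(A**B)$ with $A=x_1\cdots x_{j-1}$, $B=x_{j+2}\cdots x_{n-1}$, so $\bbG=C(A**B\,c)$. I would then pin down $c$: if $c=*$, re-examining the shuffles shows the omitted symmetric arc can be absorbed into a common upper bound of the $\bbD_v$'s of length $n-1$ lying below $\bbP$, so $\bbP$ would not be forced; hence $c\in\{+,-\}$, and since ``failing the path condition'' is invariant under $\overline{\;\cdot\;}$ we may pass to the dual orientation of $\bbG$ and assume $c=+$, giving $\bbG=C(A**B+)$.

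Finally I would extract the other two presentations. For the middle $**$ of $\bbP=P(A**B)$ to be forced through (iterations of) Lemma~\ref{lemma-above}, the family of length-$(n-2)$ subpaths of $\bbG$ --- closed under duals --- must already contain $P(A+B)$ and $P(A-B)$ (after possibly replacing $\bbP$ by $\overline{\bbP}$, which only interchanges the roles of $A$ and $B$). Now ``$P(A+B)$ is a length-$(n-2)$ induced subpath of $\bbG$'' forces $\bbG=C(A+B\,cd)$ for some letters $c,d$; comparing this cyclic presentation with $\bbG=C(A**B+)$ --- using $C(W)=C(\overline W)$ and counting the $*$'s, which forces $c=d=*$, and then the oriented edges --- gives $\bbG=C(A+B**)$, and the same reasoning applied to $P(A-B)$ gives $\bbG=C(A-B**)$. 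Altogether $\bbG=C(A**B+)=C(A+B**)=C(A-B**)$, as required.
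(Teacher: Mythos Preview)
Your outline matches the paper's at the coarsest level --- find a $**$ inside the witnessing $\bbP$, then locate two length-$(n-2)$ subpaths through that spot with opposite orientation --- but the two places you flag as ``where the real work lies'' are genuine gaps, and the paper fills them with machinery you have not supplied.

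\textbf{The witness word.} You say you would ``shuffle the $\bbD_v$'s along $\bbP$'' to produce a $W$ dominating all length-$(n-2)$ subpaths yet not $\bbP$ when $\bbP$ has no $**$. The paper does exactly this, but the construction is explicit and not short: for a word $X=x_1\cdots x_r$ it sets $W_N(X)=\widehat{x_1}[x_1,x_2]\widehat{x_2}\cdots[x_{r-1},x_r]\widehat{x_r}$ (with $\widehat{+}=(-)^N$, $\widehat{-}=(+)^N$, $\widehat{*}=\epsilon$, and $[x,y]$ either $\max(x,y)$ or $yx$), and then proves through Propositions~\ref{prop-david-1-6}--\ref{prop-david-1-8} that any $\bbQ\not\le W(\bbP)$ forces $\bbG=C(\bbQ**)$ with the missing edge $e\in\{+,-\}$ and $\bbQ'=AeB$ or $\bbQ'=A\overline e B$ for some splitting $\bbP'=AB$. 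Your heuristic that ``Lemma~\ref{lemma-above} is, up to iteration, the only mechanism'' is suggestive but is not how the argument runs, and I do not see how to turn it into a proof without essentially rediscovering $W_N(X)$.

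\textbf{From ``two subpaths of opposite sign'' to the three presentations.} Even once one knows $\bbP=A**B$ and that some $\bbQ,\bbR\not\le W(\bbP)$ exist with opposite sign, it is not at all automatic that they sit at the \emph{same} position (so that one may take $\bbQ=A{+}B$, $\bbR=A{-}B$), nor that $C(A{+}B**)$ and $C(A{-}B**)$ coincide with $C(A{**}B{+})$. The paper needs a further refinement (Lemma~\ref{lemma-decompo-b}, Proposition~\ref{prop-final-decompo}) to force the two ``bad'' subpaths to share position, and then a separate word-equation lemma (Lemma~\ref{lemma-1-fix}) comparing $C(A{+}B**)$ with $C(E{-}F**)$ to conclude $B**A=\overline{E}**\overline{F}$, which is what finally collapses everything to $A=C=E$, $B=D=F$. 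Your ``counting the $*$'s'' in the last paragraph is not enough: two cyclic words representing the same cycle can differ by a rotation \emph{or} a reversal, and ruling out the wrong option is exactly the content of Lemma~\ref{lemma-1-fix}.

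In short: the skeleton is right, but both load-bearing steps are missing, and the paper's proof shows that neither is a routine bookkeeping exercise.
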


Let $\bbG$ be an $n$-cycle and let $\bbP_i$ be its subpaths of length $n-2$. According to Theorem \ref{theorem-surj-path},  $\bbG$ satisfies the path condition if we can find, for each of its induced subpaths $\bbP$ of length $n-1$, a word $W \ngeq \bbP$ such that $W \geq \bbP_i$ for all $i$. It turns out that there is a uniform way of constructing, for each path $\bbP$, a word $W(\bbP)$ that will in most cases satisfy the desired condition; furthermore,  this construction will help us finding a characterisation for the cycles that fail the path condition. The next definition describes the construction.  

%%%%%%{The idea of infinite words is interesting, but does not work well formally (at least not in any way I can see at the moment). I will formulate definitions in terms of a %%%%%% large fixed power $N$ to be determined later. and even this does not work well ..... We'll see later if we can simplify.}

\begin{definition} Let $x,y \in \cA$, and let $N$ be a positive integer. 
\begin{enumerate} 
\item Let 
$$[x,y]= \begin{cases} max\{x,y\} & \mbox{ if $x$ and $y$ are comparable}, \\
yx & \mbox{ otherwise.}
\end{cases}$$

\item Let $p=(+)^N$ and $m = (-)^N$. 
\item Let $$\widehat{x}= \begin{cases} m & \mbox{ if $x$ is $+$}, \\
p & \mbox{  if $x$ is $-$}, \\
\epsilon &  \mbox{  if $x$ is $*$}.
\end{cases}$$
\item For any non-empty word $X=x_1\cdots x_r$, let 
$$W_N(X) = \widehat{x_1} [x_1,{x_2}]\widehat{x_2} \dots  \widehat{x_{r-1}}[x_{r-1},{x_r}]\widehat{x_r}.$$
\end{enumerate}
\end{definition}

\noindent {\bf Remark.} In the following, we will assume $N$ is a fixed integer, much larger than $n$ where $n$ is the girth of the cycle $\bbG$ in question, and we shall  omit the index $N$ as much as possible.

%\newpage

The following lemma follows directly from the definitions. 

\begin{lemma} \label{lemma-w} Let $x,y \in \cA$, and let $A,B,X$ be words. Then 
\begin{enumerate}
\item  $\widehat{x}\ngeq x$,
%\item $|\widehat{x}|\leq |x|$. 
\item  $[x,y]\geq x$ and  $[x,y]\geq y$ but $[x,y]\ngeq xy$.

%\item \bnote{check where this is used} If $X = AxyB$ then $W(X) = W(Ax)[x,y]W(yB)$. 
\end{enumerate}
\end{lemma}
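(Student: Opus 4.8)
The plan is to reduce everything to parts (3) and (5) of Lemma~\ref{lemma-utilities} together with a direct unwinding of the definitions of $\widehat{x}$ and $[x,y]$; there is no genuine difficulty here, only a short case analysis over the three letters of $\cA$.

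For (1), split on the value of $x$. If $x=*$ then $\widehat{x}=\epsilon$, and since $|\epsilon|=0<1=|*|$, part (3) of Lemma~\ref{lemma-utilities} gives $*\not\leq\epsilon$, i.e.\ $\widehat{x}\ngeq x$. If $x=-$ then $\widehat{x}=p=(+)^N$, and I would argue by contradiction: if $(+)^N=+\cdot(+)^{N-1}\geq -\cdot\epsilon$, then because $+$ and $-$ are incomparable (so $+\ngeq -$), the second clause of Lemma~\ref{lemma-utilities}(5) yields $(+)^{N-1}\geq -$; iterating $N$ times produces $\epsilon=(+)^0\geq -$, contradicting part (3) on lengths. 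The case $x=+$, with $\widehat{x}=m=(-)^N$, is handled symmetrically (alternatively one deduces it from the case $x=-$ using $\widehat{+}=\overline{\widehat{-}}$, $\overline{+}=-$ and Lemma~\ref{lemma-utilities}(4)).

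For (2), consider first the case where $x$ and $y$ are comparable, so $[x,y]=\max\{x,y\}$: then $[x,y]\geq x$ and $[x,y]\geq y$ are immediate, while $|[x,y]|=1<2=|xy|$ forces $[x,y]\ngeq xy$ by Lemma~\ref{lemma-utilities}(3). In the remaining case $x$ and $y$ are incomparable, hence $\{x,y\}=\{+,-\}$ and $[x,y]=yx$; now $x$ is a suffix and $y$ a prefix of $yx$, so $x\leq yx$ and $y\leq yx$ by Lemma~\ref{lemma-utilities}(3), giving $[x,y]\geq x$ and $[x,y]\geq y$. Finally, if $yx\geq xy=x\cdot y$, then since $y\ngeq x$, Lemma~\ref{lemma-utilities}(5) would give $x\geq xy$, impossible on length grounds; hence $[x,y]\ngeq xy$.

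The only point requiring any care is the correct reading of the second clause of Lemma~\ref{lemma-utilities}(5) — namely that from $aX\geq bY$ with $a\ngeq b$ one extracts $X\geq bY$ — and verifying in each invocation that the relevant pair of letters is genuinely incomparable. I do not expect any step to present a real obstacle.
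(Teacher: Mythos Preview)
Your proof is correct and follows essentially the same approach as the paper, which simply states that the lemma ``follows directly from the definitions'' without giving any details. Your write-up is a careful unpacking of that remark via the cases of $\cA$ and the basic facts in Lemma~\ref{lemma-utilities}, and each step checks out.
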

\qed

\begin{lemma} \label{lemma-david-1-4} Let $x,y,z \in \cA$ with $x \neq *$, and let $a,b$ be integers such that $a+b \leq n$. 
\begin{enumerate}
\item $*^a\, \overline{x}\, *^b \leq \widehat{x}$;
\item  $*^a\, z \leq \widehat{x}\,[x,y]$;
\item $z \, *^a\ \leq [y,x]\,\widehat{x}$;
\item $yz  \not\leq [y,z]\,\widehat{z}$. 

\end{enumerate}
\end{lemma}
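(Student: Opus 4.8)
The plan is to prove each of the four inequalities by reducing to the defining properties of the operators $\widehat{\,\cdot\,}$ and $[\,\cdot\,,\cdot\,]$ together with the basic manipulations collected in Lemma~\ref{lemma-utilities} and Lemma~\ref{lemma-w}. Throughout, recall $N$ is fixed and very large compared to $n$, so that strings like $*^a$ and $*^b$ with $a+b\le n$ are ``short'' relative to $p=(+)^N$ and $m=(-)^N$; this is the quantitative slack that makes (1)--(3) go through.

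For part (1): since $x\neq *$, we have $\widehat{x}\in\{p,m\}$, say $\widehat{x}=m=(-)^N$ when $x=+$ (the case $x=-$ is dual via Lemma~\ref{lemma-utilities}(4)). We must show $*^a\,\overline{x}\,*^b=*^a\,-\,*^b\le (-)^N$. I would exhibit an explicit end-point preserving monotone homomorphism from $P((-)^N)$ onto $P(*^a\,-\,*^b)$: map an initial segment of the ``$-$''-path onto the first $*^a$ block (each $*$ is below $-$, and a directed path of edges maps onto a path of symmetric edges by collapsing), then one $-$ edge onto the middle $-$, then the remaining $-$ edges onto the trailing $*^b$ block; this is possible precisely because $N\ge a+1+b$, i.e. because $a+b\le n\ll N$. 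Equivalently, one can argue word-combinatorially that $*^a-*^b$ is obtained from $(-)^N$ by the order relation since $-\ge *$ and $-\ge -$ and we have enough letters.

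For part (2): write $\widehat{x}\,[x,y]$ and note that by Lemma~\ref{lemma-w}(2) and the definition, $[x,y]\ge x$ (and $[x,y]\ge y$), while $\widehat{x}$ is a long block of $(-)^N$ or $(+)^N$. The single letter $z\in\{+,-,*\}$ satisfies $z\le +$ or $z\le -$; combined with $*^a$ being short, $*^a z$ embeds (in the $\le$ sense, using Lemma~\ref{lemma-utilities}(3),(7)) into $\widehat{x}$ alone when $z$ matches the sign of $\widehat{x}$, and into $\widehat{x}[x,y]$ in general — one uses the long monochromatic block $\widehat{x}$ to absorb $*^a$ and then either the last letter of $\widehat x$ or the first letter of $[x,y]$ to cover $z$. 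Here I would split on whether $z$ is comparable to the sign of $\widehat x$; the worst case (e.g. $\widehat x = m$ but $z=+$) is handled because $[x,y]\ge x = +$ contributes a $+$. Part (3) is the mirror image of (2): apply the involution $\overline{\,\cdot\,}$, use $\overline{[y,x]\widehat{x}}=\overline{\widehat x}\,\overline{[y,x]}$ and $\overline{[y,x]}=[\bar y,\bar x]$ up to the comparability bookkeeping, and Lemma~\ref{lemma-utilities}(4) turns (3) into an instance of (2).

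Part (4) is the one genuine non-inequality and the main obstacle, since we must show something is \emph{not} below $[y,z]\,\widehat z$. The key point is Lemma~\ref{lemma-w}(2): $[y,z]\not\ge yz$, i.e. a single $[y,z]$ block cannot already cover the two-letter word $yz$; and $\widehat z$ contributes no help toward the first letter $y$ because, by Lemma~\ref{lemma-w}(1), $\widehat z\not\ge z$ — in fact $\widehat z$ is a block of the \emph{opposite} sign to $z$ (or empty if $z=*$), so any homomorphism witnessing $yz\le [y,z]\widehat z$ would have to realise the letter $z$ strictly inside the $[y,z]$ part (the $\widehat z$ suffix can only be traversed using letters $\le$ its sign, which is $\overline z$, and cannot produce a $z$ at the end). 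Then the initial letter $y$ and that internal $z$ would both have to be realised within $[y,z]$, contradicting $[y,z]\not\ge yz$. I would make this precise by taking a hypothetical end-point preserving homomorphism $f:P([y,z]\widehat z)\twoheadrightarrow P(yz)$, locating the vertex of $P(yz)$ that is the image of the boundary between the $[y,z]$ block and the $\widehat z$ block, and showing that on the $[y,z]$ portion we already get $[y,z]\ge yz$ (if the boundary maps to the far endpoint) or else the last edge $z$ of $P(yz)$ is covered by an edge of $\widehat z$, which is impossible since that edge has the wrong orientation when $z\neq *$ and $\widehat z=\epsilon$ when $z=*$. This case analysis, driven entirely by Lemma~\ref{lemma-w}, gives the contradiction.
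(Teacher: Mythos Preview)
Your proof is correct and follows essentially the same approach as the paper: parts (1)--(3) are handled exactly as the paper does (the paper's (2) also splits on whether $z\in\{*,\overline{x}\}$ versus $z=x$, and (3) is just the mirror of (2)), and for (4) the paper simply says ``an easy case-by-case analysis'' while you give a uniform argument via the boundary vertex and the orientation of $\widehat{z}$ --- this is a tidy way to organize the same case analysis, not a different idea. One small slip: in your duality reduction for (3) you write $\overline{[y,x]}=[\bar y,\bar x]$, but the correct identity is $\overline{[y,x]}=[\overline{x},\overline{y}]$ (check the incomparable case); this doesn't affect the argument since you only need that the barred statement is an instance of (2).
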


\begin{proof} (1) is immediate by the definitions. (2) follows from (1) if $z$ is $*$ or $\overline{x}$; otherwise $z=x$, and then notice that $*^a \leq \widehat{x}$ and $x \leq [x,y]$. (3) is identical to (2). An easy case-by-case analysis yields (4). 
\end{proof}

%%%{The 3rd  item of the next lemma is there as a pseudo-absorption. Not as nice, but formally correct. }

\begin{lemma}  \label{lemma-david-1-5-a} Let $X=x_1 \dots x_r$ be a non-empty word on $\cA$, and write $X = x_1A = Bx_r$. 
\begin{enumerate}
\item $W(X) \geq A$, $W(X) \geq B$, but $W(X) \not\geq X$; 
\item $W_{N+1}(X) \geq W_N(X)$;
\item  If $x_1 \in \{+,-\}$ then $W_{N+1}(X) \geq \overline{x_1}  \, W_N(X)$; if $x_r \in \{+,-\}$ then $W_{N+1}(X) \geq W_N(X) \, \overline{x_r}$. 

\end{enumerate}

\end{lemma}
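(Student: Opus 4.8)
The plan rests on the recursion $W_N(X)=\widehat{x_1}\,[x_1,x_2]\,W_N(x_2\cdots x_r)$, valid for $r=|X|\ge 2$ (with $W_N(X)=\widehat{x_1}$ when $r=1$), which is immediate from the definition of $W_N$. I would handle the three parts in that order, with part~(1) carrying nearly all the work. For the two positive inequalities in~(1): since $[x_i,x_{i+1}]\ge x_{i+1}$ and $[x_i,x_{i+1}]\ge x_i$ by Lemma~\ref{lemma-w}(2), and each $\widehat{x_i}\ge\epsilon$ (the empty word is a subword of every word, so Lemma~\ref{lemma-utilities}(3) applies), applying Lemma~\ref{lemma-utilities}(7) blockwise to $W(X)=\widehat{x_1}[x_1,x_2]\widehat{x_2}[x_2,x_3]\cdots[x_{r-1},x_r]\widehat{x_r}$ — matching each $\widehat{x_i}$ with $\epsilon$ and each $[x_i,x_{i+1}]$ with $x_{i+1}$ — gives $W(X)\ge x_2\cdots x_r=A$, and matching $[x_i,x_{i+1}]$ with $x_i$ instead gives $W(X)\ge x_1\cdots x_{r-1}=B$.

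The inequality $W(X)\not\ge X$ is the crux, and I would prove it by induction on $r$. The base case $r=1$ is Lemma~\ref{lemma-w}(1). For $r\ge 2$ put $X'=x_2\cdots x_r$, so $W(X)=\widehat{x_1}[x_1,x_2]W(X')$ and, by induction, $W(X')\not\ge X'$. Assume for contradiction $W(X)\ge X=x_1X'$; the idea is to peel off the prefix $\widehat{x_1}[x_1,x_2]$ using Lemma~\ref{lemma-utilities}(5) and arrive at $W(X')\ge X'$. If $x_1=*$, then $\widehat{x_1}=\epsilon$ and $[x_1,x_2]=x_2$ is one letter, so $x_2\,W(X')\ge *X'$ gives $W(X')\ge X'$ by the first clause of Lemma~\ref{lemma-utilities}(5). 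If $x_1=+$, then $\widehat{x_1}=(-)^N$; since $-\ngeq+$, the second clause of Lemma~\ref{lemma-utilities}(5) applied $N$ times strips the leading $(-)^N$ to leave $[+,x_2]\,W(X')\ge +X'$, and then $[+,x_2]$ is the single letter $+$ if $x_2\in\{+,*\}$ and is $-+$ if $x_2=-$ (whose leading $-$ is stripped by one more application of the second clause), so in each case $+\,W(X')\ge+X'$ and hence $W(X')\ge X'$. The case $x_1=-$ is symmetric. Every case contradicts the induction hypothesis, so $W(X)\not\ge X$.

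For parts~(2) and~(3), write $\widehat{x}^{(N)}$ for the block $\widehat{x}$ formed with parameter $N$. Then $\widehat{x}^{(N)}$ is a subword of $\widehat{x}^{(N+1)}$ (both equal $\epsilon$ when $x=*$), so $\widehat{x}^{(N)}\le\widehat{x}^{(N+1)}$ by Lemma~\ref{lemma-utilities}(3); since $W_{N+1}(X)$ and $W_N(X)$ agree outside these blocks, a blockwise application of Lemma~\ref{lemma-utilities}(7) gives $W_N(X)\le W_{N+1}(X)$, which is~(2). For~(3) with $x_1=+$ (the case $x_1=-$ is symmetric, and $x_r\in\{+,-\}$ is handled the same way at the other end), the first block of $W_{N+1}(X)$ is $\widehat{x_1}^{(N+1)}=(-)^{N+1}=\overline{x_1}\,(-)^N=\overline{x_1}\,\widehat{x_1}^{(N)}$, so $W_{N+1}(X)=\overline{x_1}\cdot Z$ with $Z=\widehat{x_1}^{(N)}[x_1,x_2]\widehat{x_2}^{(N+1)}\cdots\widehat{x_r}^{(N+1)}$; the blockwise argument of~(2) gives $Z\ge W_N(X)$, and prepending $\overline{x_1}$ to both sides via Lemma~\ref{lemma-utilities}(7) yields $W_{N+1}(X)\ge\overline{x_1}\,W_N(X)$.

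I expect the induction for $W(X)\not\ge X$ to be the main obstacle: no single step is deep, but the peeling must be arranged so that, across the cases $x_1\in\{*,+,-\}$ and the sub-cases on $x_2$ when $x_1$ is signed, one always ends up comparing $W(X')$ with $X'$ and can invoke the induction hypothesis; making Lemma~\ref{lemma-utilities}(5) do precisely that is the delicate bookkeeping.
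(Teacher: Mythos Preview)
Your proof is correct. The overall structure matches the paper's, but the inductive step for $W(X)\not\geq X$ is handled differently: the paper appends a letter on the \emph{right}, writing $W(Xx_{r+1})=W(X)\,[x_r,x_{r+1}]\,\widehat{x_{r+1}}$, and then uses the monotone-map statement Lemma~\ref{lemma-utilities}(1) together with Lemma~\ref{lemma-david-1-4}(4) (that $[y,z]\,\widehat{z}\not\geq yz$) to reach the contradiction in one stroke, with no case distinction on the letters. You instead peel from the \emph{left} and use only Lemma~\ref{lemma-utilities}(5) repeatedly, at the cost of a short case analysis on $x_1$ (and $x_2$ when $x_1$ is signed). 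Your route is a bit more elementary (it avoids Lemma~\ref{lemma-david-1-4}(4) entirely), while the paper's is slightly slicker once that auxiliary lemma is in hand. Parts~(2) and~(3) are handled the same way in both proofs.
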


\begin{proof} (1) The first two inequalities are immediate by Lemma \ref{lemma-w} (2). Now we prove that $W(X) \not\geq X$ by induction on $r$. If $r=1$ this is Lemma \ref{lemma-w} (1). Let $Y = X\, x_{r+1}$; then $W(Y) = W(X)\,[x_r,x_{r+1}]\,\widehat{x_{r+1}}$. Suppose for a contradiction that $W(Y) \geq Y$; by induction hypothesis, and using Lemma \ref{lemma-utilities} (1),  if $W(X) \not\geq X$ then we must have $[x_r,x_{r+1}]\,\widehat{x_{r+1}} \geq x_r \,x_{r+1}$, which contradicts Lemma \ref{lemma-david-1-4} (4). 

(2) is immediate. 

(3) Simply observe that $W_N(X)$ is of the form  $\widehat{x_1}\,Y\, \widehat{x_r}$ and that  $\overline{x}\widehat{x} = \overline{x}^{N+1}$ if $x \neq *$.  
\end{proof}

\begin{lemma}  \label{lemma-david-1-5-b} Let $X=x_1 \dots x_r$ be a non-empty word on $\cA$, let $A,B$ be any words on $\cA$ and let $z \in \cA$. 
\begin{enumerate}
\item $W(X*) = W(X)\,x_r$ and $W(*X) = x_1 \, W(X)$;
\item  $W(A\, *^k \, B) = W(A*)\, *^{k-1} \, W(*B)$ for all $k > 0$; 
\item $W_{N+1}(A*B) \geq W_N(AB)$; 
\item $W(A\,z\,B) \geq W(A*B)$;
\item If $A$ and $B$ are non-empty and $B \geq A$ then there exists $M \geq N$ such that \\$W_{M}(B)  \geq W_N(A) $. 
\end{enumerate}

\end{lemma}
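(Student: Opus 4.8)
Items (1), (2) and (4) are routine expansions of the definitions, (3) reduces to a one-line fact about words, and (5) --- the substantive part --- is proved by induction on $|B|-|A|$. For (1), the last two factors contributed to $W(X*)$ are $[x_r,*]\,\widehat{*}$, which equals $x_r$ since $x_r\geq *$ and $\widehat{*}=\epsilon$; hence $W(X*)=W(X)\,x_r$, and symmetrically $W(*X)=x_1\,W(X)$. For (2), in $W(A\,*^k\,B)$ the $k-1$ brackets occurring between consecutive copies of $*$ are each $[*,*]=*$, every hat of a $*$ is empty, and the two brackets flanking the block of $*$'s contribute the last letter of $A$ and the first letter of $B$ respectively; combined with (1), this gives the stated factorisation. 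For (4), parts (1)--(2) give $W(A*B)=W(A)\,a\,b\,W(B)$ and $W(AzB)=W(A)\,[a,z]\,\widehat{z}\,[z,b]\,W(B)$, where $a$ is the last letter of $A$ and $b$ the first letter of $B$; since $[a,z]\geq a$, $[z,b]\geq b$ and $\widehat{z}\geq\epsilon$ by Lemma~\ref{lemma-w}(2), Lemma~\ref{lemma-utilities}(7) gives $W(AzB)\geq W(A*B)$. In each of these the cases where $A$ or $B$ is empty are checked directly, using among other things that $*\leq W$ for every non-empty word $W$ (since $P(*)$ is the complete reflexive digraph on two vertices).

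For (3), again by (1)--(2) we have $W_{N+1}(A*B)=W_{N+1}(A)\,a\,b\,W_{N+1}(B)$ and $W_N(AB)=W_N(A)\,[a,b]\,W_N(B)$, with $a$, $b$ the last letter of $A$ and first letter of $B$. When $a,b\neq *$, Lemma~\ref{lemma-david-1-5-a}(3) gives $W_{N+1}(A)\geq W_N(A)\,\overline{a}$ and $W_{N+1}(B)\geq\overline{b}\,W_N(B)$, so by Lemma~\ref{lemma-utilities}(7) it is enough that $[a,b]$ be a subword of $\overline{a}\,a\,b\,\overline{b}$, and it is: the second letter if $a=b$, a prefix of $baba$ if $a\neq b$ (so that $\overline{a}=b$ and $\overline{b}=a$). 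When $a$ or $b$ equals $*$ the corresponding bracket simplifies and one only needs $W_{N+1}\geq W_N$ (Lemma~\ref{lemma-david-1-5-a}(2)) on that side; together with the empty cases these are easy variants.

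For (5), induct on $|B|-|A|$. If $|B|=|A|$, then a monotone surjective end-point-preserving homomorphism $P(B)\rightarrow P(A)$ --- which exists by Lemma~\ref{lemma-utilities}(1) --- is necessarily the identity on vertices, so every letter of $B$ dominates the corresponding letter of $A$; equivalently, $B$ is obtained from $A$ by turning some occurrences of $*$ into letters of $\{+,-\}$, and applying (4) once per such change yields $W_N(B)\geq W_N(A)$, so $M=N$ works. If $|B|>|A|$, pick such a homomorphism $f:P(B)\rightarrow P(A)$; since $|B|>|A|$, some pair of consecutive vertices $i,i+1$ satisfies $f(i)=f(i+1)$. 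Writing $B=B_1\,b'\,B_2$, where $b'$ labels that edge, and contracting the edge turns $f$ into a monotone surjective end-point-preserving homomorphism $P(B_1B_2)\rightarrow P(A)$; hence $B_1B_2\geq A$, the word $B_1B_2$ is non-empty, and $|B_1B_2|-|A|<|B|-|A|$, so by induction $W_{M'}(B_1B_2)\geq W_N(A)$ for some $M'\geq N$. If $b'=*$, then part (3) (applied with $N$ replaced by $M'$) gives $W_{M'+1}(B)=W_{M'+1}(B_1*B_2)\geq W_{M'}(B_1B_2)\geq W_N(A)$, and $M=M'+1$ works.

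It remains to treat $b'\in\{+,-\}$; this is the delicate case. Let $u$ (resp.\ $v$) be the last letter of $B_1$ (resp.\ the first letter of $B_2$), so that $W_M(B)=W_M(B_1)\,[u,b']\,\widehat{b'}\,[b',v]\,W_M(B_2)$ while $W_{M'}(B_1B_2)=W_{M'}(B_1)\,[u,v]\,W_{M'}(B_2)$. The device is to spend one symbol of slack on each side: for $M\geq M'+1$, Lemma~\ref{lemma-david-1-5-a}(2)--(3) and transitivity of $\geq$ give $W_M(B_1)\geq W_{M'}(B_1)\,\overline{u}$ when $u\neq *$, and $W_M(B_2)\geq\overline{v}\,W_{M'}(B_2)$ when $v\neq *$. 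Feeding these into Lemma~\ref{lemma-utilities}(7) reduces the claim $W_M(B)\geq W_{M'}(B_1B_2)$ to the single inequality $\overline{u}\,[u,b']\,\widehat{b'}\,[b',v]\,\overline{v}\geq[u,v]$; this holds because $u$ occurs inside $[u,b']$ and $v$ inside $[b',v]$ (all three of $u,v,b'$ being $\neq *$), whence $\overline{u}\,u\,v\,\overline{v}$ is a subword of the left-hand word and $\overline{u}\,u\,v\,\overline{v}\geq[u,v]$ exactly as in (3). The degenerate subcases --- $u$ or $v$ equal to $*$, or $B_1$ or $B_2$ empty --- only shorten $[u,v]$ and need no extra slack. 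Taking $M=M'+1$ then gives $W_M(B)\geq W_{M'}(B_1B_2)\geq W_N(A)$, closing the induction. The one genuinely delicate point is this last step: contracting a non-symmetric edge injects the useless block $\widehat{b'}=(\overline{b'})^{M}$ into $W_M(B)$, which must be folded away, and this is the only place where the index is forced to grow from $N$ to $M$ and where the bracket/hat combinatorics has to be examined case by case.
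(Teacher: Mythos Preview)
Your arguments for (1)--(4) are correct and essentially the same as the paper's. For (5) your proof is also correct, but you take a different route from the paper and make more work for yourself. The paper observes that $B\geq A$ means $B$ contains a subword from which $A$ is obtained by turning some $+$'s and $-$'s into $*$'s; it then constructs an intermediate word $D$ obtained from $B$ by replacing enough $+$'s and $-$'s by $*$'s so that $D$ consists of the subword $A$ padded only with $*$'s. One application of (4) per replaced letter gives $W_N(B)\geq W_N(D)$, and then repeated use of (3) strips the padding $*$'s to reach $W_M(D)\geq W_N(A)$. In other words, the paper front-loads all uses of (4) and then only ever has to contract $*$-edges, so the case $b'\in\{+,-\}$ never arises.

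Your induction on $|B|-|A|$ contracts an arbitrary edge, forcing you to treat the case $b'\in\{+,-\}$ directly. Your handling of that case is right --- the key inequality $\overline{u}\,[u,b']\,\widehat{b'}\,[b',v]\,\overline{v}\geq [u,v]$ does hold for the reason you give, and the degenerate subcases with $B_1$ or $B_2$ empty follow simply because $W_{M'}(B_1B_2)$ is then a subword of $W_M(B)$ --- but you could have avoided this entire analysis by noting that whenever a monotone surjection $P(B)\to P(A)$ collapses a non-$*$ edge, one can instead first replace that edge's label by $*$ (still a homomorphism onto $P(A)$, costing one use of (4) with no change in $N$) and then collapse the resulting $*$-edge via (3). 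That is precisely the paper's trick, and it makes the ``delicate case'' disappear.
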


\begin{proof} (1) is immediate. We prove (2) by induction on $k$. Suppose first that $k=1$: if $A$ or $B$ is empty the result is trivial. Otherwise, write $A=Ca$ and $B=bD$ and then 
$$W(A*B) =  W(A)[a,*]\, \widehat{*}\, [*,b]W(B) = W(A)\,ab\,W(B) = W(A*)W(*B)$$
the last equality following from (1).  Now suppose the result holds for $k$. Then using the result for $k=1$  we get
$$W(A*^{k+1}B) =  W(A*\,(*^{k}B))= W(A*)W(*^{k+1}B) $$
and using the  induction hypothesis we conclude that this is
$$ W(A*)W(*\,*^{k}B) = W(A*)W(**)*^{k-1}W(*B)=W(A*)*^{k-1}W(*B).$$

(3) By (2) we have that $W_{N+1}(A*B) = W_{N+1}(A*)W_{N+1}(*B)$. If $A$ or $B$ is empty the inequality is easy, so assume that $A=Ca$ and $B=bD$ for some $a,b \in \cA$. 
Then by (1) we get that $W_{N+1}(A*B) = W_{N+1}(A)\,ab\,W_{N+1}(B)$; and directly from the definition we have that $W_N(AB) = W_N(A)[a,b]W_N(B)$. If $a$ and $b$ are comparable, then the inequality follows from Lemma  \ref{lemma-david-1-5-a} (2). Otherwise, $W_N(AB) = W_N(A)[a,b]W_N(B) = W_N(A)\, ba \, W_N(B) = W_N(A)\, \overline{a}\overline{b} \, W_N(B)$ and the inequality now follows from Lemma \ref{lemma-david-1-5-a} (3). 

(4) If $A$ or $B$ is empty the result is easy, so assume that $A=Ca$ and $B=bD$ for some $a,b \in \cA$. Then $W(AzB) = W(A)[a,z]\,\widehat{z}\,[z,b]W(B) \geq W(A)\,ab\,W(B) = W(A*B)$ by (1), (2) and Lemma \ref{lemma-w} (2). 

(5) It is not hard to see, using Lemma \ref{lemma-utilities}, that $B \geq A$ precisely when $B$ contains a subword  from which $A$ can be obtained by replacing certain $+$ and $-$ by $*$. From this, it follows easily that there exists a word $D$ with the following properties: (i) $D$ is obtained from $B$ by replacing some $+$ and $-$ by $*$; (ii) $D$ contains $A$ as a subword; (iii) no symbol other than $*$ appears in $D$ outside the subword $A$. It follows from (4) that $W(B) \geq W(D)$. Now starting from $D$, repeatedly use (3), removing a $*$ at each step, until the subword $A$ is reached.

\end{proof}

\noindent{\bf Remark.} {\em In the following, given words $X,Y$ with $X \neq \epsilon$, we shall write $Y \leq W(X)$ to mean that there exists some $N$ such that $Y \leq W_N(X)$. Notice that this may lead to counter-intuitive statements such as $\widehat{x}\,W(X) \leq W(X)$ when $x$ is the first symbol of $X$ (see Lemma \ref{lemma-david-1-5-a} (3)). }

\begin{definition}  For a word $X$, let $X'$ denote the subword of $X$ obtained by deleting all the occurrences of $*$. \end{definition}

Notice that for any word $X$ we have that $\overline{X'} = {\overline{X}}'$. 

\begin{comment}
%\newpage

\bnote{Is this next decomposition  still used ?}

The next lemma is straightforward, and simply decomposes a word into substrings containing no occurrences of $*$. 

\begin{lemma} Let $X$ be a word  such that $X \neq X'$. Then there exists an integer $m > 0$, integers $k_i >0$ for $1 \leq i \leq m$,   words $X_i = X_i'$ for $1 \leq i \leq m+1$ where $X_i \neq \epsilon$ if $i \notin \{1,m+1\}$ such that   
$$X= X_1*^{k_1}X_2\dots  X_{m}*^{k_m}X_{m+1}.$$ 
We call this the {\em standard decomposition} of $X$. 
\end{lemma} \qed

\end{comment}

%\newpage

\begin{definition} Let $X, Y$ be non-empty words on $\cA$. A word $Z$  is a {\em shuffle} of $X$ and $Y$ if it has length $|X|+|Y|$ and contains $X$ and $Y$ as subwords. Denote by $\langle X,Y \rangle$ the set of all shuffles of $X$ and $Y$ that start with the first symbol of $X$ and end with the last symbol of $Y$. We denote by $\langle s, Y \rangle$ any set of the form $\langle *^a,Y \rangle$, and similarly for $\langle X,s \rangle$.  \end{definition}

\begin{lemma} \label{lemma-useful} Let $X$ be a non-empty word on $\{+,-\}$. 
\begin{enumerate}
\item If $Y \in \langle s, X \rangle$ then $Y \leq W(X*)$;
\item If $Y \in \langle X,s \rangle$ then $Y \leq W(*X)$.
\end{enumerate} \end{lemma}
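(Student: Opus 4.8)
The plan is to prove the two statements by a simultaneous induction on $|X|$, exploiting the recursive structure of both the words $W(X*)$, $W(*X)$ and of the shuffle sets $\langle s,X\rangle$, $\langle X,s\rangle$. By the duality identities of Lemma~\ref{lemma-utilities}(4) together with the facts $\overline{X'}={\overline X}'$ and $\overline{W(Y)}=W(\overline Y)$ (the latter being immediate from the definition of $W_N$, since $\overline{[x,y]}=[\overline y,\overline x]$ and $\overline{\widehat x}=\widehat{\overline x}$), statement (2) for $X$ is equivalent to statement (1) for $\overline X$, so it suffices to prove (1).

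For (1), write $X = a\,X''$ with $a\in\{+,-\}$. First I would dispose of the base case $|X|=1$: here $\langle s,X\rangle=\langle *^a,x_1\rangle$ consists of the single shuffles obtained by inserting $a$ copies of $*$ before the symbol $x_1$ (the shuffle must end in $x_1$), i.e. words of the form $*^i\,x_1\,*^{a-i}$ wait---no; since the word must \emph{end} with the last symbol of $X$, actually the only constraint places $x_1$ last, so $Y=*^a x_1$, and $*^a x_1\le \widehat{*}[*,x_1]=*^a x_1\le W(x_1*)$ wait $W(x_1*)=W(x_1)x_1$... I would simply check $W(x_1*)=x_1\cdots$; in fact by Lemma~\ref{lemma-david-1-5-b}(1), $W(x_1*)=W(x_1)\,x_1$, and $W(x_1)=[x_1,?]$ is not defined for a one-letter word, so here $W(x_1)$ should be read via $W_N(x_1*)=[x_1,*]\widehat{*}=[x_1,*]=x_1\,(=\max)$... so $W(x_1*)=x_1 x_1$ and $*^a x_1\le x_1 x_1$ fails unless $a\le 1$. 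The resolution is that $N$ is large, so $W_N(x_1*)$ actually carries the $\widehat{\cdot}$ contributions only at interior letters; I would carefully recompute $W_N(x_1*)$ from the definition and confirm $*^a x_1\le W_N(x_1*)$ for $N\ge a$, which is the content of Lemma~\ref{lemma-david-1-4}(3) applied suitably.

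For the inductive step, given $Y\in\langle s,X\rangle$ with $X=aX''$, I would split $Y$ at the last position holding a $*$ coming from the $*^a$ block versus the structure of $X$: more usefully, decompose $X = A*^kB$ along its leading run of non-$*$ symbols, or peel off the first letter. The cleanest route is: since $Y$ contains $*^a$ and $X$ as disjoint subwords and ends with $x_r$, locate the prefix $Y_1$ of $Y$ up to (and including) the occurrence realising $x_1=a$; then $Y_1\in\langle s,a\rangle$-type and the remainder $Y_2$ lies in $\langle s,X''\rangle$ or $\langle X'',s\rangle$-type, and I would glue the two bounds using Lemma~\ref{lemma-utilities}(7) ($X\le A,\,Y\le B\Rightarrow XY\le AB$) together with Lemma~\ref{lemma-david-1-5-b}(1)--(2) to reassemble $W(X*)$ from $W(a*)$ and $W(x_1 X''*)$. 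The recursive identities I expect to need are $W(A*B)=W(A*)\,W(*B)$ (the $k=1$ case of Lemma~\ref{lemma-david-1-5-b}(2)) and $W(*X)=x_1 W(X)$, $W(X*)=W(X)x_r$ from Lemma~\ref{lemma-david-1-5-b}(1). The main obstacle I anticipate is bookkeeping the boundary symbols: a shuffle of $*^a$ with $X$ may interleave $*$'s both before, among, and after the letters of $X$, and matching each regime against the corresponding $\widehat{x_i}$, $[x_i,x_{i+1}]$ factors of $W(X*)$ (using Lemma~\ref{lemma-david-1-4}(1)--(3) to absorb blocks $*^a\overline{x}*^b\le\widehat x$ etc.) requires choosing the induction parameter and the split point carefully so the two halves land exactly in the shuffle sets the hypothesis covers. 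Once the split is set up correctly, each piece is a one- or two-line application of the already-established Lemmas~\ref{lemma-david-1-4} and~\ref{lemma-david-1-5-b}.
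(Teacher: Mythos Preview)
Your proposal is considerably more elaborate than necessary. The paper dispenses with induction entirely: since $X=x_1\cdots x_r$ is over $\{+,-\}$ and any $Y\in\langle *^a,X\rangle$ starts with $*$ and ends with $x_r$, one may write $Y=(*^{a_1}x_1)(*^{a_2}x_2)\cdots(*^{a_r}x_r)$. Lemma~\ref{lemma-david-1-4}(2) gives $*^{a_i}x_i\le\widehat{x_i}[x_i,x_{i+1}]$ for $i<r$ and $*^{a_r}x_r\le\widehat{x_r}[x_r,*]$; concatenating (Lemma~\ref{lemma-utilities}(7)) yields $Y\le W(X*)$ directly. Part~(2) is symmetric via Lemma~\ref{lemma-david-1-4}(3). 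You actually gesture at exactly this block-by-block matching in your last paragraph, but bury it under an induction that only creates difficulties.

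Two concrete problems with your writeup. First, the base-case confusion: for a single letter $x$ the definition gives $W_N(x)=\widehat{x}$ (there are no bracket factors when $r=1$), so $W_N(x*)=\widehat{x}\,[x,*]\,\widehat{*}=\widehat{x}\,x$, and $*^a x\le\widehat{x}\,x$ is immediate for $N$ large. Second, your inductive step has a gap: after splitting off $*^{a_1}x_1$, the tail $Y_2=*^{a_2}x_2\cdots x_r$ need not begin with $*$ (if $a_2=0$) nor end with $*$, so it lies in neither $\langle s,X''\rangle$ nor $\langle X'',s\rangle$, and the induction hypothesis as stated does not apply. This can be patched (prepend a $*$ to $Y_2$ and use $Y_2\le *Y_2$ via Lemma~\ref{lemma-utilities}(3)), but at that point the direct argument is both shorter and cleaner.
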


\begin{proof} (1) Write $X=x_1\dots x_r$. By Lemma \ref{lemma-david-1-4} (2),  
$*^ax_i \leq \widehat{x_i}[x_i,x_{i+1}]$ for all $1 \leq i < r$ and $*^ax_r \leq \widehat{x_r}[x_r,*]$; concatenating yields the desired inequality. The proof of (2) is the same, using Lemma \ref{lemma-david-1-4} (3) instead. 
\end{proof}

\begin{prop} \label{prop-david-1-6}  Let  $X,Y$ be  words on $\cA$ such that $Y' \not\geq X'$. Then $Y \leq W(X')$ and in particular $Y \leq W(X)$.  \end{prop}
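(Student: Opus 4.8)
\textbf{Proof plan for Proposition \ref{prop-david-1-6}.}

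The plan is to reduce the general statement to the case where both words are reduced, i.e. contain no $*$, by exploiting the machinery already set up in Lemmas \ref{lemma-david-1-5-a} and \ref{lemma-david-1-5-b}. The final clause ``in particular $Y \leq W(X)$'' follows from the main clause together with a comparison of $W(X')$ and $W(X)$: repeatedly applying Lemma \ref{lemma-david-1-5-b} (3) (which says $W_{N+1}(A*B) \geq W_N(AB)$) to reinsert, one at a time, all the $*$'s that were deleted from $X$ to form $X'$, we get $W(X) \geq W(X')$ after raising the index $N$ finitely often; hence $Y \leq W(X') \leq W(X)$. So the real content is the inequality $Y \leq W(X')$.

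For the main clause, first I would reduce $Y$ to $Y'$ as well. One direction of Lemma \ref{lemma-utilities} and the definition of $\leq$ show that $Y' \leq Y$ is false in general (deleting symbols gives a subword, and $Y' $ is a subword of $Y$, so actually $Y' \leq Y$ \emph{does} hold by Lemma \ref{lemma-utilities} (3)); thus it suffices to prove $Y \leq W(X')$ under the extra hypothesis that $Y$ itself has no $*$ — wait, that is the wrong direction. Instead: since $Y' \le Y$ is the wrong way, I would argue directly. The cleanest route is: it suffices to show $Y \leq W(X')$ when $Y$ is arbitrary but we bound $Y$ above by a shuffle of $*$-blocks with $Y'$. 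Concretely, any word $Y$ satisfies $Y \leq *^a Y' *^b$ for suitable $a,b \leq |Y|$? No — again $Y$ is a subword of $*^{|Y|}Y'$-type words only in the $\leq$ sense we want if $Y' \le $ something. Let me restate the intended mechanism: by Lemma \ref{lemma-david-1-4} (1), $*^a \overline{x} *^b \leq \widehat{x}$, and more generally the factors $\widehat{x_i}[x_i,x_{i+1}]$ of $W(X')$ dominate (in $\leq$) not just $x_i$ but also $*$'s and $\overline{x_i}$'s on either side (Lemma \ref{lemma-david-1-4} (2),(3)). So the strategy is to write $X' = x_1 \cdots x_r$ (all in $\{+,-\}$, possibly $X' = \epsilon$, handled trivially since then $Y' \not\geq \epsilon$ is impossible), and to distribute the symbols of $Y$ among the blocks of $W(X')$.

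The heart of the argument, and the step I expect to be the main obstacle, is the following combinatorial claim derived from the hypothesis $Y' \not\geq X'$: because $Y'$ does \emph{not} lie above $X'$ in the path order, $Y'$ cannot be ``threaded through'' $X'$ in the way an end-point preserving surjection onto $P(Y')$ would require; concretely, writing $Y' = y_1 \cdots y_s$, there is no way to split $Y'$ as $Y' = Y'_1 \cdots Y'_r$ into $r$ (possibly empty) pieces with each $Y'_j$ contained in $\{x_j\}$ ``extended by $\overline{x_j}$ and $*$'' — equivalently, $Y'$ has a subword-free relation to $X'$ that forces every maximal run in $Y'$ of a symbol $c$ to be ``absorbable'' by a single block $\widehat{x_j}[x_j,x_{j+1}]\widehat{x_{j+1}}$ of $W(X')$ (using that $N$ is huge, each $\widehat{x_j}$ supplies arbitrarily many copies of $\overline{x_j}$). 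I would make this precise by contraposition: if $Y$ could \emph{not} be placed below $W(X')$, then by Lemma \ref{lemma-extend}-type reasoning (or directly by unravelling the definition of $\leq$ and Lemma \ref{lemma-utilities} (1), taking a monotone end-point preserving surjection $P(W(X')) \twoheadrightarrow P(Y)$ and watching where it fails) one extracts a genuine end-point preserving surjection $P(X') \twoheadrightarrow P(Y')$, i.e. $Y' \geq X'$, contradicting the hypothesis. Assembling: choose $M$ large (Lemma \ref{lemma-david-1-5-b} (5) shows indices can always be boosted), use Lemma \ref{lemma-useful} and Lemma \ref{lemma-david-1-4} to dominate each block of $Y$ by the corresponding block of $W_M(X')$, concatenate via Lemma \ref{lemma-utilities} (7), and conclude $Y \leq W(X')$, whence $Y \leq W(X)$.
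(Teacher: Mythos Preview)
Your proposal correctly identifies the reduction $W(X') \leq W(X)$ (via Lemma \ref{lemma-david-1-5-b}) and lists the right tools (Lemmas \ref{lemma-david-1-4} and \ref{lemma-useful}), but the ``heart of the argument'' is never carried out. The contrapositive you propose --- that if $Y \not\leq W(X')$ one could extract an end-point preserving surjection witnessing $Y' \geq X'$ --- is asserted, not proved, and you have written the surjection in the wrong direction besides. Saying ``distribute the symbols of $Y$ among the blocks of $W(X')$'' is a wish, not an argument: you have not explained \emph{how} the hypothesis $Y' \not\geq X'$ produces such a distribution.

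The missing idea is a \emph{first-disagreement decomposition} together with an induction on $|Y'|-(|X'|-1)$. Write $X'=x_1\cdots x_r$ and $Y'=y_1\cdots y_k$. If $k<r$ the result is easy: each factor $*^a y_i$ of $Y$ fits under $\widehat{x_i}[x_i,x_{i+1}]$ by Lemma \ref{lemma-david-1-4} (2), with the remaining $*$'s absorbed by $\widehat{x_{k+1}}$. For $k\geq r$, observe that $Y'\not\geq X'$ means $X'$ is not a subword of $Y'$, so in particular $X'$ is not a prefix of $Y'$: there exist words $G,D,E$ and $x\neq y$ in $\{+,-\}$ with $X'=GxD$ and $Y'=GyE$; necessarily $y=\overline{x}$. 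The point is that this $y$, together with any surrounding $*$'s in $Y$, is absorbed by the block $\widehat{x}$ via Lemma \ref{lemma-david-1-4} (1). The common prefix $G$ is handled by Lemma \ref{lemma-useful} (1), giving $K\leq W(G*)$ for the corresponding piece $K$ of $Y$. What remains is a word $L$ with $L'=E$, and one checks $E\not\geq xD$ (otherwise $Y'=G\overline{x}E\geq G(xD)=X'$), so the induction hypothesis gives $L\leq W(xD)$. Concatenating and using $\widehat{x}W(xD)\leq W(xD)$ (Lemma \ref{lemma-david-1-5-a} (3)) and $W(G*)W(xD)\leq W(GxD)$ finishes the job.
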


\begin{proof} Since $W(X') \leq W(X)$ by Lemma \ref{lemma-david-1-5-b} (5) it suffices  to prove that $Y \leq W(X')$. 
Write $X' = x_1\dots x_r$ and $Y' = y_1 \dots y_k$. Suppose first that $k < r$. By Lemma \ref{lemma-david-1-4} (2) we have for any $a \leq n$ that $*^ay_i \leq \widehat{x_i}[x_i,x_{i+1}]$ for all $i$  and $*^a \leq \widehat{x_{k+1}}$; since $Y$ is obtained from $Y'$ by inserting $*$'s, it follows that $Y \leq W(X')$. 

Now we prove the result for $k \geq r-1$ by induction on $k-(r-1)$. If $k-(r-1)=0$  the previous case holds, so now assume that $k \geq r$ and the result holds for smaller values. Since $X'$  is not a prefix of $Y'$, there exists words $G$, $D$, $E$ and $x,y \in \cA$ distinct such that $X' = G x D$ and $Y' = G y E$; notice that $\overline{x} = y$ and $\overline{y} = x$. 

(i) Suppose first that $G = \epsilon$: then $*^a\, \overline{x}\, *^b \leq \widehat{x}$ for any $a+b \leq n$ by Lemma \ref{lemma-david-1-4} (1), and by induction hypothesis any word $Z$ with $Z' = E$ will satisfy $Z \leq W(X')$. 
Hence $Y = *^a\, \overline{x}\, *^b \, Z \leq \widehat{x} W(X') \leq W(X')$. 

(ii) Now suppose that $G$ is non-empty. Since $Y'=GyE$, $Y$ is of the form  $K *^ay*^bL$ with $K \in \langle s,G \rangle$ and $L'=E$. 
First we have that  $K \leq W(G*)$ by Lemma \ref{lemma-useful}; secondly $*^a \overline{x} *^b \leq \widehat{x}$ for all $a+b \leq n$, by Lemma  \ref{lemma-david-1-4} (1), and  thirdly, by induction hypothesis, we have that $L \leq W(xD)$ for any word $L$ such that $L' = E$.  Since $\widehat{x}W(xD) \leq W(xD)$, concatenating the above we get
$Y \leq W(G*) \widehat{x}W(xD) \leq W(G*)W(xD).$ By Lemma \ref{lemma-david-1-5-a} (1), if $z$ is the last symbol in G then 
$ W(G*)W(xD) = W(G)zW(xD) \leq W(G)[z,x]W(xD) = W(GxD) = W(X')$ and we are done. 

\end{proof}

The next result states that words $X$ and $Y$ such that $X'=Y'$  are comparable precisely when their stars align.  

\begin{lemma} \label{lemma-aligned} Let $X,Y$ be words on $\cA$ such that $X' = Y' = c_1\dots c_r$. Write $X = *^{x_0} c_1 \dots *^{x_{r-1}}c_r*^{x_r}$ and 
$Y = *^{y_0} c_1 \dots *^{y_{r-1}}c_r*^{y_r}$. Then $X \geq Y$ if and only if $x_i \geq y_i$ for all $0 \leq i \leq r$. \end{lemma}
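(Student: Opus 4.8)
The statement concerns two words $X,Y$ with the same ``backbone'' $X'=Y'=c_1\cdots c_r$ (each $c_i\in\{+,-\}$), written in the normal form $X=*^{x_0}c_1*^{x_1}\cdots c_r*^{x_r}$ and similarly for $Y$; the claim is that $X\geq Y$ iff $x_i\geq y_i$ coordinatewise. The plan is to treat the two directions separately, the backward implication being a straightforward concatenation argument and the forward implication requiring a closer look at end-point preserving monotone homomorphisms.

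\emph{Sufficiency.} Suppose $x_i\geq y_i$ for all $i$. Since $*^{x_i}\geq *^{y_i}$ for each $i$ (a longer block of symmetric edges surjects onto a shorter one, and $*\geq *$), and trivially $c_i\geq c_i$, I would invoke Lemma \ref{lemma-utilities}(7) repeatedly (or once, after breaking $X$ and $Y$ into the $2r+1$ matching pieces $*^{x_0},c_1,*^{x_1},\dots,c_r,*^{x_r}$ and $*^{y_0},c_1,\dots,*^{y_r}$) to conclude $X\geq Y$.

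\emph{Necessity.} Now suppose $X\geq Y$, i.e. there is an end-point preserving homomorphism $f:P(X)\twoheadrightarrow P(Y)$; by Lemma \ref{lemma-utilities}(1) we may take $f$ monotone. The key observation is that the non-symmetric arcs of $P(X)$ — there are exactly $r$ of them, coming from $c_1,\dots,c_r$ in order — must map, in order, to the $r$ non-symmetric arcs of $P(Y)$, which also come from $c_1,\dots,c_r$. Indeed, a symmetric arc of $P(X)$ can only map to a symmetric arc or a vertex of $P(Y)$, and since $f$ is monotone and onto, each non-symmetric arc (say the one labelled $c_j$, going ``$+$'' or ``$-$'' according to $c_j$) cannot be collapsed to a point and cannot be sent to an oppositely-oriented arc, so it covers the $j$-th non-symmetric arc of $P(Y)$; monotonicity forces the matching to be order-preserving, hence the identity matching. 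This means that if we let $u_j$ be the tail of the $j$-th non-symmetric arc of $P(X)$ and $v_j$ its image (the tail of the $j$-th non-symmetric arc of $P(Y)$), then $f$ restricts to a monotone end-point preserving surjection from the subpath of $P(X)$ strictly between the $(j{-}1)$-st and $j$-th non-symmetric arcs — which is the symmetric path $P(*^{x_j})$, or $P(*^{x_0})$ for the initial segment and $P(*^{x_r})$ for the final one — onto the corresponding symmetric subpath $P(*^{y_j})$ of $P(Y)$. A surjective homomorphism between paths can only shrink length, so $x_j\geq y_j$ for every $j$, completing the proof.

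\emph{Anticipated obstacle.} The only delicate point is making the ``in order, identity matching'' claim in the necessity direction fully rigorous: one must argue carefully that under a monotone onto homomorphism the images of the non-symmetric arcs of $P(X)$ are pairwise distinct non-symmetric arcs of $P(Y)$, occurring in the same left-to-right order, with no reversals of orientation and no collapses. I expect this to follow cleanly from monotonicity together with the fact that the only way $x\,c_j\,y$ can fail when $c_j\in\{+,-\}$ is... — more precisely, from the observation that a monotone homomorphism either collapses an arc or sends $(i,i{+}1)$ to $(f(i),f(i{+}1))$ with $f(i)\le f(i{+}1)$, so a ``$+$''-arc maps to a ``$+$''-arc or a point and a ``$-$''-arc to a ``$-$''-arc or a point, and surjectivity onto a path with exactly $r$ non-symmetric arcs then forces each of the $r$ non-symmetric arcs of $P(X)$ to hit a distinct one. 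Everything else is routine bookkeeping with the normal form.
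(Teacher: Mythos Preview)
Your proof is correct and follows essentially the same approach as the paper: both directions hinge on the observation that under a monotone end-point preserving homomorphism a symmetric edge can only map to a symmetric edge (or collapse), so the $r$ non-symmetric edges of $P(Y)$ must be hit by the $r$ non-symmetric edges of $P(X)$, forcing the identity matching and hence the star-block inequalities. One minor imprecision: in your anticipated-obstacle paragraph you write that a ``$+$''-arc maps to a ``$+$''-arc or a point, but it could also map to a ``$*$''-arc; this does not affect the argument, since the counting step only needs that \emph{stars} cannot cover non-stars, which you state correctly earlier.
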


\begin{proof} Clearly if $x_i \geq y_i$ for all $i$ we have $X \geq Y$. For the other direction suppose that $X \geq Y$: by Lemma \ref{lemma-utilities} (1) and the fact that an edge-preserving map cannot send a star to a non-star, it is easy to see that the monotone map from $X$ onto $Y$ restricted to $X'$ is an isomorphism onto $Y'$, and hence must preserve the intervals of $*$'s; the result follows immediately.  
\end{proof}

\begin{prop} \label{prop-david-1-7}
 Let  $X$ be a word on $\cA$ with $X'$ is non-empty. If $Y$ is a word on $\cA$ such that $Y' = X'$  but $Y \not\geq X$ then  $Y \leq W(X)$.
\end{prop}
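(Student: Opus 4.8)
The plan is to reduce to the "stars-aligned" situation of Lemma \ref{lemma-aligned} and then exploit the slack that $W(X)$ provides in each block of stars. Write $X' = c_1 \cdots c_r$ (non-empty), and write $X = *^{x_0} c_1 *^{x_1} \cdots *^{x_{r-1}} c_r *^{x_r}$. Since $Y' = X'$, we may likewise write $Y = *^{y_0} c_1 *^{y_1} \cdots *^{y_{r-1}} c_r *^{y_r}$. By Lemma \ref{lemma-aligned}, the hypothesis $Y \not\geq X$ means there is some index $j$ with $y_j < x_j$, i.e. $y_j \le x_j - 1$ (recall the $x_i, y_i$ are just non-negative integers here, but the point is strict inequality in the star-count at position $j$). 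The idea is that $W_N(X)$, for $N$ large, contains in the corresponding location a block of stars of length at least $N$ — far more than the $x_j$ stars of $X$ itself — so $Y$, which at that location asks for only $y_j < x_j \le N$ stars, should embed.

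First I would record the precise shape of $W(X)$ at a star block. Using Lemma \ref{lemma-david-1-5-b} (1) and (2) repeatedly, $W(X)$ decomposes along the stars of $X$: each maximal block $*^{x_i}$ of $X$ (with $x_i \ge 1$) contributes a block of at least $*^{x_i - 1}$ literal stars in $W(X)$ flanked by the $W(\cdot *)$ and $W(* \cdot)$ pieces coming from the non-star neighbours; and — this is the useful asymmetry — when we pass from $W_N$ to $W_{N+1}$ via Lemma \ref{lemma-david-1-5-a} (2), each such interior star block grows. More importantly, a star block in $X$ that is adjacent to a non-star letter $c$ picks up, in $W(X)$, the contribution $\widehat{c}$, which is $*^{M}$-length worth of slack in the sense of Lemma \ref{lemma-david-1-4}: the inequalities $*^a \overline{x} *^b \le \widehat{x}$ and $*^a z \le \widehat{x}[x,y]$ are exactly what let us "absorb" extra stars. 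So the structural claim I want is: for $N$ large, $W_N(X) \ge Y$ whenever $Y' = X'$ and $Y$ has strictly fewer stars than $X$ at some position.

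The cleanest execution is by induction on $r = |X'|$, peeling off $c_1$ and its leading star block. If the deficit occurs at position $j \ge 1$, apply the induction hypothesis to the suffix $X'' = *^{x_1} c_2 \cdots c_r *^{x_r}$ (which has $X''{}' = c_2 \cdots c_r$ of length $r-1$) and the corresponding suffix $Y''$ of $Y$, getting $Y'' \le W(X'')$; then prepend, using Lemma \ref{lemma-david-1-5-b} (1) — that $W(X) = W(*^{x_0} c_1 X'')$ has $W(c_1 X'')$, hence a fortiori $W(X'')$, as a "$\ge$" tail, together with $*^{y_0} \le *^{x_0}$ giving the leading stars — to conclude $Y \le W(X)$. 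If instead the deficit is at $j = 0$ (i.e. $y_0 < x_0$, so in particular $x_0 \ge 1$), then $c_1$ sits next to the star block $*^{x_0}$ in $X$, so $W(X)$ contains $\widehat{c_1}$ (when $c_1 \ne *$, which holds since $c_1$ is a letter of $X'$) as slack to the left of the $W(c_1 \cdots)$ part; combining $*^{y_0} c_1 *^{(\text{anything} \le n)} \le \widehat{c_1}[c_1, c_2]$ from Lemma \ref{lemma-david-1-4} (2) with $Y''' \le W(\text{rest})$ from the previous paragraph's suffix argument (no deficit needed there, just the base inequalities) finishes it. I expect the main obstacle to be purely bookkeeping: keeping the $N$-dependence honest (each use of Lemma \ref{lemma-david-1-5-a} (2) or Lemma \ref{lemma-david-1-5-b} (3)/(5) may bump $N$, so one fixes $N$ larger than $n$ plus the number of star blocks and argues once), and handling the boundary cases where $x_0 = 0$ or $x_r = 0$ or some interior $x_i = 0$ (no star block there, so there is simply nothing to absorb and the induction passes through trivially, since then $y_i \le x_i$ forces $y_i = 0$ too). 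No genuinely new idea beyond Lemmas \ref{lemma-david-1-4}, \ref{lemma-david-1-5-a}, \ref{lemma-david-1-5-b} and \ref{lemma-aligned} should be required.
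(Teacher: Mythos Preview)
Your overall plan---locate a star-deficit index via Lemma~\ref{lemma-aligned} and exploit the slack coming from the $\widehat{c}$'s in $W(X)$---is the right one, but the inductive execution has a concrete error in the $j\ge 1$ case. You assert ``$*^{y_0}\le *^{x_0}$'' for the prepend step, but nothing guarantees $y_0\le x_0$: the deficit is at position $j\ge 1$, and $Y$ may well carry \emph{more} stars than $X$ at position $0$. The prepend can be repaired, because $W(X)$ has the shape $*^{x_0-1}c_1\,\widehat{c_1}\,[c_1,\cdot]\,W(X'')$ and $*^{y_0}c_1\le \widehat{c_1}[c_1,\cdot]$ by Lemma~\ref{lemma-david-1-4}(2) regardless of $y_0$; but as written the argument is incorrect, and the repair is exactly the $\widehat{c_1}$-absorption you reserve for the $j=0$ branch. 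Note also that your $j=0$ claim ``$*^{y_0}c_1*^{(\text{anything}\le n)}\le \widehat{c_1}[c_1,c_2]$'' overstates Lemma~\ref{lemma-david-1-4}(2): that lemma gives $*^a z\le \widehat{x}[x,y]$ with no trailing stars, so the tail of $Y$ still needs a separate bound, which your ``base inequalities'' remark does not supply.

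The paper's proof avoids the induction altogether. It splits once at the deficit position, writing $X=X_g*^aX_d$, $Y=Y_g*^bY_d$ with $a>b$, and then invokes Lemma~\ref{lemma-useful} (the shuffle bound, which you do not use) to get $Y_g\le W(X'_g*)$ and $Y_d\le W(*X'_d)$ in one stroke; concatenation via Lemma~\ref{lemma-david-1-5-b}(2),(5) gives $Y\le W(X'_g*^aX'_d)\le W(X)$. Your induction would, once corrected, essentially re-derive Lemma~\ref{lemma-useful} block by block, and the ``bookkeeping'' you anticipate is precisely what that lemma packages.
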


\begin{proof}  By Lemma \ref{lemma-aligned}, if $X'=Y'$ but $Y \not\geq X$ then the following holds: \\

\noindent{\bf Claim.} {\em There exists words $X_g, Y_g, X_d, Y_d$ and integers $a > b$ such that 
$X = X_g *^a X_d$ and $Y=Y_g *^b Y_d$ where 
 $X_g,Y_g \in \langle s,X'_g \rangle$ or are both empty, and
 $X_d,Y_d  \in \langle X'_d,s \rangle$ or are both empty. 
}

%\noindent{\em Proof of Claim.} We use induction on $|X'|$. If $|X'|=1$ then $X = *^{u_1} x *^{u_2}$ and $Y = *^{v_1} x *^{v_2}$ for some $x \in \{+,-\}$; since $Y \not\geq X$ %we must have $u_1 > v_1$ or $u_2 > v_2$ and the claim holds. Now suppose that $|X'| \geq 2$. Then $X=X_1 *^{u_1} x\, *^{u_2}$ and $Y=Y_1 *^{v_1} x \,*^{v_2}$ for some %words $X_1, Y_1$ with $X'_1 = Y'_1 \neq \epsilon$ and $x \in \{+,-\}$; furthermore we can assume neither of $X_1,Y_1$ ends with a $*$. If $u_1 > v_1$ or $u_2 > v_2$ the c%laim holds; otherwise $Y \not\geq X$ implies $Y_1 \not\geq X_1$ and we can apply the induction hypothesis; the claim follows easily. 

Since $X' \neq \epsilon$, at least one of $X'_g$ or $X'_d$ is non-empty: suppose without loss of generality that $X'_d \neq \epsilon$. Then by Lemma \ref{lemma-useful} we have that $Y_d \leq W(*X'_d)$. Suppose first that $X'_g$ is empty. Then 
$Y = *^b\, Y_d \leq *^{a-1}W(*X'_d) \leq *^{a-1}W(*X_d) = W(*)*^{a-1}W(*X_d) = W(*^aX_d)=W(X)$ by Lemma \ref{lemma-david-1-5-b} (2) and (5). On the other hand if $X'_g$ is non-empty, then  by Lemma \ref{lemma-useful} $Y_g \leq W(X'_g*)$, so 
$Y = Y_g\,*^b\, Y_d \leq W(X'_g*)\,*^{a-1}W(*X'_d) = W(X'_g\,*^a \, X'_d) \leq W(X_g *^a X_d) = W(X)$ by Lemma \ref{lemma-david-1-5-b} (2) and (5).

 \end{proof}
 
 Let $\bbG$ be an $n$-cycle that fails the path condition. By Theorem \ref{theorem-surj-path}  there exists a subpath $\bbP$ of $\bbG$ of length $n-1$ and a subpath $\bbQ$ of $\bbG$ of length $n-2$ such that $\bbQ \not\leq W(\bbP)$. The next result uses the previous two propositions to give a sharper description of such cycles.

 \begin{prop}\label{prop-david-1-8} Let $\bbG = C(\bbP\,e) = C(\bbQ\,xy)$ be an $n$-cycle, where $\bbP, \bbQ$ are words on $\cA$ and $x,y,e \in \cA$. If $\bbQ \not\leq W(\bbP)$ then 
 (i)  $xy = **$, (ii) $e \in \{+,-\}$, and (iii) there exist words $A,B$ such that $\bbP'=AB$ and either $\bbQ' = AeB$ or $\bbQ'=A\overline{e}B$.

 \end{prop}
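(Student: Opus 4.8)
The plan is to exploit the two contrapositive statements provided by Propositions \ref{prop-david-1-6} and \ref{prop-david-1-7}. The hypothesis $\bbQ \not\leq W(\bbP)$ rules out both conclusions of those results: by Proposition \ref{prop-david-1-6}, if $\bbQ' \not\geq \bbP'$ then $\bbQ \leq W(\bbP')\leq W(\bbP)$ (assuming $\bbP'\neq\epsilon$, a case I will handle separately), contradiction; hence we must have $\bbQ' \geq \bbP'$. Similarly, by Proposition \ref{prop-david-1-7}, if $\bbQ' = \bbP'$ and $\bbQ\not\geq\bbP$ then $\bbQ\leq W(\bbP)$, contradiction. So I first argue that $\bbQ'$ strictly dominates $\bbP'$ in a suitable sense, i.e. $\bbQ'\geq \bbP'$ and $\bbQ' \neq \bbP'$ (the equality case being excluded unless $\bbQ \geq \bbP$, which I will also rule out using a length/embedding argument, since $\bbP$ has length $n-1$ and $\bbQ$ has length $n-2$ inside the same $n$-cycle).

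Next I would compare lengths. Since $\bbG$ is an $n$-cycle written as $C(\bbP e) = C(\bbQ xy)$, we have $|\bbP| = n-1$ and $|\bbQ| = n-2$, so $|\bbP| = |\bbQ| + 1$. Passing to starless subwords, $|\bbP'| = |\bbQ'| + (\text{number of }* \text{ in }\bbP) - (\text{number of }* \text{ in }\bbQ) \pm \cdots$; more directly, the word for the whole cycle $\bbG$ is obtained from $\bbP e$ by appending one symbol, and from $\bbQ xy$ by appending two, so $|\bbP| + 1 = |\bbQ| + 2 = n$. From $\bbQ' \geq \bbP'$ and Lemma \ref{lemma-utilities} (3) we get $|\bbP'| \leq |\bbQ'|$. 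On the other hand, deleting stars, $\bbP' e' = \bbG' = \bbQ' (xy)'$ up to the cyclic identification, where $e', (xy)'$ are the starless parts of the appended symbols. If $e \in \{+,-\}$ and $xy = **$ then $|\bbP'| = |\bbG'| - 1$ and $|\bbQ'| = |\bbG'|$, giving $|\bbP'| = |\bbQ'| - 1$, consistent with $\bbQ' \geq \bbP'$ being a covering-type relation; any other distribution of stars among $e, x, y$ forces $|\bbP'| \geq |\bbQ'|$ or worse, contradicting $\bbQ'\geq\bbP'$ together with $\bbQ'\neq\bbP'$. This pins down (i) $xy = **$ and (ii) $e \in \{+,-\}$.

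For (iii): now $\bbP'$ and $\bbQ'$ are both starless words with $\bbQ' \geq \bbP'$ and $|\bbQ'| = |\bbP'| + 1$, and moreover $\bbP' e = \bbQ'$ as cyclic words (since $xy = **$ contributes nothing to the starless reading and $e$ is the only extra non-star symbol), i.e. reading around the cycle, $\bbQ'$ is $\bbP'$ with a single extra symbol $e$ inserted at the "seam". But $\bbQ'\geq\bbP'$ means there is an endpoint-preserving homomorphism $P(\bbP') \to P(\bbQ')$... wait, the right direction: $\bbQ' \geq \bbP'$ means $P(\bbQ')$ maps onto $P(\bbP')$, which for starless words of lengths differing by one forces $\bbP'$ to be obtainable from $\bbQ'$ by contracting one edge; combined with the cyclic relation $\bbQ' = \bbP' e$ at the seam, this says $\bbQ'$ is $A\,e\,B$ or $A\,\overline{e}\,B$ where $\bbP' = AB$ — the two cases arising from whether the inserted symbol $e$ (or its reverse, depending on orientation of the seam reading) sits, and the contraction of a $+-$ or $-+$ pair in $\bbQ'$ down to $\epsilon$ in $\bbP'$ being what makes the map onto. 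The main obstacle I expect is bookkeeping the cyclic/orientation ambiguity cleanly — matching up which traversal of the seam gives $e$ versus $\overline{e}$, and making the "insert one symbol" statement precise as an equation $\bbP' = AB$, $\bbQ' \in \{AeB, A\overline e B\}$ — rather than any deep argument; once (i) and (ii) are established, (iii) is essentially Lemma \ref{lemma-aligned}-style reasoning at the level of starless words combined with Lemma \ref{lemma-utilities} (1).
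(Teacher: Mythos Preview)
Your overall approach matches the paper's: invoke Propositions~\ref{prop-david-1-6} and~\ref{prop-david-1-7} (together with $|\bbQ|<|\bbP|$ to exclude $\bbQ\geq\bbP$) to force $\bbQ'>\bbP'$ strictly, then count non-star symbols in $\bbG$ to obtain $e\in\{+,-\}$ and $xy=**$. That part is correct and is exactly what the paper does.

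For (iii) you are working much harder than necessary, and some of the reasoning is muddled. Once you know $\bbQ'\geq\bbP'$, $|\bbQ'|=|\bbP'|+1$, and both are words on $\{+,-\}$, the order $\leq$ on starless words is simply the subword order (a monotone endpoint-preserving surjection between antisymmetric paths whose lengths differ by one collapses exactly one edge). Hence $\bbQ'=AcB$ with $\bbP'=AB$ for a single symbol $c\in\{+,-\}$. Since $e\in\{+,-\}$, we have $\{e,\overline e\}=\{+,-\}$, so $c\in\{e,\overline e\}$ automatically. That is all (iii) asserts. The cyclic ``seam'' analysis is a red herring: knowing that $\bbQ'$ is a rotation of $\bbP'e$ (or of its reversal) does \emph{not} by itself give (iii), because for a generic rotation $BeA$ of $ABe$ the word $AB$ need not be obtainable by deleting one letter; it is really the inequality $\bbQ'>\bbP'$ from the two Propositions that does the work. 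Your remark about ``contraction of a $+-$ or $-+$ pair down to $\epsilon$'' is not what happens (that would drop the length by two, not one), and Lemma~\ref{lemma-aligned} plays no role here.

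Finally, the case $\bbP'=\epsilon$ needs no special treatment: then $\bbP=*^{\,n-1}$, so $W(\bbP)=*^{\,n-2}$, and every word of length $n-2$ is $\leq *^{\,n-2}$, contradicting the hypothesis $\bbQ\not\leq W(\bbP)$.
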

 
 \begin{proof} By Propositions \ref{prop-david-1-6} and \ref{prop-david-1-7} we must have that $\bbQ' > \bbP'$; in particular, $\bbQ$ must contain the edge $e$ outside $\bbP$, and $e \neq *$;  (iii) also follows.  Now $|\bbQ'| \leq |\bbG'| = |\bbP'|+1 \leq |\bbQ'|$ implies $xy = **$. 
 %Suppose first that $\bbQ$ is in the same direction as $\bbP$ along $\bbG$. Write $\bbG = UxyVe$ where $\bbP=UxyV$ and $\bbQ = VeU$. Then $\bbP'=U'V'$ and $                          %\bbQ'=V'eU'$; in particular $h(\bbQ')=h(\bbP')+h(e)$. Since $\bbQ' > \bbP'$, there exist words $A,B$ and $z \in \cA$ such that $\bbP'=AB$ and $\bbQ'=AzB$; then  $h(\bbQ') = %h(A)+h(B)+h(z) = h(\bbP')+h(z)$ and so $h(z)=h(e)$, i.e. $z=e$. If $\bbQ$ runs counter to $\bbP$, then \bnote{fix this: see handwritten proof} 
 \end{proof}
  
  For the remainder of this subsection, $\bbG$ is an $n$-cycle failing the path condition, as witnessed by a subpath $\bbP$ of $\bbG$ of length $n-1$; 
  $\bbQ$ is a subpath of $\bbG$ of length $n-2$ such that $\bbQ \not\leq W(\bbP)$.
  
  \begin{lemma} \label{lemma-decompo} There exist words $A,B,C,D,T,U$, $z \in \{+,-\}$ and an integer $k \geq 0$ such that $\bbP = ATB$ and $\bbQ=CUD$ with
  \begin{enumerate}
  \item $A'=C'$, $B'=D'$, $T'=z^k$, $U'=z^{k+1}$,
  \item if $A' \neq \epsilon$ then its last symbol is $\overline{z}$; if $B' \neq \epsilon$ then its first symbol is $\overline{z}$;
  \item $A,C$ do not end with a $*$; $B,D$ do not start with a $*$. 
  \end{enumerate} 
  \end{lemma}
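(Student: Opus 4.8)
The plan is to take the structural facts already established about $\bbG$, $\bbP$ and $\bbQ$ (via Proposition~\ref{prop-david-1-8} and the propositions preceding it) and repackage them into the explicit ``sandwich'' decompositions of the lemma. By Proposition~\ref{prop-david-1-8} we already know that, writing $\bbG = C(\bbP\,e) = C(\bbQ\,xy)$, we have $xy = **$, $e \in \{+,-\}$, and there are words $A,B$ with $\bbP' = AB$ and $\bbQ' \in \{AeB, A\overline{e}B\}$. So the content of the present lemma is essentially to (a) rename $e$ (or $\overline e$) as a single symbol $z \in \{+,-\}$ appearing in $\bbQ'$ but not $\bbP'$, (b) enlarge the ``split point'' in $\bbP'$ to absorb a maximal run of $z$'s on either side, producing the factors $T' = z^k$, $U' = z^{k+1}$, and (c) re-insert the $*$'s of $\bbP$ and $\bbQ$ around these primed factors to get the actual (unprimed) words, choosing the cut positions so that $A,C$ end in a non-$*$ and $B,D$ start with a non-$*$.

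Concretely, I would argue as follows. First apply Proposition~\ref{prop-david-1-8}: if $\bbQ' = AeB$ set $z = e$, and if $\bbQ' = A\overline{e}B$ set $z = \overline{e}$ (note $C(\bbQ\,**)$ is the same cycle read backwards in the second case, or one may just rename; in either case $z \in \{+,-\}$ and $\bbQ' = A z B$, $\bbP' = AB$ with the single extra occurrence of $z$ sitting between the $A$-part and the $B$-part). Now, inside $\bbQ'$, look at the maximal block of consecutive $z$'s containing this distinguished occurrence; write $\bbQ' = C' z^{k+1} D'$ where this block has length $k+1$ (with $k \ge 0$), $C'$ either is empty or ends in a symbol $\ne z$, and $D'$ either is empty or begins with a symbol $\ne z$. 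Since $\bbP'$ is obtained from $\bbQ'$ by deleting exactly that one distinguished $z$, we get $\bbP' = C' z^{k} D'$ with the same $C', D'$. Because $\bbP'$, $\bbQ'$ are words on $\{+,-\}$ (no $*$), a symbol $\ne z$ in $\{+,-\}$ is exactly $\overline z$; this gives condition (2). Condition (1) at the level of primed words is now immediate: $A' := C'$, $B' := D'$, $T' := z^k$, $U' := z^{k+1}$.

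It remains to lift from primed words to the genuine words. The word $\bbP$ is obtained from $\bbP' = C' z^k D'$ by inserting runs of $*$'s in the gaps (and at the ends); choose $A$ to be the prefix of $\bbP$ consisting of $C'$ together with all the $*$'s inserted up to and including the last symbol of $C'$ — i.e. cut $\bbP$ right after the last non-$*$ symbol of the $C'$-part — so $A$ does not end in $*$ and $A' = C'$; symmetrically let $B$ be the suffix of $\bbP$ starting at the first non-$*$ symbol of the $D'$-part, so $B$ does not begin with $*$ and $B' = D'$; then $T$ is whatever remains in the middle, a word over $\{z, *\}$ with $T' = z^k$, and $\bbP = ATB$. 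Do exactly the same on $\bbQ$ to obtain $C, U, D$ with $\bbQ = CUD$, $C' = C'(\text{of }\bbP) = A'$, etc.; note $U' = z^{k+1}$ and $U$ is a word over $\{z,*\}$. This gives (3) and completes (1)–(2). One small bookkeeping point: if $C'$ (resp. $D'$) is empty one simply takes $A = \epsilon$ (resp. $B = \epsilon$), which is consistent with ``does not end with a $*$''; the same convention applies on the $\bbQ$ side, and the fact that $A' = C'$ etc. forces the two sides to be empty or non-empty together.

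I do not expect a serious obstacle here — the lemma is a normalization statement and all the real work was done in Propositions~\ref{prop-david-1-6}, \ref{prop-david-1-7} and \ref{prop-david-1-8}. The only mildly delicate point is making sure the choice of cut positions is internally consistent across $\bbP$ and $\bbQ$: since $\bbP'$ and $\bbQ'$ share the same prefix $C'$ and suffix $D'$, and the $*$-insertions into $C'$ inside $\bbP$ versus inside $\bbQ$ are a priori unrelated, one must phrase the decomposition so that only the primed equalities $A' = C'$, $B' = D'$ are claimed (which is indeed all the lemma asserts), not any equality between $A$ and $C$ themselves. With that understood the verification is routine, and the convention handling empty factors is the only thing to state carefully.
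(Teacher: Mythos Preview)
Your proposal is correct and follows essentially the same approach as the paper. The paper also starts from Proposition~\ref{prop-david-1-8} to get $\bbP'=XY$, $\bbQ'=XzY$, then isolates the maximal $z$-block by first taking $X$ shortest possible (so it ends in $\overline{z}$ if non-empty) and then peeling off the longest suffix $W$ of $Y$ that starts with $\overline{z}$; this is equivalent to your ``take the maximal run of $z$'s containing the distinguished occurrence''. The paper then simply says ``it suffices to insert the stars in the appropriate places'', whereas you spell this step out carefully --- your treatment of the star-insertion and the empty-factor convention is in fact more explicit than the paper's.
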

  
  \begin{proof} By Proposition \ref{prop-david-1-8} we can find words $X,Y$ such that $\bbP'=XY$ and $\bbQ'=XzY$. Choose $X$ the shortest possible in such a decomposition; if it is non-empty then clearly it must end with $\overline{z}$. Now let $W$ be the longest suffix of $Y$ that starts with $\overline{z}$ (if no such word exits then $W=\epsilon$). Then clearly $\bbP'=Xz^kW$ and $\bbQ'=Xz^{k+1}W$ where $k \geq 0$. To complete the proof it suffices to insert the stars in the appropriate places. 
  \end{proof}
 
  \begin{prop} \label{prop-david-1-9} In the above decomposition,  $C \geq A$ and $D \geq B$. 
  \end{prop}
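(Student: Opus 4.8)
The plan is to prove $C\ge A$ and to deduce $D\ge B$ by duality: applying the same argument to $\overline{\bbP},\overline{\bbQ}$, which witness the failure of the path condition for the isomorphic cycle $C(\overline{w(\bbP)}\,\overline e)$, one uses $\overline{W_N(X)}=W_N(\overline X)$ together with the fact that Lemma~\ref{lemma-decompo} applied to $\overline{\bbP},\overline{\bbQ}$ puts $\overline B,\overline D,\overline z$ in the roles of $A,C,z$; so ``$C\ge A$ always'' and ``$D\ge B$ always'' are dual assertions.

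To prove $C\ge A$, I would assume $C\not\ge A$ and derive a contradiction by showing $\bbQ\le W(\bbP)$. First, since $A,C$ do not end with $*$ and $A'=C'$, the case $A'=\epsilon$ forces $A=C=\epsilon$, so $A'\ne\epsilon$; then Proposition~\ref{prop-david-1-7} gives $C\le W(A)$. Since $A$ ends with $\overline z$ (Lemma~\ref{lemma-decompo}(2)), $W_N(A)$ ends with the block $\widehat{\overline z}=z^{N}$, and as $|C|\le n\ll N$ a monotone surjection $P(W_N(A))\twoheadrightarrow P(C)$ sends a long suffix onto the terminal vertex of $P(C)$; re-routing finitely many of those trailing $z$-edges yields $C\,*^{\,j}\le W(A)$ for every $j\le n$. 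The crucial step is to establish the mirror statement at the other end: \emph{either $B'=\epsilon$ (hence $B=D=\epsilon$), or $B'\ne\epsilon$ and $D\not\ge B$}, in which case the symmetric argument (now $W_N(B)$ begins with $\widehat{\overline z}=z^{N}$) gives $*^{\,j}\,D\le W(B)$ for all $j\le n$. Here I would use the identity $\bbG=C(\bbP\,e)=C(\bbQ\,**)$ from Proposition~\ref{prop-david-1-8}: since $\bbQ'$ has exactly one more $z$ than $\bbP'$ while $e$ is the unique edge of $\bbG$ absent from $\bbP$, comparing numbers of $z$-edges forces $e=z$, so the cyclic word $\bbG$ equals \emph{both} $A\,T\,B\,z$ and $C\,U\,D\,{**}$; moreover every symmetric edge of $\bbG$ lies in $\bbP$. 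One then traces how these two cyclic readings of $\bbG$ must align around the $z$-run — using that $A$ ends with $\overline z$ and $B$ begins with $\overline z$ when nonempty, and locating the edge $e$ and the two symmetric edges deleted in passing from $\bbP$ to $\bbQ$ relative to that run. As $C\not\ge A$ already forces one of those two symmetric edges into the $A$/$C$ part, running through the short list of remaining configurations gives the claim about the $B$-end.

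Granting this, I would write $U=*^{u_0}\,z\,*^{u_1}\,z\cdots z\,*^{u_{k+1}}$ (so $U$ has exactly $k+1$ copies of $z$, all $u_i\le n$), use Lemma~\ref{lemma-david-1-5-b}(1),(2) to split $W(\bbP)=W(A)\cdot\rho\cdot W(B)$ where $\rho$ is the block generated by $T$ together with its two junctions with $A$ and $B$, and conclude
$$
\bbQ \;=\; C\,U\,D \;=\; \bigl(C\,*^{u_0}\bigr)\,\bigl(z\,*^{u_1}\,z\cdots z\,*^{u_k}\,z\bigr)\,\bigl(*^{u_{k+1}}\,D\bigr)\;\le\;W(A)\cdot\rho\cdot W(B)\;=\;W(\bbP),
$$
by Lemma~\ref{lemma-utilities}(7), using $C\,*^{u_0}\le W(A)$, $*^{u_{k+1}}\,D\le W(B)$ (or $B=D=\epsilon$), and the fact that the middle word $z\,*^{u_1}\cdots z\,*^{u_k}\,z$ embeds into $\rho$ — its $k+1$ copies of $z$ matching $z$-letters of $\rho$ that are separated by $\overline z^{\,N}$-blocks, the surplus letters of $\rho$ mapping to loops. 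This contradicts $\bbQ\not\le W(\bbP)$.

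The hard part is the cyclic bookkeeping: extracting, from the single identity ``$A\,T\,B\,z = C\,U\,D\,{**}$ as cyclic words'' together with $C\not\ge A$, the fact that the $B$-end is well-behaved. The splitting $W(\bbP)=W(A)\,\rho\,W(B)$ and the embedding of the middle word are routine but need care when $T$ contains internal $*$'s (the $z$-letters of $\rho$ may then cluster) and in the degenerate case $k=0$ (where the single $z$ of $U$ must be absorbed by the $z^{N}$-tail of $W(A)$).
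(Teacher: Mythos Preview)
Your overall frame (argue by contradiction from $C\not\ge A$, handle $D\ge B$ by duality) matches the paper, but you have inserted an unnecessary obstacle and not cleared it. The step you flag as ``crucial''---deducing from $C\not\ge A$ together with the cyclic identity $\bbG=C(\bbP e)=C(\bbQ{*}{*})$ that either $B'=\epsilon$ or $D\not\ge B$---is left as a sketch of ``cyclic bookkeeping'' that you yourself call hard, and in fact it is not needed. The bound on the $D$-side holds \emph{unconditionally}: since $D'=B'$ and $D$ does not start with $*$ (Lemma~\ref{lemma-decompo}(3)), one has $D\in\langle B',s\rangle$, whence Lemma~\ref{lemma-useful}(2) and Lemma~\ref{lemma-david-1-5-b}(5) give $D\le W(*B')\le W(*B)$ directly. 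No comparison of $D$ with $B$ is required, so Proposition~\ref{prop-david-1-7} is invoked only once (for $C$), not twice.

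With this observation the paper's argument is three lines. From $C\not\ge A$ (so $A\neq\epsilon$, $A=A_1a$ with $a=\overline z$) Proposition~\ref{prop-david-1-7} gives $C\le W(A)$. Since $U'=z^{k+1}$ and $a=\overline z$, repeated use of Lemma~\ref{lemma-david-1-4}(1) yields $U\le\widehat a$. Combining with $D\le W(*B)$ and Lemma~\ref{lemma-david-1-5-a}(3) one gets
\[
\bbQ=CUD\ \le\ W(A)\,\widehat a\,W(*B)\ \le\ W(A)\,W(*B)\ =\ W(AB)\ \le\ W(ATB)\ =\ W(\bbP),
\]
the equality $W(A)W(*B)=W(AB)$ coming from Lemma~\ref{lemma-david-1-5-b}(1) together with $[a,a]=a$ (the first letter of $B$, when $B\neq\epsilon$, is also $\overline z$), and the last inequality from Lemma~\ref{lemma-david-1-5-b}(5). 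There is no need to isolate a ``$T$-block'' $\rho$ inside $W(\bbP)$ and match the middle $z$-run of $U$ against it: all of $U$ is simply swallowed by the single tail block $\widehat a=z^N$ of $W(A)$, and the comparison with $T$ is deferred to the one-line inequality $W(AB)\le W(ATB)$.
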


\begin{proof} We prove the result by contradiction: we show that, if $\bbP$ and $\bbQ$ admit the decomposition of Lemma \ref{lemma-decompo} and $C \not\geq A$ or $D \not\geq B$ then $\bbQ \leq W(\bbP)$; by symmetry it suffices to consider the case  $C \not\geq A$. In that case $A \neq \epsilon$ so $A=A_1a$ with $a = \overline{z}$ by Lemma \ref{lemma-decompo} (2) and (3).
\begin{enumerate}
\item  $C \leq W(A)$ by Proposition \ref{prop-david-1-7}.
\item $U \leq \widehat{a}$ by repeated applications of Lemma \ref{lemma-david-1-4} (1) with $x = a = \overline{z}$.
\item $D \leq W(*B)$; indeed, if $D = \epsilon$ the result is trivial. Otherwise by Lemma \ref{lemma-decompo} (3) we have that $D \in \langle D',s\rangle =  \langle B',s\rangle$ so $D \leq W(*B') \leq W(*B)$ by Lemma \ref{lemma-useful} (2) and Lemma \ref{lemma-david-1-5-b} (5). 
\end{enumerate}

Now if we concatenate the above we get that $\bbQ = CUD \leq W(A)\widehat{a}\,W(*B) \leq W(A)W(*B)$ by Lemma \ref{lemma-david-1-5-a} (3). If $B = \epsilon$ the result follows easily; otherwise by Lemma \ref{lemma-decompo} (2) and (3) we have $B = aB_1$; by  Lemma \ref{lemma-david-1-5-b} (1) and (5) we get

$$ W(A)W(*B) = W(A)a\,W(B)   =W(A)[a,a]W(B) 
 = W(AB) 
 \leq W(ATB)  = W(P).
$$
\end{proof}

Knowing that $\bbP$ contains exactly two more $*$'s than $\bbQ$, and that $C \geq A$ and $D \geq B$ by Proposition \ref{prop-david-1-9},  a simple counting argument using Lemma \ref{lemma-aligned} shows that $T$ contains at least two more $*$'s than $U$, and in particular $|T|  \geq |U|+1$ so it is not empty.) 

\begin{lemma} \label{lemma-U-and-T-a} $U \not\leq W(T)$. \end{lemma}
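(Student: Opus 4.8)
The plan is to argue by contradiction: assuming $U\le W(T)$, I will produce a homomorphism witnessing $\bbQ=CUD\le W(ATB)=W(\bbP)$, which contradicts the standing hypothesis $\bbQ\not\le W(\bbP)$.

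Two degenerate cases are dealt with at once. If $k=0$ then $T=(\ast)^{m}$ with $m\ge1$ (recall $T\ne\epsilon$), so $W(T)=(\ast)^{m-1}$; but a surjective homomorphism onto $P(U)$ from the all‑symmetric path $P((\ast)^{m-1})$ forces every edge of $P(U)$ to be symmetric, contradicting $U'=z^{k+1}\ne\epsilon$. If $A=B=\epsilon$ then $\bbP=T$ and $\bbQ=U$, so the assumption is literally $\bbQ\le W(\bbP)$, already a contradiction. Hence we may assume $k\ge1$ and that at least one of $A,B$ is non‑empty; by Lemma~\ref{lemma-decompo}~(2)--(3), if $A\ne\epsilon$ then $A=A_{0}\,\overline z$, and since $C\ge A$ with $C'=A'$ (Proposition~\ref{prop-david-1-9}), Lemma~\ref{lemma-aligned} shows $C=C_{0}\,\overline z$ with $C_{0}$ obtained from $A_{0}$ by inserting some occurrences of $\ast$; symmetric statements hold for $B$ and $D$.

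The engine of the proof is the star‑counting identity coming from Proposition~\ref{prop-david-1-8} (i.e.\ from $\bbG=C(\bbP\,e)=C(\bbQ\,\ast\ast)$ with $e\ne\ast$): writing $s(X)$ for the number of occurrences of $\ast$ in a word $X$,
$$s(T)=s(U)+2+(s(C)-s(A))+(s(D)-s(B)),$$
together with $s(C)\ge s(A)$ and $s(D)\ge s(B)$. Informally, $T$ carries enough surplus stars to pay off the surplus stars of $C$ and of $D$, with two to spare. Using the identity $W(ATB)=W(A)\,[a,t_{1}]\,W(T)\,[t_{r},b]\,W(B)$ (with $a,b$ the end letters of $A,B$ and $t_{1},t_{r}$ the end letters of $T=t_{1}\cdots t_{r}$), which is immediate from the definition of $W$, it then suffices to prove three inequalities of the shape
$$C\ \le\ W(A)\,[a,t_{1}],\qquad U\ \le\ W(T),\qquad D\ \le\ [t_{r},b]\,W(B)$$
and to concatenate them by Lemma~\ref{lemma-utilities}~(7). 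The middle inequality is exactly the contradiction hypothesis. For the two outer ones the key observation is that $W(A)$ ends with the block $\widehat{\overline z}=(z)^{N}$, and that this block, assisted by Lemma~\ref{lemma-useful}, Lemma~\ref{lemma-david-1-5-b}~(5) and Proposition~\ref{prop-david-1-7}, is long enough to dominate the scattered surplus stars of $C$ (and symmetrically those of $D$); the precise prefix/suffix of $T$ to be used, and how large $N$ must be, are dictated by the identity above. When only one of $A,B$ is empty, the corresponding outer factor is simply omitted.

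I expect the real obstacle to be these two outer inequalities, above all $C\le W(A)\,[a,t_{1}]$: because $C$ may lie strictly above $A$, one cannot use $C\le W(A)$ (Proposition~\ref{prop-david-1-7} requires $C\not\ge A$), so the extra stars of $C$ have to be transported across the junction from $A$ to $T$ and charged against the $(z)^{N}$‑block and the leading stars of $T$ --- and this must be done without consuming the single spare $z$ that $U'=z^{k+1}$ has over $T'=z^{k}$, which in the end must itself be supplied by the same junction block. Reconciling these two competing demands is where the genuine bookkeeping lies; the remaining steps are routine manipulations inside the $W(\cdot)$‑calculus developed in Lemmas~\ref{lemma-w}--\ref{lemma-david-1-5-b}.
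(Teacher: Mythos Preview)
Your overall plan is correct and matches the paper's: argue by contradiction, assuming $U\le W(T)$ and deducing $\bbQ=CUD\le W(\bbP)=W(ATB)$. The decomposition $W(ATB)=W(A)\,[a,t_1]\,W(T)\,[t_r,b]\,W(B)$ is also right. But you leave the two outer inequalities $C\le W(A)\,[a,t_1]$ and $D\le[t_r,b]\,W(B)$ unproved, describing them as ``the real obstacle'' requiring ``genuine bookkeeping'' and a star-transport argument across the $A$--$T$ junction --- and this is where the proposal has a real gap.

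In fact those inequalities are immediate, and none of the star-counting apparatus is needed. Since $C$ does not end with $\ast$ (Lemma~\ref{lemma-decompo}~(3)) and $C'=A'$, the word $C$ has the form $\ast^{c_0}a'_1\ast^{c_1}\cdots\ast^{c_{r-1}}a'_r$ with $A'=a'_1\cdots a'_r$; Lemma~\ref{lemma-david-1-4}~(2) gives $\ast^{c_{i-1}}a'_i\le\widehat{a'_i}\,[a'_i,a'_{i+1}]$ for each $i$, and concatenating yields $C\le W(A'\ast)\le W(A\ast)$. This is exactly the argument of item~(3) in the proof of Proposition~\ref{prop-david-1-9}, i.e.\ Lemma~\ref{lemma-useful}. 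Now $W(A\ast)=W(A)\,a$ by Lemma~\ref{lemma-david-1-5-b}~(1) and $a\le[a,t_1]$ by Lemma~\ref{lemma-w}~(2), so $C\le W(A)\,[a,t_1]$; the inequality for $D$ is symmetric. Your worry that Proposition~\ref{prop-david-1-7} is blocked by $C\ge A$ is correct but beside the point: Lemma~\ref{lemma-useful} has no such hypothesis --- it only uses that $C'=A'$ and that $C$ does not end in $\ast$. With this observation the proof collapses to three lines; the star-counting identity, the ``competing demands'' discussion, and the separate treatment of $k=0$ are all unnecessary.
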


\begin{proof} By the argument used in (3) of the proof of Proposition \ref{prop-david-1-9} we have that $C \leq W(A*)$ and $D \leq W(*B)$. Let us now suppose that neither $A$ nor $B$ is empty (otherwise the argument is quite similar): let $A=A_1a$ and $B=bB_1$, and let $u$ and $v$ denote the first and last symbol of $T$. If $U \leq W(T)$, we get by Lemma \ref{lemma-david-1-5-b} (1) that
\begin{align*}
 \bbQ = CUD \leq W(A*)W(T)W(*B) &= W(A)aW(T)bW(B)\\
 & \leq W(A)[a,u]W(T)[v,b]W(B) \\
 & = W(ATB) = W(\bbP),
 \end{align*}
a contradiction. 

 \end{proof}
 
 We can now refine Lemma \ref{lemma-decompo}:
 
 \begin{lemma}  \label{lemma-decompo-b} There exist words $\underline{A},\underline{B},\underline{C},\underline{D}$ on $\cA$, $z \in \{+,-\}$ and an integer $t \geq 2$ such that 
 
 \begin{enumerate}
 
 \item $\bbP = \underline{A}*^t\underline{B}$ and $\bbQ = \underline{C}z\underline{D}$;
 \item   $\underline{A}'=\underline{C}'$, $\underline{B}'=\underline{D}'$;
% \item $\underline{A} \leq \underline{C}$ and $\underline{B} \leq \underline{D}$; \bnote{ this is used to prove (6) but nothing else ?}
% \item $\underline{A},\underline{C}$ do not end with a $*$; $\underline{B},\underline{D}$ do not start with a $*$;
% \item $t=a+b+2$;
 \item $\underline{C} \leq W(\underline{A}*)$ and $\underline{D} \leq W(*\underline{B})$;
 \item there exist integers $a,b$ and words $_AX,{_BX},X_C,X_D$ such that $\underline{A}={_AX}*^a$, $\underline{C}={_CX}*^a$, $\underline{B}=*^bX_B$, $\underline{D}=*^bX_D$ where ${_AX},{_CX}$ do not end with $*$ and $X_B,X_D$ do not start with $*$(in other words, $\underline{A}$ and $\underline{C}$ end with the same number of $*$'s and $\underline{B}$ and $\underline{D}$ start with the same number of $*$'s);
 \item If $t=2$, then $\underline{A}=\underline{C}$ and $\underline{B}=\underline{D}$. 
 
 \end{enumerate}

 \end{lemma}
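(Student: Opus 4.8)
The plan is to refine, step by step, the decomposition $\bbP = ATB$, $\bbQ = CUD$ supplied by Lemma \ref{lemma-decompo}, using the two extra facts now available: $C \ge A$ and $D \ge B$ (Proposition \ref{prop-david-1-9}), and $U \not\le W(T)$ (Lemma \ref{lemma-U-and-T-a}). Write the star-block decompositions
\[
T = *^{t_0}z\,*^{t_1}z\cdots z\,*^{t_k}, \qquad U = *^{u_0}z\,*^{u_1}z\cdots z\,*^{u_{k+1}},
\]
so that $T$ has $k$ and $U$ has $k+1$ occurrences of $z$. Since $\bbP$ (resp.\ $\bbQ$) is obtained from the cyclic word $\bbG$ by deleting a single non-star $e$ (resp.\ two stars), counting stars gives $\mathrm{stars}(A)+\sum_i t_i+\mathrm{stars}(B) = \mathrm{stars}(C)+\sum_i u_i+\mathrm{stars}(D)+2$; and $C \ge A$, $D \ge B$ force $\mathrm{stars}(C)\ge\mathrm{stars}(A)$, $\mathrm{stars}(D)\ge\mathrm{stars}(B)$ by Lemma \ref{lemma-aligned}, so
\[
\sum_i t_i - \sum_i u_i \;=\; \bigl(\mathrm{stars}(C)-\mathrm{stars}(A)\bigr)+\bigl(\mathrm{stars}(D)-\mathrm{stars}(B)\bigr)+2 \;\ge\; 2.
\]

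The core of the argument is the following embedding fact: \emph{if $t_j \le u_j+u_{j+1}+1$ for every $j\in\{0,\dots,k\}$, then $U \le W(T)$}. To prove it one reads off the local shape of $W(T)$: around the $i$-th occurrence of $z$ in $T$ the word $W(T)$ contains a factor of the form $z\,\widehat z\,z\,*^{t_i-1}$, with the extreme blocks $*^{t_0},*^{t_k}$ and any consecutive $z$'s (blocks $t_i=0$) treated separately; since $\widehat z=\overline z^{\,N}$ with $N$ enormous while every $u_i$ is at most $n$, Lemmas \ref{lemma-david-1-4} and \ref{lemma-useful} show that each such factor lies above the corresponding factor $z\,*^{u_i}$ of $U$ with slack to spare, and the hypothesis $t_j\le u_j+u_{j+1}+1$ is precisely what makes this slack match up when the local pieces are concatenated. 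Combined with Lemma \ref{lemma-U-and-T-a}, this yields an index $j$ with $t_j\ge u_j+u_{j+1}+2$. Fix such a $j$, put $t:=t_j-u_j-u_{j+1}\ge 2$, split $*^{t_j}=*^{u_j}\,*^{t}\,*^{u_{j+1}}$, and set
\[
\underline{A}:=A\,*^{t_0}z\cdots z\,*^{t_{j-1}}z\,*^{u_j},\qquad \underline{B}:=*^{u_{j+1}}z\,*^{t_{j+1}}\cdots z\,*^{t_k}\,B,
\]
\[
\underline{C}:=C\,*^{u_0}z\cdots z\,*^{u_{j-1}}z\,*^{u_j},\qquad \underline{D}:=*^{u_{j+1}}z\,*^{u_{j+2}}\cdots z\,*^{u_{k+1}}\,D
\]
(with the obvious conventions when $j=0$ or $j=k$ or one of $A,B,C,D$ is empty). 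Then $\bbP=\underline{A}\,*^t\,\underline{B}$, $\bbQ=\underline{C}\,z\,\underline{D}$, $\underline{A}'=\underline{C}'$, $\underline{B}'=\underline{D}'$, and $\underline{A},\underline{C}$ end with exactly $u_j$ stars while $\underline{B},\underline{D}$ start with exactly $u_{j+1}$ stars; this establishes (1), (2) and (4). For (3) one uses $C\le W(A*)$ and $D\le W(*B)$ (obtained inside the proof of Lemma \ref{lemma-U-and-T-a}) and feeds the remaining factors of $\underline{C}$ and $\underline{D}$ through Lemmas \ref{lemma-useful} and \ref{lemma-david-1-5-b}, exactly as in that proof, to get $\underline{C}\le W(\underline{A}*)$ and $\underline{D}\le W(*\underline{B})$.

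It remains to check (5), so suppose $t=2$, i.e.\ $t_j=u_j+u_{j+1}+2$. A variant of the embedding fact shows that if, in addition, some $t_i$ with $i<j$ exceeded $u_i$, or some $t_i$ with $i>j$ exceeded $u_{i+1}$, then the extra room would again let us build a homomorphism witnessing $U\le W(T)$, contradicting Lemma \ref{lemma-U-and-T-a}. Hence $t_i\le u_i$ for $i<j$ and $t_i\le u_{i+1}$ for $i>j$; comparing with the displayed inequality (which after substituting $t_j=u_j+u_{j+1}+2$ reads $\sum_{i\ne j}t_i\ge\sum_{i\notin\{j,j+1\}}u_i$) forces termwise equality $t_i=u_i$ for $i<j$, $t_i=u_{i+1}$ for $i>j$, together with $\mathrm{stars}(C)=\mathrm{stars}(A)$ and $\mathrm{stars}(D)=\mathrm{stars}(B)$; the last two give $A=C$ and $B=D$ by Lemma \ref{lemma-aligned}. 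Substituting into the formulas for the $\underline{\phantom{A}}$-words yields $\underline{A}=\underline{C}$ and $\underline{B}=\underline{D}$, which is (5). I expect the embedding fact and its $t=2$ variant to be the main obstacle — one must pin down the local structure of $W(T)$ around each $z$, handle the extreme blocks and the blocks $t_i=0$ with care, and make the slack bookkeeping of Lemmas \ref{lemma-david-1-4}/\ref{lemma-useful} fully precise — while the remaining work (the degenerate cases where some $A,B,C,D$ or extreme star-block is empty, and the concatenations needed for (3)) is routine but fiddly.
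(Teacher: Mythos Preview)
Your overall plan matches the paper's: write $T=*^{t_0}z\cdots z*^{t_k}$ and $U=*^{u_0}z\cdots z*^{u_{k+1}}$, locate an index $j$ where $*^{t_j}$ is ``too long'', split it as $*^{u_j}*^t*^{u_{j+1}}$, and define $\underline A,\underline B,\underline C,\underline D$ exactly as the paper does. Parts (1), (2), (4) then follow, and your justification for (3) via $C\le W(A*)$, $D\le W(*B)$ and Lemma~\ref{lemma-david-1-5-b} is also what the paper does.

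The gap is in how you locate $j$. Your ``embedding fact'' as stated is \emph{false}: take $k=1$, $z=+$, $T={*}{*}{+}$ (so $t_0=2,t_1=0$) and $U={+}{*}{+}$ (so $u_0=0,u_1=1,u_2=0$). Then $t_0=2\le u_0+u_1+1=2$ and $t_1=0\le u_1+u_2+1=2$, so your hypothesis holds; but $W(T)={*}{+}(-)^N$ contains only one ${+}$, while mapping onto $P({+}{*}{+})$ with endpoints preserved requires two, so $U\not\le W(T)$. This example has $\sigma(T)-\sigma(U)=1$, not $\ge 2$, so it lies outside the lemma's setting---but your statement and sketch never invoke that global count, and a purely local slack-matching argument of the kind you describe would prove the fact in this example too. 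So the sketch cannot be completed as written.

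The paper avoids this by proving two claims. Claim~1 (by induction on $k$, crucially using $\sum t_i>\sum u_i$): there exist $l\le r$ with $u_l<t_l$ and $u_{r+1}<t_r$. Claim~2 (by an explicit construction of a homomorphism, the analogue of your ``variant''): any such pair must have $l=r$. Together these give an index $j$ with $u_j<t_j$, $u_{j+1}<t_j$, and moreover $u_i\ge t_i$ for $i<j$ and $u_{i+1}\ge t_i$ for $i>j$; combined with the star count this forces $t_j\ge u_j+u_{j+1}+2$. Your ``variant for (5)'' is exactly the paper's Claim~2, but you have nothing corresponding to Claim~1---and it is Claim~1 that carries the global information your local argument is missing. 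Once you have both claims, part (5) follows by the counting you describe (using $\underline A\le\underline C$, $\underline B\le\underline D$ from Lemma~\ref{lemma-aligned} and the inequalities $u_i\ge t_i$, $u_{i+1}\ge t_i$).
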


 \begin{proof} Let $U =  *^{u_0} z \dots *^{u_{k}}z*^{u_{k+1}}$ and $T =   *^{t_0} z \dots *^{t_{k-1}}z*^{t_{k}}$. Recall that $\sum_{i=0}^k t_i > \sum_{i=0}^{k+1} u_i$ by the comments just prior to Lemma \ref{lemma-U-and-T-a}. 
 
 \noindent{\bf Claim 1.} There exist indices $l \leq r$ such that $u_l < t_l$ and $u_{r+1} < t_r$. 
 
 \noindent{\em Proof of Claim 1.}  We use induction on $k$.
 If $k=0$ the result is immediate. Now suppose that $k \geq 1$ and the result holds for smaller values. Since $\sum_{i=0}^k t_i > \sum_{i=0}^{k} u_i$ and $\sum_{i=0}^k t_i > \sum_{i=1}^{k+1} u_i$, there exist indices $l$ and $r$ such that $u_l < t_l$ and $u_{r+1} < t_r$; it remains to show that there are such indices such that $l \leq r$. If $u_0 < t_0$ or $u_{k+1} < t_k$ we are done, so assume that $u_0 \geq t_0$ and $u_{k+1} \geq t_k$. It follows that  $\sum_{i=1}^{k-1} t_i > \sum_{i=1}^{k} u_i$ and we can use the induction hypothesis to conclude. 
 
  \noindent{\bf Claim 2.}  For any indices satisfying the conditions of Claim 1,  $l = r$. 
 
 \noindent{\em Proof of Claim 2.}  We prove this by contradiction: suppose that $u_l < t_l$ and $u_{r+1} < t_r$ and $l < r$: we shall deduce that $U \leq W(T)$, contradicting Lemma \ref{lemma-U-and-T-a}. Since $l < r$, we can write $T = T_l *^{t_l} T_c *^{t_r} T_r$ and $U = U_l *^{u_l} U_c *^{u_r} U_r$ where for some $m \geq 0$ we have $T'_c = z^m$, $U'_c = z^{m+1}$, $T'_l=U'_l$ and $T'_r=U'_r$; furthermore  $U_l$ does not end with $*$, $U_r$ doesn't start with $*$, and $U_c$ starts and ends with $z$. 
 \begin{enumerate}
 \item $U_l \leq W(T_l*)$: same argument as used in (3) of the proof of Proposition \ref{prop-david-1-9};
 \item $U_r \leq W(*T_r)$: same comment.
 \item $U_c \leq W(*T_c*)$: indeed, for some $a_i$ we have
 $$U_c = (z*^{a_1})(z*^{a_2}) \cdots(z*^{a_m})z \leq (z \widehat{z})^mz = z (\widehat{z}z)^{m-1}\widehat{z}z = W(*z^m*)=W(*T'_c*)\leq W(*T_c*)$$
 using Lemma \ref{lemma-david-1-5-b} (5) for the last inequality. 
 \end{enumerate}
 If we concatenate the above, and using the fact that $*^{u_l} \leq *^{t_l-1}$ and  $*^{u_{r+1}} \leq *^{t_r-1}$ we get 
 \begin{align*} U = U_l *^{u_l} U_c *^{u_r} U_r  &\leq    W(T_l*)*^{t_l-1}W(*T_c*)  *^{t_r-1} W(*T_r)  \\
 &=           W(T_l*^{t_l}T_c *^{t_r}T_r)      = W(T)
  \end{align*}
 using Lemma \ref{lemma-david-1-5-b} (2) twice. \\
 
 By Claims 1 and 2 there is an index $j$ such that $u_j < t_j$ and $u_{j+1} < t_j$. Then we can write $U = U_l *^{u_j} z *^{u_{j+1}} U_r$ and $T = T_l *^{t_j} T_r$. By Claim 2 we have $u_{i} \geq t_i$ for all $i \leq j-1$ and $u_{i+1} \geq t_i$ for all $i \geq j+1$; since $T$ contains at least two stars more than $U$ it follows that we can write  $t_j=u_j+u_{j+1}+t$ for some $t \geq 2$.  Clearly $T'_l = U'_l$ and $T'_r = U'_r$.
 Let  $\underline{A}=AT_l*^{u_j}$, $\underline{B}=*^{u_{j+1}}T_rB$, $\underline{C}=CU_l*^{u_j}$ and $\underline{D}=*^{u_{j+1}}U_rD$.  It is immediate that  $\underline{A}'=\underline{C}'$ and $\underline{B}'=\underline{D}'$.  Notice that (4) is satisfied by construction and Lemma \ref{lemma-decompo}.

  Since $U_l$ does not end with a star, we use the argument in (3) of the proof of Proposition \ref{prop-david-1-9} to get that $CU_l \leq W(AT_l*)$ and thus 
  $$\underline{C} =CU_l*^{u_j} \leq W(AT_l*)*^{u_j} = W(AT_l*^{u_j+1})=W(\underline{A}*)$$ using Lemma \ref{lemma-david-1-5-b} (2). The argument for the other inequality is identical.
  
  Finally we prove (5): suppose $t=2$. One verifies easily, using Proposition \ref{prop-david-1-9} and Lemma \ref{lemma-aligned} that $\underline{A} \leq \underline{C}$ and $\underline{B} \leq \underline{D}$. For any word $X$ let $\sigma(X) = |X|-|X'|$, i.e. the number of stars in $X$.  Since $\bbG = C(\bbP e)=C(\bbQ{*}{*})$ we have that $\sigma(\bbP) = \sigma(\bbQ)+2$. By Lemma \ref{lemma-aligned}, $\underline{A} \leq \underline{C}$ and $\underline{B} \leq \underline{D}$ implies $\sigma(\underline{A}) \leq \sigma(\underline{C})$ and  $\sigma(\underline{B}) \leq \sigma(\underline{D})$.  Then $\sigma(\bbP) = \sigma(\underline{A}) + \sigma(\underline{B}) + 2 \leq \sigma(\underline{C})+\sigma(\underline{D})+2 = s(\bbQ)+2$ so $\sigma(\underline{A}) = \sigma(\underline{C})$ and  $\sigma(\underline{B}) = \sigma(\underline{D})$; applying Lemma \ref{lemma-aligned} again we get $\underline{A}=\underline{C}$ and $\underline{B}=\underline{D}$. 
 \end{proof}
  
We require the following extension of Lemma \ref{lemma-david-1-5-a} (1): not only do we have $W(X) \not\geq X$, but this holds if we replace any number of $*$'s in $W(X)$ by any choice of $+$'s and $-$'s. 

\begin{lemma}\label{lemma-extension} Let $X$ be a non-empty word, let $Y$ be a word with a subword $Z$ on $\{+,-\}$ such that replacing every symbol of $Z$ by $*$ in $Y$ yields the word $W(X)$. Then $Y \geq W(X)$ and $Y \not\geq X$.   \end{lemma}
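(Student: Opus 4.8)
The plan is to handle the two assertions separately. The inequality $Y \ge W(X)$ is essentially free: by hypothesis $Y$ is obtained from $W(X)$ by replacing certain occurrences of $*$ by symbols from $\{+,-\}$, and since $* \le +$ and $* \le -$ while $\le$ is preserved under concatenation (Lemma \ref{lemma-utilities}(7)), comparing $W(X)$ with $Y$ one symbol at a time gives $W(X) \le Y$.

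For $Y \not\ge X$ I would rerun the induction on $r = |X|$ from the proof of Lemma \ref{lemma-david-1-5-a}(1), carrying the flipped stars along. The key structural remark is that the stars of $W(X)$ are introduced one at a time as the word is built up: writing $X = \bar X x_r$ with $\bar X = x_1 \cdots x_{r-1}$, we have $W(X) = W(\bar X)\,[x_{r-1},x_r]\,\widehat{x_r}$, and $\widehat{x_r}$ never contains a star while $[x_{r-1},x_r]$ contains a star only when $x_{r-1} = x_r = *$, in which case $[x_{r-1},x_r] = {*}$. Hence the stars of $W(X)$ are exactly those of $W(\bar X)$ together with at most one extra star, and any word $Y$ obtained from $W(X)$ by flipping stars decomposes as $Y = \widetilde{W}(\bar X)\, t$, where $\widetilde W(\bar X)$ is obtained from $W(\bar X)$ by flipping stars and $t$ equals $[x_{r-1},x_r]\widehat{x_r}$, except that when $x_{r-1}=x_r=*$ one may also have $t = c$ for some $c \in \{+,-\}$.

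In the base case $r = 1$ we have $W(X) = \widehat{x_1}$, which contains no star, so $Y = \widehat{x_1}$ and $Y \not\ge x_1$ by Lemma \ref{lemma-w}(1). For the inductive step ($r \ge 2$), suppose for contradiction that $Y \ge X$; by Lemma \ref{lemma-utilities}(1) there is a monotone end-point preserving map $f$ of $P(Y)$ onto $P(X)$. Put $\ell = |W(\bar X)| = |\widetilde W(\bar X)|$, so that the subpath of $P(Y)$ on $\{0,\dots,\ell\}$ is $P(\widetilde W(\bar X))$ and the subpath on $\{\ell,\dots,|Y|\}$ is $P(t)$, and let $q = f(\ell) \in \{0,\dots,r\}$. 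If $q \ge r-1$, then monotonicity forces the restriction of $f$ to $\{0,\dots,\ell\}$ to be onto $\{0,\dots,q\} \supseteq \{0,\dots,r-1\}$, whence $\widetilde W(\bar X) \ge \bar X$, contradicting the induction hypothesis. If $q \le r-2$, then the restriction of $f$ to $\{\ell,\dots,|Y|\}$ is an end-point preserving map of $P(t)$ onto $P(x_{q+1}\cdots x_r)$, a path of length $r-q \ge 2$ whose last two letters are $x_{r-1}x_r$; hence $t \ge x_{r-1}x_r$. If $t = [x_{r-1},x_r]\widehat{x_r}$, this contradicts Lemma \ref{lemma-david-1-4}(4); otherwise $x_{r-1}=x_r=*$ and $t$ is a single letter, so $t \ge {*}{*}$ would contradict the length bound in Lemma \ref{lemma-utilities}(3). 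Since these cases are exhaustive, the induction is complete.

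The part requiring the most care is the bookkeeping in the inductive step: one has to check that the decomposition $Y = \widetilde W(\bar X)\,t$ really is of the stated form (so that the induction hypothesis applies to the completely arbitrary flipping $\widetilde W(\bar X)$, not merely to flippings consistent with some global pattern), and that the split $q \ge r-1$ versus $q \le r-2$ covers every possibility. Conceptually, though, the whole argument rests on a single observation — a star of $W(X)$ coming from a substring ${*}{*}$ of $X$ is, even after being flipped, a word of length one, hence far too short to dominate that ${*}{*}$ — which is exactly what prevents the flipped stars from ever assembling into an end-point preserving surjection onto $P(X)$.
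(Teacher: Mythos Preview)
Your proof is correct, but the decomposition you use is genuinely different from the paper's. The paper observes that every $*$ in $W(X)$ arises as $[*,*]$ from a pair of consecutive stars in $X$; picking any flipped star therefore locates a factorisation $X = A{*}{*}B$, and Lemma~\ref{lemma-david-1-5-b}(2) gives $W(A{*}{*}B) = W(A{*})\,{*}\,W({*}B)$, so $Y = Y_1 z Y_2$ with $z \in \{+,-\}$ and each $Y_i$ a flipping of the corresponding factor. The induction hypothesis then gives $Y_1 \not\geq A{*}$ and $Y_2 \not\geq {*}B$, and a short monotone-map argument (Lemma~\ref{lemma-utilities}(1),(5)) rules out $Y \geq X$. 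Your approach instead peels off the last letter, writing $W(X) = W(\bar X)\,[x_{r-1},x_r]\,\widehat{x_r}$ and splitting on the value $q = f(\ell)$ of a monotone witness at the boundary. The paper's route is a bit shorter and makes the role of consecutive stars explicit, while yours is more uniform (no separate case for ``$X$ has no consecutive stars'') at the price of the $q$-casework and the appeal to Lemma~\ref{lemma-david-1-4}(4). Two small points worth making explicit in your write-up: when $q = r$ you actually get $\widetilde W(\bar X) \geq X$, and then $\bar X \leq X$ (Lemma~\ref{lemma-utilities}(3)) gives the contradiction; and the step $t \geq x_{q+1}\cdots x_r \Rightarrow t \geq x_{r-1}x_r$ follows from transitivity together with Lemma~\ref{lemma-utilities}(3), since $x_{r-1}x_r$ is a subword of $x_{q+1}\cdots x_r$.
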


\begin{proof} The first inequality is immediate. To prove the second statement we use induction on the length of $X$. Since every $*$ of $W(X)$ can only appear as $[*,* ]$, if $X$ does not contain consecutive $*$'s then $W(X)$ contains no $*$'s and the result is trivial (in particular if $|X|=1$). If $Z = \epsilon$ then $Y=W(X)$ and the result holds by Lemma \ref{lemma-david-1-5-a} (1), so we suppose that $Z \neq \epsilon$. 
So write $X=A**B$ where the middle $**$ corresponds to a symbol in the subword $Z$. By Lemma \ref{lemma-david-1-5-b} (2) we have that $W(A**B)=W(A*)*W(*B)$.  Thus $Y=Y_1zY_2$ where $z \in \{+,-\}$, and $Y_1$ (respectively $Y_2$)  is obtained from $A*$ (respectively $*B$) by replacing various $*$'s by any choice of $+$'s and $-$'s. By induction hypothesis, we have that $Y_1 \not\geq A*$ and $Y_2 \not\geq *B$. By Lemma \ref{lemma-utilities} (1), if $Y_1zY_2 \geq A**B$ and $Y_1 \not\geq A*$ then we must have $zY_2 \geq **B$ which implies $Y_2 \geq *B$ by Lemma \ref{lemma-utilities} (5), a contradiction. 
\end{proof}

 \begin{definition} \label{def-position} 
 Let $\bbQ$ be any subpath of $\bbG$ of length $n-2$ such that $\bbQ \not\leq W(\bbP)$. Then Lemma \ref{lemma-decompo-b} gives a representation of $\bbP$ and $\bbQ$ that identifies a substring of $*$'s in the word $W(\bbP)$ as follows: we have that $W(\bbP)=W(\underline{A}*^t \underline{B}) = W(\underline{A}*)*^{t-1}W(*\underline{B})$ by  Lemma \ref{lemma-david-1-5-b} (2). 
 Let $p(\bbQ)=\{i: |W(\underline{A}*)|+1 \leq i \leq |W(\underline{A}*)|+(t-1)\}$. Let $s(\bbQ)=z$ (in the representation given in Lemma \ref{lemma-decompo-b}). 
 \end{definition}

\begin{prop}\label{prop-final-decompo} Let $\bbG$ be an $n$-cycle failing the path condition, as witnessed by the subpath $\bbP$ of length $n-1$. Then there exist subpaths $\bbQ, \bbR$ of $\bbG$ of length $n-2$, words $A,B,C,D,E,F$ on $\cA$ and an integer $t \geq 2$ such that 
\begin{enumerate}
\item $\bbG=C(\bbP e)=C(\bbQ**)=C(\bbR**)$ for some $e \in \{+,-\}$, 
\item $p(\bbQ) \subseteq p(\bbR)$,
\item $\bbP=A*^2 B = C*^tD$,
\item $\bbQ = A+B$ and $\bbR = E-F$,
%\item $C \leq E$ and $D \leq F$, 
\item $C$ and $E$ end with the same number of $*$'s; \\ $D$ and $F$ start with the same number of $*$'s; 
\item $A'=C'=E'$ and $B'=D'=F'$. 
\end{enumerate}
\end{prop}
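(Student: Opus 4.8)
The plan is to assemble this proposition from the structural results already established, and the main point is to show that the witnessing subpath $\bbP$ of length $n-1$ can be paired with \emph{two} subpaths $\bbQ,\bbR$ of length $n-2$ whose ``star positions'' $p(\bbQ)\subseteq p(\bbR)$ are nested, with $\bbQ$ using a $+$ where $\bbR$ uses a $-$ at the corresponding spot.

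\smallskip

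\noindent\textbf{Step 1: produce one witness and normalise it.} Since $\bbG$ fails the path condition, Theorem~\ref{theorem-surj-path} gives a subpath $\bbP$ of length $n-1$ and a subpath $\bbQ_0$ of length $n-2$ with $\bbQ_0\not\leq W(\bbP)$. Applying Proposition~\ref{prop-david-1-8} we learn that, writing $\bbG=C(\bbP e)=C(\bbQ_0 xy)$, we have $xy=**$, $e\in\{+,-\}$, and there are words with $\bbP'=AB$, $\bbQ_0'=A e B$ or $A\overline{e}B$. Replacing $\bbQ_0$ by its dual if necessary (the class of length-$(n-2)$ subpaths of a cycle is closed under duals, and $W(\bbP)$-comparability is preserved under $\overline{(\cdot)}$ by Lemma~\ref{lemma-utilities}(4)), we may assume $\bbQ_0' = A e B$ with $e=+$, i.e. $\bbG=C(\bbP e)=C(\bbQ**)$ where $\bbP = A*^2B$ after we apply the full decomposition. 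Then Lemma~\ref{lemma-decompo-b} gives the refined form $\bbP=\underline{A}*^t\underline{B}$, $\bbQ=\underline{C}z\underline{D}$ with $t\geq 2$, $\underline{A}'=\underline{C}'$, $\underline{B}'=\underline{D}'$, the tail-star/head-star alignment of (4), and $\underline{A}=\underline{C}$, $\underline{B}=\underline{D}$ when $t=2$. Relabelling, this is exactly items (3),(4),(6) of the proposition for $\bbQ$ (taking $A=\underline{A}$, $B=\underline{B}$, $C=\underline{C}$, $D=\underline{D}$, and noting $z=+$ so $\bbQ=A+B$ after absorbing the alignment; when $t>2$ the words $C,D$ may differ from $A,B$ only in the placement of stars, which is what (5),(6) record).

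\smallskip

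\noindent\textbf{Step 2: produce the dual witness $\bbR$ and check the position inclusion.} Since the cycle $\bbG=C(\bbQ**)=C((A+B)**)$ reading the word backwards is $C((\overline{B}\,-\,\overline{A})**)$, the path $\overline{\bbQ}$ is again a subpath of $\bbG$ of length $n-2$; but we want a witness that, in the fixed orientation used for $\bbP$, uses a $-$. Here I would argue as follows: the cycle has $C(\bbP e)$ with $e=+$, so traversing $\bbG$ in the reverse orientation exhibits $\bbG=C(\overline{\bbP}\,\overline e)=C(\overline{\bbP}\,-)$, and $\overline{\bbQ}\not\leq W(\overline{\bbP})$ by Lemma~\ref{lemma-utilities}(4) together with the fact (Lemma~\ref{lemma-david-1-5-a} and the definition of $W$) that $\overline{W(X)}=W(\overline X)$, which is checked directly from the definition of $W_N$. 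So $\overline{\bbQ}$ is a second witness failing the path condition against $\overline{\bbP}$; but $\overline{\bbP}$ and $\bbP$ are the \emph{same} subpath of $\bbG$ read in opposite directions, hence Lemma~\ref{lemma-decompo-b} applied to the witness $\overline{\bbQ}$ against $\bbP$ (now read so that the distinguished arc $e$ is a $-$) yields a representation $\bbP=E*^{t'}F$, $\bbR=E z'F$ with $z'=-$ and $E'=\bbP'$-compatible. Set $\bbR$ to be this path; then (4) becomes $\bbR=E-F$ and (6) gives $E'=A'$, $F'=B'$.

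\smallskip

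\noindent\textbf{Step 3: normalise so both witnesses cut the \emph{same} block of stars, giving $p(\bbQ)\subseteq p(\bbR)$.} This is the step I expect to be the main obstacle, because a priori Lemma~\ref{lemma-decompo-b} applied to $\bbQ$ and to $\bbR$ could locate the distinguished star-block of $W(\bbP)$ in two different places. The resolution: $\bbP'=\bbQ'$ with the stars of $\bbP$ spread so that $\sigma(\bbP)=\sigma(\bbQ)+2$, and similarly for $\bbR$; by Lemma~\ref{lemma-aligned} the positions where $\bbP$ has ``extra'' stars relative to $\bbQ$ (resp. $\bbR$) are forced, and $p(\bbQ),p(\bbR)$ as in Definition~\ref{def-position} are precisely the interval of indices in $W(\bbP)$ coming from the middle run $*^{t-1}$ (resp. $*^{t'-1}$) of $W(\bbP)=W(\underline A*)*^{t-1}W(*\underline B)$. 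If $t\leq t'$ and the blocks are nested we are done; the point to nail down is that they \emph{must} be nested (one interval inside the other) rather than merely overlapping or disjoint. Here I would invoke Lemma~\ref{lemma-U-and-T-a} and Claims 1--2 inside the proof of Lemma~\ref{lemma-decompo-b}: those claims show the ``decrease'' of the star-vector of $T$ relative to $U$ happens at a \emph{single} coordinate $j$, so the block $p(\bbQ)$ sits inside one maximal run of stars of $W(\bbP)$, and likewise $p(\bbR)$; since $\bbP$ is a \emph{fixed} word, these maximal runs are fixed, so if $p(\bbQ)$ and $p(\bbR)$ meet the same run then the shorter is contained in the longer (by the explicit endpoint formulas $|W(\underline A*)|+1\le i\le |W(\underline A*)|+(t-1)$), and if they meet different runs we may re-run Step 1--2 choosing the witnesses so that both target the same run — concretely, one chooses $\bbQ$ to be a witness with $p(\bbQ)$ of \emph{minimal} length among all length-$(n-2)$ witnesses failing against $\bbP$, and then shows every other witness's position interval, in particular $\bbR$'s, contains it, because any run of stars of $W(\bbP)$ long enough to host a witness hosts the minimal one as well (this uses Lemma~\ref{lemma-extension}: a witness can be ``shortened'' inside its own star-run without ceasing to be a witness). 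After this normalisation, relabel $A=\underline A$, $B=\underline B$, $C,D$ for $\bbQ$'s decomposition and $E,F$ for $\bbR$'s; items (1)--(6) then hold by construction, with (2) now immediate from the containment of the position intervals, and (5) being the tail/head star-count alignment recorded in Lemma~\ref{lemma-decompo-b}(4) for each of $\bbQ,\bbR$ against the common $\bbP$.
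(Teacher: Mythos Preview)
There is a genuine gap. The heart of the proposition is the simultaneous existence of two witnesses $\bbQ,\bbR\not\leq W(\bbP)$ with \emph{opposite} signs $s(\bbQ)\ne s(\bbR)$, \emph{nested} positions $p(\bbQ)\subseteq p(\bbR)$, and moreover $|p(\bbQ)|=1$ (so that the $\bbQ$-decomposition of $\bbP$ has exponent exactly $2$, which is what item (3) demands and what, via Lemma~\ref{lemma-decompo-b}(5), forces $\bbQ=A{+}B$ in item (4)). Your construction does not secure any of these three facts. Already in Step~1 the claim that ``$W(\bbP)$-comparability is preserved under $\overline{(\cdot)}$'' is false: Lemma~\ref{lemma-utilities}(4) gives $\bbQ\leq W(\bbP)\iff\overline{\bbQ}\leq\overline{W(\bbP)}=W(\overline{\bbP})$, not $\overline{\bbQ}\leq W(\bbP)$; and the sign $z$ in Lemma~\ref{lemma-decompo-b} is determined by the data, not chosen, so ``noting $z=+$'' is unjustified. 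In Step~2 the same confusion recurs: dualizing produces a witness against $\overline{\bbP}$, and once you ``read $\bbP$ the other way'' the position set you compute for $\bbR$ lives in $W(\overline{\bbP})$, so the inclusion $p(\bbQ)\subseteq p(\bbR)$ of item (2) is comparing indices in two different words. Step~3's ``shortening a witness inside its star-run'' misreads Lemma~\ref{lemma-extension}, which is about replacing $*$'s in $W(X)$ by $\pm$, not about modifying actual subpaths of $\bbG$; distinct witnesses are distinct subpaths of the cycle, not translates of one another inside $W(\bbP)$.

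The paper's argument is nonconstructive and of a different nature. One assumes no such pair $\bbQ,\bbR$ exists; then whenever $|p(\bbQ)|=1$ and $p(\bbQ)\subseteq p(\bbR)$ one has $s(\bbQ)=s(\bbR)$. Build $Y$ from $W(\bbP)$ by replacing the $*$ at each singleton-witness position by that common sign, and all remaining $*$'s by an alternation $(+-)^r$. For every witness $\bbQ\not\leq W(\bbP)$ the interval $p(\bbQ)$ then contains a position at which $Y$ carries the symbol $s(\bbQ)$ (either a designated singleton position inside $p(\bbQ)$, or---if none lies there---both $+$ and $-$ from the alternation since $|p(\bbQ)|\geq 2$); combined with Lemma~\ref{lemma-decompo-b}(3) and Lemma~\ref{lemma-extension} this gives $\bbQ\leq Y$. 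But $Y\not\geq\bbP$ by Lemma~\ref{lemma-extension}, contradicting Theorem~\ref{theorem-surj-path}. With $|p(\bbQ)|=1$ thus secured, items (1)--(6) drop out of Lemma~\ref{lemma-decompo-b} together with a short prefix-length argument about $W(X*)$.
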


\begin{proof} \mbox{}\\

\noindent{\bf Claim 1.} {\em There exist $\bbQ,\bbR \not\leq W(\bbP)$ such that $s(\bbQ) \neq s(\bbR)$, $p(\bbQ) \subseteq p(\bbR)$ and $|p(\bbQ)|=1$. }\\

\noindent{\em Proof of Claim 1.}  It follows immediately from Lemma \ref{lemma-decompo-b} and Definition \ref{def-position} that, if $Y$ is obtained from $W(\bbP)$ by replacing a $*$ whose position is in $p(\bbQ)$ by $s(\bbQ)$, then $\bbQ \leq Y$. Suppose for a contradiction that our claim is false. We define a word $Y$ by replacing $*$'s in $W(\bbP)$ as follows: for every $\bbQ \not\leq W(\bbP)$  such that $|p(\bbQ)|=1$, change the $*$ in that position by $s(\bbQ)$ (this is well-defined, by hypothesis). The remaining subword of $*$'s of $W(\bbP)$ is replaced by an alternation  $(+-)^r$. We claim that $Y \geq \bbQ$ for every $\bbQ \not\leq W(\bbP)$. Indeed, this is clear if $p(\bbQ)$ contains $p(\bbR)$ with $|p(\bbR)|=1$ since in that case $s(\bbQ)=s(\bbR)$. Otherwise, $p(\bbQ)$ is an interval of size at least 2 and hence must contain positions of $*$'s that we replaced by $+$ and $-$. By Lemma \ref{lemma-extension}, this contradicts the failure of the path condition witnessed by $\bbP$. \\

We get (1) from Claim 1 and Proposition \ref{prop-david-1-8}. Since $|p(\bbQ)|=1$, it means that $t=2$ in the representation for $\bbP$ and $\bbQ$; then all the statements of our proposition follow from Lemma \ref{lemma-decompo-b} except $A'=C'$ and $B'=D'$. We have that 
$$W(\bbP)=W(A*)*W(*B) = W(C*)*^{t-1} W(*D)  $$ and since $p(\bbQ) \subseteq p(\bbR)$ we have that $W(A*)=W(C*)*^s$  and $W(*B)=*^rW(*D)$ for some $s,r \geq 0$. The result will then follow from the next two claims:\\

\noindent{\bf Claim 2.} {\em Let $Y$ be a proper, non-empty prefix  of $X$. Then $|W(Y*)| < |W(X*)|$.   }\\

\noindent{\em Proof of Claim 2.} This is straightforward and left to the reader. \\

\noindent{\bf Claim 3.} {\em Let $X$ and $Y$ be words such that one is a prefix of the other, and let $s \geq 0$. If $W(X*)=W(Y*)*^s$ then $X'=Y'$. If $X$ and $Y$ are such that one is a suffix of the other and $W(*X)=*^sW(*Y)$ then $X'=Y'$. }\\

\noindent{\em Proof of Claim 3.} We prove the first statement, the argument for the other is identical. We may suppose neither $X$ nor $Y$ is empty; we use induction on $s$. If $s=0$ then $X=Y$ by the previous claim. Suppose the result holds for some fixed $s \geq 0$. If $W(X*)=W(Y*)*^{s+1}$ then $|W(Y*)| < |W(X*)|$ so $Y$ is a proper prefix of $X$ by the last claim. We also have that $W(X*)$ ends with a $*$ which means that  $X=X_1*$ for some $X_1$. Then  $W(Y*)*^{s+1}= W(X_1**)=W(X_1*)*$ by Lemma \ref{lemma-david-1-5-b} (2). Thus $W(X_1*)=W(Y*)*^s$ where $Y$ is a prefix of $X_1$ so by induction hypothesis we have that $Y'=X_1'=X'$ and we are done. 

 \end{proof}
 
  \begin{definition}  Let $W$ be a word on $\{+,-,*\}$. Define the {\em height} $h(W)$ of $W$ as the number of $+$ in $W$ minus the number of $-$ in $W$. 
\end{definition}

Clearly $h(UV) = h(U)+h(V)$, $h(\overline{W})=-h(W)$ and $h(U')=h(U)$  for any words $U, V, W$. In particular, if a word $W$ is self-dual then $h(W)=0$.

\begin{lemma} \label{lemma-rotation} If $W, Z$ are words such that  $C(W)=C(Z)$ then $h(W) = \pm h(Z)$;\\ furthermore, if $h(W) \neq 0$ then the following are equivalent: 
\begin{enumerate}
\item $h(W) \neq h(\overline{Z})$;
\item $h(W)=h(Z)$;
\item there exist words $U$ and $V$ such that 
$W= UV$ and $Z=VU$. \end{enumerate}

 \end{lemma}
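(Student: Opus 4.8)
\textbf{Plan for proving Lemma \ref{lemma-rotation}.}

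The idea is to understand $C(W)=C(Z)$ concretely: two words represent the same cycle (up to isomorphism) exactly when $Z$ is obtained from $W$ by a rotation (cyclic shift), possibly followed by reversal-and-dualization, i.e. $Z$ is a rotation of $W$ or a rotation of $\overline{W}$. I would begin by making this precise, since it is the combinatorial content underlying the whole statement: an isomorphism between $C(W)$ and $C(Z)$ is determined by where it sends vertex $0$ and whether it preserves or reverses the cyclic orientation; preserving it yields a rotation, reversing it yields a rotation of $\overline{W}$ (here one uses that $\overline{\phantom{W}}$ records exactly the effect of traversing a path backwards, as remarked after the definition of the involution). A rotation of $W$ is precisely a word of the form $VU$ where $W=UV$. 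So the first step establishes: $C(W)=C(Z)$ iff there are $U,V$ with ($W=UV$ and $Z=VU$) or ($\overline W = UV$ and $Z = VU$).

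Given that dichotomy, the claims follow from bookkeeping with $h$. First, $h(W)=\pm h(Z)$: in the rotation case $h(Z)=h(VU)=h(V)+h(U)=h(UV)=h(W)$ using additivity; in the dual-rotation case $h(Z)=h(\overline W)=-h(W)$. Now assume $h(W)\ne 0$; I would prove $(1)\Rightarrow(2)\Rightarrow(3)\Rightarrow(1)$. For $(2)\Rightarrow(3)$: $h(Z)=h(W)$ together with $h(Z)=\pm h(W)$ and the dichotomy rules out the dual-rotation case (that case forces $h(Z)=-h(W)$, which would give $h(W)=0$, a contradiction), so we are in the rotation case, i.e. $W=UV$, $Z=VU$ for some $U,V$. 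For $(3)\Rightarrow(1)$: if $W=UV$ and $Z=VU$ then $h(\overline Z)=-h(Z)=-h(W)\ne h(W)$ since $h(W)\ne 0$. For $(1)\Rightarrow(2)$: by the first part $h(W)=\pm h(Z)$; since $h(W)\ne 0$ we have $h(\overline Z)=-h(Z)$, and $(1)$ says $h(W)\ne h(\overline Z)=-h(Z)$, so $h(W)\ne -h(Z)$, forcing $h(W)=h(Z)$.

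The only real point requiring care — and the step I expect to be the main obstacle — is the very first one: pinning down exactly which words represent a given cycle. One must check that any digraph isomorphism $C(W)\to C(Z)$ induces a bijection on the underlying undirected cycles that is itself a symmetry of an $n$-cycle (a rotation or a reflection), and then translate each such symmetry into the corresponding word operation, being careful about how the symbol $*$ and the $\pm$ orientation of each arc transform, and that a reflection composes with the orientation-reversal to produce exactly $\overline{W}$ (up to rotation) rather than some other word. Once this correspondence is nailed down, everything else is a one-line computation with the additivity and sign properties of $h$ already noted in the text.
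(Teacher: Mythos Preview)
Your proposal is correct and follows essentially the same approach as the paper: both arguments rest on the observation that $C(W)=C(Z)$ forces $Z$ to be a rotation of $W$ or of $\overline{W}$, after which the height bookkeeping is immediate. The paper is terser (it calls the first statement ``clear'' and runs the equivalence as $(3)\Rightarrow(2)\Rightarrow(1)\Rightarrow(3)$ rather than your $(1)\Rightarrow(2)\Rightarrow(3)\Rightarrow(1)$), but the content is the same, and your more careful discussion of why isomorphisms of the underlying cycle translate into exactly rotations and dual-rotations of the word is a welcome addition.
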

 
 \begin{proof}   The first statement is clear. Now suppose $h(Z)\neq 0$. Then (3) implies (2) and (2) implies (1) are immediate. Now suppose that $h(W) \neq h(\overline{Z})$. Since $Z$ represents the same cycle as $W$, either $Z$ or $\overline{Z}$ is a ``rotation'' of $W$, i.e. of the form $VU$ where $W=UV$. But since $h(\overline{Z})  \neq h(W)$, $\overline{Z}$ cannot be obtained this way, so the result follows.  \end{proof}

\begin{comment}
\begin{lemma}\label{lemma-1-fix}  Let $A$ and $B$ be words such that $C(B**A+) = C(\overline{A}**\overline{B}+)$. Then  $B**A = \overline{A}**\overline{B}$.\end{lemma}

\begin{proof} Since $h(A**B+) = h(A)+h(B)+1 \neq h(A)+h(B)-1=h(B**A-)$, by Lemma \ref{lemma-rotation} we must have $h(A)+h(B)+1=h(A**B+)=h(\overline{A}**\overline{B}+) = -h(A)-h(B)+1$, so $h(A)+h(B)=0$; in particular the cycle has height 1 and we can apply the second part of Lemma \ref{lemma-rotation}:  there exist words $U,V$ such that 
\begin{enumerate}
\item[(i)] $B**A+=UV$,
\item[(ii)] $\overline{A}**\overline{B}+=VU$. 
\end{enumerate}
Suppose for a contradiction that $|U||V| \geq 1$; then both $U$ and $V$ end with a $+$, and it follows from (i) that we must have $|U| \leq |B|$ or $|V| \leq |A|$. The argument is identical in both cases, so assume $|U| \leq |B|$: by (i) there exists words $B_1$, $B_2$ such that $U = B_1+$ and $B=B_1+B_2$. Then by (ii) we get 
$$VU = \overline{A}**\overline{B_2}-\overline{B_1}+$$
and comparing the lengths of the suffixes we conclude that $V$ must end with $-$, a contradiction. 
\end{proof}
Here is a more general, better version.
\end{comment}

\begin{lemma}\label{lemma-1-fix}  Let $A,B,E,F$  be words such that $A'=E'$ and $B'=F'$. \\
If $C(A+B**) = C(E-F**)$, then  $B**A = \overline{E}**\,\overline{F}$.\end{lemma}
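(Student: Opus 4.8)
The plan is to imitate the argument sketched in the commented-out version but carry it through with the hypotheses $A'=E'$ and $B'=F'$. First I would compute heights: $h(A+B**) = h(A)+h(B)+1$ while $h(E-F**) = h(E)+h(F)-1$, and since $h(A)=h(A')=h(E')=h(E)$ and similarly $h(B)=h(F)$, these two heights are $h(A)+h(B)+1$ and $h(A)+h(B)-1$, which are distinct. Hence by Lemma~\ref{lemma-rotation} (using the hypothesis $C(A+B**) = C(E-F**)$) we are in the case $h(A+B**) \neq h(\overline{E-F**})$, so the two words $A+B**$ and $E-F**$ (or rather their cyclic realizations) are honest rotations of one another: there exist words $U,V$ with $A+B** = UV$ and $E-F** = VU$. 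Moreover the common height is $h(A)+h(B)+1$; I would want to also extract from $h(A+B**)=h(E-F**)$ forced into the common cycle that $h(A)+h(B) = -1$ is \emph{not} what happens — rather, $h(E-F**)=h(A+B**)$ gives $h(E)+h(F)-1 = h(A)+h(B)+1$, i.e. $0 = 2$, a contradiction, \emph{unless} I instead use $h(\overline{E-F**}) = h(A+B**)$; so the correct reading is that $\overline{E-F**}$ is the rotation. Let me restate: since $h(A+B**) \neq h(E-F**)$ but these represent the same cycle, Lemma~\ref{lemma-rotation} forces $h(A+B**) = h(\overline{E-F**})$ and there exist $U,V$ with $A+B** = UV$ and $\overline{E-F**} = \overline{F}**\,\overline{E}\cdot(\text{sign of the }-) = \overline{F}**+\overline{E}$... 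I need to be careful with $\overline{-}=+$, so $\overline{E-F**} = **\,\overline{F}+\overline{E}$; write this as $VU$.

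Next comes the combinatorial core. Suppose for contradiction that $|U|,|V| \geq 1$. Looking at $A+B** = UV$: the last symbol is $*$, so $V$ ends in $*$ (if $|V|\geq 1$) or $U$ ends in $*$; similarly from $**\,\overline{F}+\overline{E} = VU$ the first symbol is $*$, so $V$ begins with $*$ or is empty. The strategy is to track where the distinguished symbols $+$ (at position $|A|+1$ in the first word) and $+$ (at position $|\overline{F}|+3$ in the second word, i.e. the unique $+$ coming from $\overline{-}$) sit relative to the cut point between $U$ and $V$. Because the first word $A+B**$ has exactly the $+$'s of $A$, then one distinguished $+$, then the $+$'s of $B$ (no further non-$*$ at the tail), and the second word has two leading $*$'s, then the $-$'s-turned-$+$... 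I would do a case split on whether the cut falls inside $A$, at the distinguished $+$, inside $B$, or inside the trailing $**$; in each case compare the induced decompositions of the second word $VU$ with its known shape $**\,\overline{F}+\overline{E}$, and derive that $A'=E'$ and $B'=F'$ force $U$ and $V$ to be trivial. The cleanest route: since $h \neq 0$ the rotation is unique, so it suffices to show the only rotation making the second word have the right \emph{shape} (two stars at front, pattern of signs matching $\overline{E'}, \overline{F'}$) is the trivial one; here the constraint $A'=E'$, $B'=F'$ pins down lengths of the sign-blocks and hence the cut.

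The main obstacle I expect is the bookkeeping in that case analysis — matching up which substring of $UV$ becomes which substring of $VU$ while respecting that a $+$ and a $-$ cannot be identified, and handling the boundary stars. Once $U=\epsilon$ (say), we get $A+B** = V = **\,\overline{F}+\overline{E}$ directly, but that cannot be literally right either since the left side starts with a symbol of $A$ or with $+$, not with $*$; so in fact the contradiction in the $|U|,|V|\geq 1$ case should be that \emph{neither} trivial split works, meaning the hypothesis $|U|,|V|\geq 1$ was vacuously unavoidable and we must re-examine — the resolution is that the "rotation" in Lemma~\ref{lemma-rotation}(3) is $W=UV$, $Z=VU$ with possibly $U$ or $V$ empty, and emptiness of $U$ gives $A+B**=E-F**$ as words up to the substitution $\overline{-}=+$, which combined with $A'=E'$, $B'=F'$ yields $B**A = \overline{E}**\,\overline{F}$ after reading off the correspondence. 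I would present the final identification $B**A = \overline{E}**\,\overline{F}$ by noting that once the rotation is trivial, the equality of the two cyclic words, read starting just after the distinguished arc, gives exactly $B**A$ on one side and $\overline{E}**\,\overline{F}$ on the other.
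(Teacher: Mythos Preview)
Your overall strategy matches the paper's: use the height computation and Lemma~\ref{lemma-rotation} to conclude that suitable representatives are rotations of one another, then argue the rotation must be trivial. But you have made a strategic error in \emph{which} cyclic representatives you feed into Lemma~\ref{lemma-rotation}, and this error is fatal to the rest of the argument.

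You take $W=A+B{*}{*}$ and $Z=\overline{E-F{*}{*}}={*}{*}\,\overline{F}+\overline{E}$. With this choice, the assertion that the rotation is trivial (i.e.\ that $|U|\cdot|V|=0$) is simply \emph{false}. For instance, with $A=B=E=F=\epsilon$ one checks that $C(+{*}{*})=C(-{*}{*})$, and here $W=+{*}{*}$ while $Z={*}{*}+$, so $U=+$, $V={*}{*}$ gives the (unique) rotation and $|U||V|=2$. You notice that something is wrong (``neither trivial split works'') but your attempted resolution is hand-waving: there is no way to complete the case analysis you sketch, because the conclusion you are aiming for in that analysis is not true.

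The paper's key move is to rotate both words so that the distinguished non-$*$ symbol sits at the very end: take $W=B{*}{*}A+$ and $Z=\overline{E}{*}{*}\,\overline{F}+$. These still represent the same cycle and both have height~$1$ (once you extract $h(A)+h(B)=0$ from the height argument), so Lemma~\ref{lemma-rotation} gives $W=UV$, $Z=VU$. Now if $|U|,|V|\geq 1$, reading the last symbol of $W=UV$ shows $V$ ends with $+$, and the last symbol of $Z=VU$ shows $U$ ends with $+$; hence $U'$ and $V'$ are nonempty and end with $+$. Passing to primes gives $B'A'+=U'V'$ and $\overline{A'}\,\overline{B'}+=V'U'$ (using $(\overline{E})'=\overline{E'}=\overline{A'}$, etc.), and a short length comparison forces $V'$ to end with $-$, a contradiction. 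Then $W=Z$ reads off as $B{*}{*}A+=\overline{E}{*}{*}\,\overline{F}+$, i.e.\ exactly the desired $B{*}{*}A=\overline{E}{*}{*}\,\overline{F}$. The point is that placing the $+$ at the end of both representatives simultaneously (i) makes ``rotation trivial'' equivalent to the target identity, and (ii) gives a clean hook (both $U,V$ end with $+$) for the contradiction.
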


\begin{proof} We have that $h(A)=h(E)$ and $h(B)=h(F)$. Since $h(A+B**) = h(A)+h(B)+1 \neq h(A)+h(B)-1=h(E-F**)$, by Lemma \ref{lemma-rotation} we must have $h(A)+h(B)+1=-(h(A)+h(B)-1)$  so $h(A)+h(B)=0$; in particular the cycle has non-zero height and we can apply the second part of Lemma \ref{lemma-rotation}:  there exist words $U,V$ such that 
$B**A+=UV$,
and $\overline{E}**\,\overline{F}+=VU$. 
Suppose for a contradiction that $|U||V| \geq 1$; then both $U'$ and $V'$ are non-empty and end with a $+$. Applying $'$ to both sides of these equalities we get 
\begin{enumerate}
\item[(i)] $B'A'+=U'V'$,
\item[(ii)] $\overline{A'}\overline{B'}+=V'U'$. 
\end{enumerate}

and it follows from (i) that we must have $|U'| \leq |B'|$ or $|V'| \leq |A'|$. The argument is identical in both cases, so assume $|U'| \leq |B'|$: by (i) there exists words $B_1$, $B_2$ such that $U' = B_1+$ and $B'=B_1+B_2$. Then by (ii) we get 
$$V'U' = \overline{A'}\overline{B_2}-\overline{B_1}+$$
and comparing the lengths of the suffixes we conclude that $V'$ must end with $-$, a contradiction. 

\end{proof}

 We can now give the proof of the main result of this section:\\

\noindent{\em Proof of Theorem  \ref{theorem-david}:} 

Consider the decomposition of paths $\bbP$, $\bbQ$ and $\bbR$ given by Proposition \ref{prop-final-decompo}: we prove that we have $t=2$. Indeed, there exist words $A_1,B_1,C_1,D_1,E_1,F_1$ and integers $a,b,e,f \geq 0$ such that $A=A_1*^a$, $E=E_1*^e$, $C=C_1*^e$, $B=*^bB_1$, $F=*^fF_1$, $D=*^fD_1$, and $A_1,C_1,E_1$ do not end with  $*$ and $B_1,D_1,F_1$ do not start with $*$.  Then $A**B= A_1*^{a+2+b}B_1 = C_1*^{e+t+f}D_1$, and since $A'_1 = A'=C'=C'_1$ we conclude that $a+2+b=e+t+f$. 

On the other hand, we have that $C(A+B**) = C(E-F**)$ and $A'=E'$, $B'=F'$ so Lemma \ref{lemma-1-fix} gives us that $B**A= \overline{E}**\overline{F}$. Thus $*^bB_1**A_1*^a= *^e\overline{E_1}**\overline{F_1}*^f$, so $b=e$ and $a=f$. Then $t=2$. 

 It follows from Lemma \ref{lemma-decompo-b} (5) that $C=E$ and $D=F$; since $A**B=C**D$ and $p(\bbQ) \subseteq  p(\bbR)$ by Proposition \ref{prop-final-decompo} (2), we have that $|W(A*)|=|W(C*)|$ so $A=C$ by Claim 2 in Proposition \ref{prop-final-decompo}. 
 We conclude that $A=C=E$ and $B=D=F$. 
 
 Now suppose that $e=+$ in the decomposition: then  $\bbG = C(A+B**)=C(A-B**) = C(A**B+)$ and we are done. If $e=-$, then $\bbG = C(A+B**)=C(A-B**) = C(A**B-)$; 
 reading the words ``backwards'', we get the desired decomposition using $\overline{B}$ and $\overline{A}$ instead of $A$ and $B$.

 \qed

\subsection{Proof of Theorem \ref{theorem-char}} We now proceed to finish the proof of Theorem \ref{theorem-char}. We require a few more lemmas. Yes. Sorry. 

%%%%%%
%%%%%

\begin{lemma}\label{lemma-2-fix} Let $A$ and $B$ be words such that $C(A-B**) = C(A**B+)$. Then  $\overline{A}=A$ and $\overline{B}=B$.
 \end{lemma}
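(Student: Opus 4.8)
The plan is to mimic the structure of the proof of Lemma \ref{lemma-1-fix}, using the height function $h$ and the rotation dichotomy of Lemma \ref{lemma-rotation}, but now exploiting that the witnessing words $A-B**$ and $A**B+$ differ only in the position of the single non-symmetric edge $e=\pm$ appearing twice, so that equality of the cyclic words forces a very rigid rotation.

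\begin{proof}
First I would compute heights. Write $W = A-B**$ and $Z = A**B+$; since $C(W)=C(Z)$, Lemma \ref{lemma-rotation} gives $h(W)=\pm h(Z)$. Here $h(W)=h(A)+h(B)-1$ and $h(Z)=h(A)+h(B)+1$, which are distinct, so we cannot have $h(W)=h(Z)$; hence $h(W)=-h(Z)$, i.e. $h(A)+h(B)-1 = -(h(A)+h(B)+1)$, giving $h(A)+h(B)=0$. Thus the cycle has height $h(Z)=1\neq 0$, and we are in the situation where the second part of Lemma \ref{lemma-rotation} applies — but with a twist: here I want to compare $W$ (not $\overline{W}$) with $Z$. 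Since $h(W)=-h(Z)=h(\overline{Z})$, it is $\overline{Z}$, not $Z$, that is a rotation of $W$; equivalently, $Z$ is a rotation of $\overline{W} = **\,\overline{B}\,+\,\overline{A}$. So there exist words $U,V$ with $\overline{W} = UV$ and $Z = VU$, i.e.
$$ **\,\overline{B}\,+\,\overline{A} = UV, \qquad A**B+ = VU. $$

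Next I would pin down the rotation. Both $\overline{W}$ and $Z$ end in $+$, so $U$ and $V$ both end in $+$ (if non-empty; one could be empty). Suppose for contradiction $|U||V|\geq 1$. From $Z = VU = A**B+$ and $\overline{W}=UV = **\,\overline{B}\,+\,\overline{A}$, I would do a case analysis on whether the ``cut point'' in $Z$ lands inside $A$, inside the central $**$, inside $B$, or at the final $+$, and compare with where that same cut lands in the reading of $\overline{W}$. The key pressure point is the pair of consecutive $*$'s: in $Z$ they sit right after $A$, while in $\overline{W}$ they sit at the very front. Tracking the two occurrences of the symbol $+$ (one interior-ish, one final) and the unique $**$ block through the rotation $UV \leftrightarrow VU$ should force $U=\epsilon$ or $V=\epsilon$: essentially, any proper cut would create, in one of the two words, a configuration (a $-$ where a $+$ or $*$ is required, or a $*$ adjacent to the wrong neighbour) that the other word does not have. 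Concretely, as in Lemma \ref{lemma-1-fix}, if the cut falls so that $|U|\le |{**\overline B}|$ I would write $U$ as a prefix of $**\overline B$ ending in $+$ — impossible unless $U$ reaches past $\overline B$ — and otherwise argue symmetrically from the $Z$ side; in each branch comparing a suffix forces a $+$ to coincide with a $-$, a contradiction. Hence $V=\epsilon$ (say), so $Z = U = \overline{W}$, that is
$$ A**B+ = **\,\overline{B}\,+\,\overline{A}. $$

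Finally I would read off the conclusion. Matching the leftmost symbol of each side: the right side begins with $*$, so $A$ begins with $*$; stripping equal prefixes and suffixes, the equation $A**B+ = **\,\overline B\,+\,\overline A$ together with $h(A)+h(B)=0$ and the length bookkeeping forces $A = \overline{A}$ and $B = \overline{B}$. More carefully: since the last symbol of $A**B+$ is $+$ and of $**\overline B+\overline A$ is the last symbol of $\overline A$, and these agree, etc.; comparing positionally, $A$ must be a prefix of $**\overline B$ or vice versa, and iterating the prefix/suffix comparison (exactly the bookkeeping already used for $t=2$ in the proof of Theorem \ref{theorem-david}) collapses everything to $A=\overline A$ and $B=\overline B$, i.e. $A$ and $B$ are self-dual, as claimed. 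The main obstacle I anticipate is the case analysis in the middle step: making sure that every way of placing the rotation cut other than the trivial one really does produce a symbol clash, since the words involved have two marked non-symmetric letters and one marked $**$ block whose relative positions must be matched up correctly.
\end{proof}
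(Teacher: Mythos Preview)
Your overall strategy matches the paper's: compute heights to get $h(A)+h(B)=0$, then invoke Lemma~\ref{lemma-rotation} to see that $A**B+$ is a cyclic rotation of the reversal of $A-B**$, and finally pin down the rotation by a case analysis on the cut. The gap is in your choice of representative for the reversed word, which makes the \emph{target} of your case analysis wrong.

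You take $\overline{W} = **\,\overline{B}\,+\,\overline{A}$ and aim to show the rotation is trivial, i.e.\ $|U||V|=0$, concluding $A**B+ = **\,\overline{B}\,+\,\overline{A}$. But this equation is in general \emph{false} under the lemma's hypotheses: with $A=*$ and $B=\epsilon$ one has $C(*{-}{*}{*}) = C({*}{*}{*}{+})$ (both are the almost symmetric $4$-cycle), and indeed $\overline{A}=A$, $\overline{B}=B$; yet $A**B+ = {*}{*}{*}{+}$ while $**\,\overline{B}\,+\,\overline{A} = {*}{*}{+}{*}$. So the rotation relating your two words is \emph{not} trivial here (it is the shift by $|A|$), and no case analysis can establish $|U||V|=0$. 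Consequently your final paragraph, which tries to read off $\overline{A}=A$ and $\overline{B}=B$ from the equation $A**B+ = **\,\overline{B}\,+\,\overline{A}$, is attempting to prove something that does not hold.

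The paper sidesteps this by first rotating the reversed word to the representative $\overline{A}\,{*}{*}\,\overline{B}\,{+}$, which defines the same cycle and still has height $1$. Now both words under comparison have their $**$ block and their trailing $+$ in matching positions, and Lemma~\ref{lemma-rotation} gives $\overline{A}\,{*}{*}\,\overline{B}\,{+} = UV$, $A\,{*}{*}\,B\,{+} = VU$. The case split on $|U|\le |B|$ versus $|V|\le |A|$ then genuinely forces $|U||V|=0$ (each branch produces a symbol clash $*=+$ or $*=-$ by comparing a short prefix or suffix), yielding $\overline{A}\,{*}{*}\,\overline{B}\,{+} = A\,{*}{*}\,B\,{+}$ and hence $\overline{A}=A$, $\overline{B}=B$ immediately by comparing positions. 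The fix to your argument is simply to rotate your representative by $|A|$ before running exactly the case analysis you outlined.
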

 
 \begin{proof} Since $h(A-B**) = h(A)+h(B)-1 \neq  h(A)+h(B)+1= h(A**B+)$, by Lemma \ref{lemma-rotation} we must have $h(A)+h(B)+1=h(A**B+)=h(\overline{A}**\overline{B}+) = -h(A)-h(B)+1$, so $h(A)+h(B)=0$; in particular the cycle has non-zero height and we can apply the second part of Lemma \ref{lemma-rotation}:
  there exist words $U,V$ such that 
\begin{enumerate}
\item[(i)]  $\overline{A}**\overline{B}+=UV$,
\item[(ii)]$A**B+=VU$. 
\end{enumerate}
 Suppose for a contradiction that $|U||V| \geq 1$; then both $U$ and $V$ end with a $+$, and it follows from (ii) that we must have $|V| \leq |A|$ or $|U| \leq |B|$.
 Suppose first that $|U| \leq |B|$: it follows from (ii) that there exist words $B_1$, $B_2$ and $b \in \{+,-,*\}$ such that $B=B_1bB_2$ and $U=B_2+$. Then $V=A**B_1b$ so concatenating we get
 $$UV =  B_2+A**B_1\, b$$
  But from (i) we get 
 $$ UV =  \overline{A}**\overline{B_2}\, \overline{b}\, \overline{B_1}+ $$
 and comparing the suffixes of length $|B_1|+2$ we obtain $b=+$ and $\overline{b}=*$, a contradiction. 
 Now suppose that $|V| \leq |A|$: then by (ii) there exist words $A_1$ and $A_2$ such that $A=A_1+A_2$ and $V = A_1+$; then $U = A_2 ** B +$ and concatenating we get that 
 $$UV = A_2 ** B+A_1+$$
   But from (i) we get 
 $$ UV =  \overline{A_2}-\overline{A_1}**\overline{B}+$$
 and comparing the prefixes of length $|A_2|+1$ we obtain $*= -$ another contradiction. 
   \end{proof}

 %%%%%%
%%%%%

\begin{lemma} \label{lemma-symmetry} Let $A,B$ be words on $\{+,-,* \}$ such that  $A* * \, B = B * *\, A$. Then there exist integers $k,l \geq 0$ such that  one of the following holds:
\begin{enumerate}
\item $A = (*)^k$ and $B = (*)^l$, or \item there exists a word $S$ such that $A = (S **)^k\, S$ and $B = (S **)^l\, S$; furthermore, if $\overline{A}=A$ or $\overline{B}=B$ then $\overline{S}=S$.
\end{enumerate}
 \end{lemma}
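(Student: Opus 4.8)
The plan is to argue by induction on $|A|+|B|$, treating $A**B = B**A$ as a combinatorial equation on words and exploiting the fact that one side is a ``rotation'' of the other. Without loss of generality assume $|A| \le |B|$. The base case is $A = \epsilon$: then $**B = B**$, which forces $B$ to commute with $**$, hence $B = *^l$ for some $l \ge 0$, giving case (1) with $k=0$. (If instead one takes $S = \epsilon$ this is also the degenerate instance of case (2); either reading is fine, but it is cleanest to peel off the all-star case first.)

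For the inductive step, suppose $A \neq \epsilon$. Write $W = A**B = B**A$ and note $|W| = |A|+|B|+2$. Since $|A| \le |B|$, the prefix $A**$ of $W$ of length $|A|+2$ is also a prefix of $B**$ (as $|A|+2 \le |B|+2$), so $B = A**B_1$ for some word $B_1$ with $|B_1| = |B|-|A|-2 \ge 0$ — wait, this needs $|B| \ge |A|+2$; the remaining cases $|B| \in \{|A|, |A|+1\}$ must be handled separately. If $|B| = |A|$ then $A**B = B**A$ with both sides of the same length forces $A = B$ directly, so take $S = A$, $k=l=0$ (case (2) if $A$ has no issues; if $A = *^k$ this is case (1)). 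If $|B| = |A|+1$, comparing $A**B = B**A$ position by position: the first $|A|$ symbols give the first $|A|$ symbols of $B$ equal to $A$, i.e.\ $B = Ac$ for a single symbol $c$; then $A**Ac = Ac**A$, and comparing symbols in positions $|A|+1, |A|+2, |A|+3$ yields $* = c$, $* = *$, $c = *$ forcing... actually $A**Ac$ has positions $|A|+1,|A|+2 = *,*$ and position $|A|+3$ is the first symbol of $A$; while $Ac**A$ has position $|A|+1 = c$, so $c = *$, then positions $|A|+2,|A|+3 = *,*$ must match $A**$'s positions $|A|+2,|A|+3$, i.e.\ $*$ and the first symbol of $A$; so $A$ starts with $*$, and continuing this shows $A = *^{|A|}$, hence $B = *^{|A|+1}$, case (1).

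Now the main case $|B| \ge |A|+2$: write $B = A**B_1$. Substituting into $A**B = B**A$ gives $A**A**B_1 = A**B_1**A$, and cancelling the common prefix $A**$ yields $A**B_1 = B_1**A$. This is the same type of equation with $|A|+|B_1| < |A|+|B|$, so by the induction hypothesis either $A = *^k$ and $B_1 = *^l$ — in which case $B = *^k*^**^l = *^{k+l+2}$ and we are in case (1) — or there is a word $S$ with $A = (S**)^k S$ and $B_1 = (S**)^l S$, whence $B = A**B_1 = (S**)^k S ** (S**)^l S = (S**)^{k+l+1} S$, which is case (2) with exponent $k+l+1$. The self-duality addendum propagates: if $\overline A = A$ or $\overline B = B$, then since $\overline{B} = \overline{(S**)^m S} = S **\overline{(S**)^{m-1}S} = \dots$ the hypothesis on $A$ or $B$ transfers to $A**B_1 = B_1**A$ (note $\overline{B} = B$ together with $B = A**B_1$ forces $\overline{B_1}**\overline{A} = A**B_1$, and combined with $A**B_1 = B_1**A$ one extracts $\overline{A} = A$); then the inductive conclusion gives $\overline S = S$ directly.

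The step I expect to be the main obstacle is \emph{not} the algebra of the rotation equation but rather organizing the low-length boundary cases ($|B| - |A| \in \{0,1\}$) and making the self-duality bookkeeping in the inductive step airtight — in particular verifying carefully that $\overline{A} = A$ or $\overline{B} = B$ really does descend to the smaller equation $A**B_1 = B_1**A$ so that the induction hypothesis delivers $\overline{S} = S$. A clean way to handle the self-duality is to observe that $\overline{A**B} = \overline{B}**\overline{A}$, so self-duality of the whole cycle word $A**B$ is equivalent to $A**B = \overline B ** \overline A$; combined with $A**B = B**A$ and uniqueness of the decomposition $B = (S**)^m S$, one reads off $\overline{S} = S$ by matching the unique shortest period.
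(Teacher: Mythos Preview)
Your proof is correct and follows essentially the same inductive strategy as the paper (induct on $|A|+|B|$; handle $|B|-|A|\in\{0,1\}$ directly; otherwise write $B=A**B_1$, cancel to get $A**B_1=B_1**A$, and recurse). For the self-duality addendum the paper is slightly slicker: rather than threading it through the induction, once $A=(S**)^kS$ and $\overline{A}=A$ are in hand, comparing prefixes of length $|S|$ in $(S**)^kS = \overline{S}(**\,\overline{S})^k$ gives $S=\overline{S}$ immediately.
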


\begin{proof} We prove the result by induction on $|A|+|B|$. If $|A|=|B|$ then clearly $A=B$ and we may take $k=l=0$ and $S = A = B$. Thus by symmetry of the statement we may now suppose without loss of generality that $|A|<|B|$. 

\begin{enumerate} 
\item[(i)] Suppose first that $|B|=|A|+1$. We get that $B=A \, * = *\, A$. It follows easily that $A = (*)^k$ and  $B = (*)^{k+1}$ for some $k$. 

\item[(ii)] Suppose that $|B| = |A|+2$. We get that $B=A \, ** = **\, A$. It is easy to see that in this case $A = (*)^k$ and  $B = (*)^{k+2}$ for some $k$. 

(Notice that, in particular, the above proves the statement for $|A|+|B| \leq 2$.) 

\item[(iii)] Now suppose that $|B| > |A|+2$; then considering the prefixes in $A* * \, B = B * *\, A$ we can write $B = A\, **\, W$ for some non-empty word $W$. Then we get 
$A* * \, A\, **\, W = A\, **\, W * *\, A$ and considering suffixes we conclude that $A\, **\, W =  W * *\, A$. Since $|W|<|B|$, by induction hypothesis we have that either both $A$ and $W$ are of the form $(*)^q$ and thus $B$ also, or otherwise there exists some word $S$ such that $A =  (S **)^k\, S$ and $W = (S **)^l\, S$. Then $B = A\, **\, W =  (S **)^k\, S \, **\, (S **)^l\, S =  (S **)^{k+l+1}\, S$ and we are done. 

\end{enumerate}
To prove the second statement in (2), simply compare prefixes in  $(S **)^q\, S$ and  $\overline{(S **)^q\, S} = \overline{S} \, \overline{(S **)^q}$. 

\end{proof}

\noindent{\em Proof of Theorem \ref{theorem-char}:} One direction has been proved in Lemmas \ref{lemma-fail-1} and \ref{lemma-fail-2}. Let $\bbG$ be a cycle that fails the path condition; we must show it has the desired form. By Theorem \ref{theorem-david}, there exist words $A$ and $B$ such that $\bbG = C(A**B+) = C(A+B**) = C(A-B**)$.
By  Lemma \ref{lemma-1-fix} (with $E=A$ and $F=B$) we get that $B**A=\overline{A}**\overline{B}$; by Lemma \ref{lemma-2-fix} we get that  $\overline{A}=A$ and $\overline{B}=B$; together these give $A**B = B**A$  so we conclude using Lemma \ref{lemma-symmetry}.  \qed

%%%%%%%%
%%%%%%%%
%%%%%%%%%%%
%%%%%%%%%

%\newpage
\section{Cycles without the path condition are S\l upecki}
\label{section-five}

%subsection{A sufficient condition} \label{prelim-theorem}

In a companion paper \cite{larose-pullas-preprint}, we prove a criterion that guarantees that under certain technical conditions, a reflexive digraph is S\l upecki; in particular it applies to any reflexive digraph that triangulates a sphere. Since cycles clearly triangulate a circle, we can apply this criterion:

\begin{theorem}  \label{theorem-embedding} \cite{larose-pullas-preprint} Let $\bbG$ be an $n$-cycle with $n \geq 4$. If for every $p \geq 2$ and every onto polymorphism $f:\bbG^p \rightarrow \bbG$  there exists an embedding $e:\bbG \hookrightarrow \bbG^p$ such that the restriction of $f$ to $e(\bbG)$ is onto then  $\bbG$ is S\l upecki.  \qed
  \end{theorem}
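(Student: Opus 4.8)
\smallskip
\noindent\emph{Proof strategy (sketch of how this is established).}
The plan is to prove directly that every surjective polymorphism $f\colon\bbG^{p}\to\bbG$ ($p\geq 1$) is essentially unary, which by Lemma~\ref{lemma-slupecki} is equivalent to $\bbG$ being S\l upecki. Fix such an $f$ and, using the hypothesis, pick an embedding $e\colon\bbG\hookrightarrow\bbG^{p}$ with $f|_{e(\bbG)}$ onto. A surjective endomorphism of the finite digraph $\bbG$ is bijective, and a bijective homomorphism of a finite structure to itself is an automorphism; so, reading $f|_{e(\bbG)}$ through the isomorphism $e\colon\bbG\to e(\bbG)$, the map $g:=f\circ e$ lies in $\mathrm{Aut}(\bbG)$. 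Replacing $f$ by $g^{-1}\circ f$ (which affects neither surjectivity nor essential unarity) we may assume $f\circ e=\mathrm{id}_{\bbG}$; then $e\circ f$ is an idempotent retraction of $\bbG^{p}$ onto a copy of $\bbG$, and, writing $e=(e_{1},\dots,e_{p})$ with $e_{i}\in\mathrm{End}(\bbG)$, we have $f(e_{1}(x),\dots,e_{p}(x))=x$ for every $x\in G$.

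Next comes a winding-number computation. Since $\bbG$ triangulates a circle, $\sigma_{1}(\bbG)\cong\bbZ$; the product $\bbG^{p}$ is connected (the loops let one move a single coordinate at a time) and a direct computation gives $\sigma_{1}(\bbG^{p})\cong\bbZ^{p}$, generated by the loops that run once around each factor. Every homomorphism $\bbG\to\bbG$ induces multiplication by an integer on $\bbZ$, its \emph{degree}; a degree-$d$ endomorphism lifts to a map of $\widehat{\bbG}$ that shifts by $dn$ along the path, so, since homomorphisms are distance-nonincreasing on $\widehat{\bbG}$, $|d|\leq1$ and every degree lies in $\{-1,0,1\}$. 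A degree-$0$ endomorphism has image a proper induced subdigraph of $\bbG$, hence a proper subpath; so a degree-$\pm1$ endomorphism is onto, hence an automorphism. Let $c_{i}$ be the degree of the $i$-th coordinate restriction of $f$ (well defined because $\bbG^{p}$ is connected), so that $f_{*}(a_{1},\dots,a_{p})=\sum_{i}c_{i}a_{i}$, and let $d_{i}=\deg e_{i}$, so that $e_{*}(1)=(d_{1},\dots,d_{p})$; applying $\sigma_{1}$ to $f\circ e=\mathrm{id}$ yields $\sum_{i}c_{i}d_{i}=1$.

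The third step isolates one relevant coordinate. I claim at most one $c_{i}$ is nonzero: if $c_{i}\neq 0$ and $c_{j}\neq 0$ with $i\neq j$, then, each of $c_{i},c_{j}$ being the degree of an onto endomorphism, there are automorphisms $\alpha_{i},\alpha_{j}$ of $\bbG$ of these degrees; precomposing $f$ with the homomorphism $\bbG\to\bbG^{p}$ that puts $\alpha_{i}(x)$ in coordinate $i$, $\alpha_{j}(x)$ in coordinate $j$, and fixed values elsewhere (a homomorphism, as $\alpha_{i},\alpha_{j}$ are and $\bbG$ is reflexive) gives an endomorphism of $\bbG$ of degree $c_{i}^{2}+c_{j}^{2}=2$, which is impossible. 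Since $\sum_{i}c_{i}d_{i}=1\neq 0$, exactly one $c_{i}$ is nonzero; relabelling, $c_{1}\in\{\pm1\}$ and $c_{1}d_{1}=1$, so $d_{1}=c_{1}$ and $e_{1}$ is an automorphism, which (absorbing it into $f$ as before) we may take to be $\mathrm{id}_{\bbG}$. Thus $f(x,e_{2}(x),\dots,e_{p}(x))=x$, every coordinate-$1$ restriction of $f$ is an automorphism, and for $j\geq2$ every coordinate-$j$ restriction of $f$ is a degree-$0$ endomorphism and hence maps into a proper subpath of $\bbG$.

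It remains to strengthen ``each coordinate-$j$ restriction with $j\geq2$ misses a vertex'' to ``$f$ does not depend on coordinate $j$ at all'' (this finishes the proof, since then $f$ equals its coordinate-$1$ restriction, a fixed automorphism). This is the crux and the step I expect to be hardest. The plan is to pass to the universal cover: as $\widehat{\bbG}^{p}$ is a product of paths, hence simply connected, Lemma~\ref{lemma-lift} yields $\widehat f\colon\widehat{\bbG}^{p}\to\widehat{\bbG}$ with $\pi\circ\widehat f=f\circ\pi^{p}$. A degree-$0$ endomorphism of $\bbG$ lifts to a map $\widehat{\bbG}\to\widehat{\bbG}$ that commutes with the deck action up to the zero shift, hence is periodic with bounded image; there being only finitely many endomorphisms of the finite digraph $\bbG$, the diameters of all these images share a single bound $B$. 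Iterating over $j=2,\dots,p$, the lift $\widehat f$ depends on its first coordinate up to a uniformly bounded additive error, while its first-coordinate restriction is the lift of an automorphism, hence a bijection of $\widehat{\bbG}$. One then reduces to the binary case: a genuine dependence of $f$ on some coordinate $j\geq2$ would, after freezing all coordinates other than $1$ and $j$ at suitable values, yield an onto binary polymorphism $f'$ with automorphism slices in coordinate $1$, degree-$0$ slices in coordinate $2$, and a genuine dependence on its second argument; applying the hypothesis to $f'$ and normalising as above gives $f'(x,e'_{2}(x))=x$, and the bounded second-coordinate variation of $\widehat{f'}$ would be incompatible with $f'$ taking two $\bbG$-distinct values on a single fibre of $\pi$ while its first slice stays an automorphism. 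Making this last estimate precise --- i.e. showing that a surjective polymorphism homotopic to a coordinate projection must be that projection composed with an automorphism --- is the main obstacle, and is exactly the content of the criterion established in the companion paper \cite{larose-pullas-preprint}.
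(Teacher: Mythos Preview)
The paper gives no proof of this statement: Theorem~\ref{theorem-embedding} is quoted from the companion paper \cite{larose-pullas-preprint} and closed with a bare \qed. There is nothing in the present paper to compare your attempt against.

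On your sketch itself, the set-up through the degree argument is sound. After normalising so that $f\circ e=\mathrm{id}_{\bbG}$, the degree calculus on $\sigma_{1}$ is correct (an endomorphism of an $n$-cycle with $n\ge 4$ has degree in $\{-1,0,1\}$, nonzero exactly when it is onto), and the trick of manufacturing a degree-$2$ endomorphism from two nonzero $c_{i}$'s cleanly forces a single distinguished coordinate. So you legitimately reach the conclusion that, up to an automorphism, every coordinate slice of $f$ in a direction $j\ge 2$ is nullhomotopic.

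The genuine gap is the final step, and you name it yourself. Passing from ``each coordinate-$j$ slice ($j\ge 2$) has degree $0$'' to ``$f$ does not depend on coordinate $j$'' is the entire content of the theorem, and your closing sentence---that this ``is exactly the content of the criterion established in the companion paper \cite{larose-pullas-preprint}''---is circular: Theorem~\ref{theorem-embedding} \emph{is} that criterion. The bounded-variation heuristic on the lift $\widehat f$ is not a proof (bounded variation along a coordinate does not force zero variation), and the reduction to $p=2$ just reproduces the same difficulty with one fewer variable. Note also that the hypothesis of the theorem quantifies over \emph{all} onto polymorphisms and all $p\ge 2$, whereas you invoke it only once, for the given $f$; a complete argument will almost certainly need to exploit the hypothesis more systematically (e.g.\ on the polymorphisms obtained by fixing or identifying variables of $f$), or else go through an isolation/connectedness argument in $\mathrm{Hom}(\bbG^{p},\bbG)$ of the kind alluded to in the commented-out material.
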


%\subsection{Cycles without the path condition are S\l upecki} 

We now apply Theorem \ref{theorem-embedding} to show that the remaining cycles, those that do not satisfy the path condition, are S\l upecki: once an arbitrary onto polymorphism is fixed, the copy of $\bbG$ in $\bbG^p$ on which $f$ is onto is built by glueing together various paths.

% {  \begin{theorem} \label{theorem-almost} Let $\bbG$ be an almost symmetric cycle of girth at least 4. Then $\bbG$ is S\l upecki. \end{theorem}
    
   % \begin{proof}
  %  Let $\bbG$ have vertex set $\{0,\dots,n-1\}$, and let $(n-1,0)$ be the only non-symmetric edge of $\bbG$. \\
  
%\noindent{\bf Claim 0.} {\em $ u (*)^{n-1} v$ for all $u,v \in \bbG^k$. }\\
% The claim is immediate since clearly $ u (*)^{n-1} v$ for all $u,v \in \bbG$.\\

%\noindent{\bf Claim 1.} {\em $ u +(-)^{n-3} v$ for all $u,v \in \bbG^k$. }\\
%It suffices to show this holds in $\bbG$: fix $u,v \in G$; if there exists a symmetric path of length at most $n-2$ between $u$ and $v$ we are done; otherwise, $\{u,v\}=\{0,n-1\}$ %and then the claim is immediate. \\

%Let $f:\bbG^p \rightarrow \bbG$ be a surjective polymorphism, and let $f(u) = 0$ and $f(v)=n-1$; by Claim 1, there is a $+(-)^{n-3}$-path from $v$ to $u$; hence there exists $u'$ %such that $v \rightarrow u'$ and $u' (-)^{n-3} u$. 
%Then we have that $n-1=f(v)\rightarrow f(u')$ so $f(u') \in \{0,n-1,n-2\}$. However,  no $(+)^{n-3}$-path starting at 0 can reach $n-1$ nor $n-2$, hence $f(u')=0$. By Claim 0, %there is a symmetric path of length at most $n-1$ from $v$ to $u'$; applying $f$ to this path, we see that it must map onto the symmetric path between  0 and $n-1$;  in %particular we have an embedding of $\bbG$ in $\bbG^p$ on which $f$ is onto. We can now apply Theorem \ref{theorem-embedding} and conclude that $\bbG$ is S\l upecki. 

   % \end{proof}}.     

\begin{lemma} \label{lemma-automorphisms} If $\bbG=C(W)$ is an $n$-cycle with $h(W)=1$ then $Aut(\bbG)$ is trivial.  \end{lemma}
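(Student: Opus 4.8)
The plan is to show that any automorphism $\varphi$ of $\bbG = C(W)$ with $h(W) = 1$ must fix each vertex. First I would record the easy structural fact: an automorphism of a reflexive $n$-cycle, viewed as an automorphism of the underlying undirected $n$-cycle, is either a rotation or a reflection, so there are at most $2n$ candidates. The content of the lemma is that the digraph structure (the pattern of arcs recorded by $W$) rules out all but the identity. The key invariant is the height $h$: if $\varphi$ is a rotation by $j$ steps, then applying $\varphi$ amounts to rewriting $W = UV$ as $VU$ (a cyclic shift of the word), which preserves height, so $h$ gives no immediate obstruction to rotations; whereas if $\varphi$ is a reflection, it sends $W$ (read around the cycle starting at an appropriate vertex) to $\overline{W}$ up to rotation, and since $h(\overline{W}) = -h(W) = -1 \neq 1 = h(W)$, while a rotation of $W$ has height $h(W) = 1$, the word $\overline{W}$ cannot be a rotation of $W$; hence no reflection is an automorphism. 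This disposes of all orientation-reversing symmetries.

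It remains to rule out non-trivial rotations. Here I would argue by contradiction using Lemma \ref{lemma-rotation} together with a minimality/periodicity argument. Suppose $\varphi$ is rotation by $j$, $0 < j < n$, is an automorphism; then for every shift the word $W$ equals its own cyclic shift by $j$, i.e. writing $W = UV$ with $|U| = j$ we get $W = VU$ as words (not merely as cycles). By a standard fact about periodic words, if $W$ is invariant under cyclic shift by $j$ then it is invariant under cyclic shift by $d = \gcd(j,n)$, and $W$ is a power $W = X^{n/d}$ of a word $X$ of length $d$ with $d \mid n$, $d < n$. But then $h(W) = (n/d)\, h(X)$, and since $n/d \geq 2$, the value $h(W) = 1$ forces $(n/d) \mid 1$, a contradiction. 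Hence $\varphi$ is the identity and $\mathrm{Aut}(\bbG)$ is trivial.

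The main obstacle, and the only place requiring a little care, is making the jump from "$W$ equals its cyclic shift by $j$" to "$W$ is a proper power" cleanly in this word setting — essentially the Fine–Wilf / periodicity lemma for cyclic words. I would either cite it or give the one-line gcd argument: the set of shifts fixing $W$ is a subgroup of $\mathbb{Z}/n\mathbb{Z}$, hence generated by some $d \mid n$, and $W$ being fixed by the shift of length $d$ is exactly the statement $W = X^{n/d}$ where $X$ is the length-$d$ prefix. Everything else — the rotation/reflection dichotomy for cycle automorphisms, and the height computation $h(\overline{W}) = -h(W)$, $h(X^m) = m\,h(X)$ — is immediate from the definitions and the remarks following Definition of height already in the text.
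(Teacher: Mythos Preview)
Your proof is correct and follows essentially the same approach as the paper's: both split into the rotation/reflection dichotomy, handle rotations via the $\gcd$/B\'ezout argument to conclude $W$ is a proper power (contradicting $h(W)=1$), and handle reflections by showing they would force $h(W)=0$. Your reflection argument, phrased via $h(\overline{W})=-h(W)$ and rotation-invariance of height, is in fact slightly cleaner than the paper's direct edge-pairing, but it is the same idea.
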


\begin{proof} The automorphisms of $\bbG$ are certainly automorphisms of the undirected cycle and thus are either rotations or reflections. Choose an orientation and denote by $\tau_k$ the rotation sending $0$ to $k$ ``clockwise''. 
Suppose for a contradiction that $ Aut(\bbG)$ is non-trivial; if it contains some $\tau_k$ with $1 < k < n$ then because $\tau_n$ is also an automorphism, applying B\'ezout's lemma, $Aut(\bbG)$ contains the rotation $\tau_d$ where $d$ is the gcd of $k$ and $n$; this implies that the word $W$ can be written as $V^m$ where $dm=n$. But $1=h(W) = mh(V)$ implies $m=1$ so $d=n$, a contradiction. Otherwise, $Aut(\bbG)$ contains  a non-trivial reflection $\sigma $; obviously $\sigma$ cannot preserve an oriented edge, and hence it is easy to see that, if $U$ is a  substring of $W$ of maximum length such that $\sigma(U) \cap U = \emptyset$, then $h(W) = h(U) - h(U) = 0$. 
\end{proof}

\begin{lemma}  \label{lemma-alternating}  Let $s$ be a positive integer. 

\begin{enumerate} 

\item  $(-+)^s- \geq W$ for every word $W$ on $ \cA$ of length at most $s+1$ except $(+)^{s+1}$;
% $(+-)^s+ \geq W$ for every word $W$ on $ \cA$ of length at most $s+1$ except $(-)^{s+1}$;

\item  $(+-)^{s} \geq W$ for every word $W$   on $\cA$ of length at most $s$, and every $W$ of length $s+1$ containing at least one $*$ except those of the form $W = (-)^r\,*\,(+)^l$.

\end{enumerate}

\end{lemma}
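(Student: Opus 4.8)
The plan is to reduce both statements to elementary facts about subwords, exploiting that in the order on words $*$ lies below both $+$ and $-$, that $\epsilon$ is the least word, and Lemma~\ref{lemma-utilities}~(3) (a subword lies below) together with Lemma~\ref{lemma-utilities}~(7) (which, by an immediate induction, gives $u_1\cdots u_k\le v_1\cdots v_k$ whenever $u_i\le v_i$ for every $i$). Concretely, I would first record two purely combinatorial facts about words over $\{+,-\}$: \textbf{(C1)} every such word of length at most $s+1$ other than $(+)^{s+1}$ is a subword of the alternating word $(-+)^s-$; and \textbf{(C2)} every such word of length at most $s$, as well as every such word of length $s+1$ that is not of the form $(-)^a(+)^b$, is a subword of $(+-)^s$. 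Both follow from the same greedy left-to-right placement: split $X$ into maximal runs of equal letters and feed its letters one by one into the (alternating) target word, always choosing the first free slot; the position pointer then advances by $2$ inside a run and by $1$ across a run boundary, so after $m$ runs it has reached $2|X|-m$ or $2|X|-m+1$ according as $X$ does or does not begin with the target's first letter; the stated hypotheses are exactly what keeps this final position within the length of the target, and there are clearly enough letters of each sign in the target. By Lemma~\ref{lemma-utilities}~(3), (C1) yields $X\le(-+)^s-$ and (C2) yields $X\le(+-)^s$ for the words in question.

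For part~(1), let $W$ be a word on $\cA$ with $|W|\le s+1$ and $W\ne(+)^{s+1}$; I may assume $W\ne\epsilon$. Replacing every $*$ in $W$ by $-$ produces a word $X$ over $\{+,-\}$ with $|X|=|W|\le s+1$ and $X\ne(+)^{s+1}$ (if $W$ contained a $*$ then $X$ contains a $-$, and otherwise $X=W$). By (C1), $X\le(-+)^s-$; and $W\le X$ because each letter of $W$ lies below the corresponding letter of $X$ (using $*\le-$). Transitivity gives $(-+)^s-\ge W$.

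For part~(2), the case $|W|\le s$ is identical: replacing every $*$ by $+$ gives $X$ over $\{+,-\}$ with $|X|\le s$, so $X\le(+-)^s$ by (C2) and $W\le X$, hence $(+-)^s\ge W$. So suppose $|W|=s+1$, that $W$ contains a $*$, and that $W$ is not of the form $(-)^r*(+)^l$. It will suffice to exhibit a word $X$ obtained from $W$ by replacing each $*$ by $+$ or $-$ such that $X$ is not of the form $(-)^a(+)^b$: then $X\le(+-)^s$ by (C2) and $W\le X$ gives $(+-)^s\ge W$. If some non-$*$ letter $+$ of $W$ precedes some non-$*$ letter $-$, any completion $X$ works. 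Otherwise the non-$*$ letters of $W$, read in order, form a word $(-)^a(+)^b$. If $W$ has at least two $*$'s, send the first one to $+$ and a later one to $-$ (and the rest, say, to $+$); then $X$ has a $+$ before a $-$. If $W$ has exactly one $*$, write $W=P*Q$ with $P,Q$ over $\{+,-\}$; since $W$ is not $(-)^r*(+)^l$, either $P$ contains a $+$ or $Q$ contains a $-$. In the first case take $X=P\,(-)\,Q$, where the last $+$ of $P$ precedes the inserted $-$; in the second take $X=P\,(+)\,Q$, where the inserted $+$ precedes the first $-$ of $Q$. In every case $X$ is not of the form $(-)^a(+)^b$, which completes part~(2).

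The combinatorial facts (C1)--(C2) are routine counting; I expect the main obstacle to be the last paragraph -- namely, choosing how to resolve the $*$'s in a length-$(s+1)$ word so that the result is a subword of $(+-)^s$, and being sure that the words admitting no such resolution are \emph{precisely} those of the form $(-)^r*(+)^l$ (the converse being an easy check, since $(-)^a(+)^b$ and $(+)^{s+1}$ have only maximal letters and fail to be subwords of the relevant targets). This is where the somewhat odd list of exceptions in the statement comes from.
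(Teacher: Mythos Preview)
Your argument is correct. For part~(1) you and the paper do essentially the same thing: both show that (after disposing of the $*$'s) $W$ is a subword of $(-+)^s-$; your greedy run-counting is just a cleaner way of saying what the paper phrases as ``insert at most $s-1$ symbols in $W$ to obtain an alternating word''. For part~(2) the routes genuinely diverge. The paper picks one $*$, writes $W=W_1*W_2$, applies part~(1) (or its dual) to the piece that is non-empty and not a constant $(\pm)$-word, and concatenates: $W_1*W_2 \le (+-)^a*(-+)^{b-1}- = (+-)^a(*-)(+-)^{b-1}\le(+-)^s$. You instead prove the companion subword fact~(C2) directly and then show that any non-excluded $W$ admits a $\{+,-\}$-completion avoiding the shape $(-)^a(+)^b$, with a short case analysis on the number of $*$'s. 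Your approach is more uniform (both parts via the same subword mechanism) and makes the role of the exceptional words $(-)^r*(+)^l$ transparent --- they are exactly those whose every completion has the forbidden shape. The paper's approach is shorter, since it recycles~(1), but the ``WLOG by duality'' step relies on $(+-)^s$ being self-dual, which your argument does not need.
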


\begin{proof}  We shall use the obvious fact that any word $W$ of length at most $k$ is below both $(+-)^k$ and $(-+)^k$. \\

(1) Clearly $(-+)^s- \not\geq (+)^{s+1}$. For the other direction, we may assume $W$ has length $s+1$ and is non-constant. We can then insert at most $s-1$ symbols from $\{+,-\}$ in $W$ to obtain an alternating word of length at most $2s$; if it has length at most $2s-1$ then we can, if need be, add $-$ at one or both ends; in the case it has length $2s$, since it is alternating it starts or ends with a $-$ and thus we can add $-$ at the other end and we are done.  \\

(2) It is easy to see that  $(+-)^{s} \not\geq  (-)^r\,*\,(+)^l$ if $r+l=s$. For the other statement, we can assume $W$ has length $s+1$ and is not of the form $W = (-)^r\,*\,(+)^l$.  Write $W=W_1*W_2$ where $|W_1|=a$ and $|W_1|=b$ with $a+b=s$. Without loss of generality (the other case is dealt with using the dual of statement (1)), $W_2$ is non-empty and not equal to $(+)^b$. By (1) we have that 
$W_1*W_2 \leq (+-)^a * (-+)^{b-1}- =  (+-)^a (*-) (+-)^{b-1} \leq (+-)^s$.

\end{proof}

Observe that cycles that fail the path condition satisfy a simple necessary condition: by Theorem \ref{theorem-char}, clearly every such cycle $\bbG$ of girth at least 4 is of the form $\bbG=C(\bbP+)$ for some self-dual path $\bbP$ of length at least 3  which contains at least two $*$'s.

\begin{theorem} \label{theorem-last} Let $\bbP$ be a self-dual path such that $|P| \geq 3$ and $|\bbP|-|\bbP'| \geq 2$.  Then the cycle $\bbG = C(\bbP \, +)$
 is S\l upecki.\end{theorem}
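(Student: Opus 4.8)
The plan is to verify the hypothesis of Theorem~\ref{theorem-embedding}. Note first that $\bbG$ has girth $n=|\bbP|+1\ge 4$, so that criterion applies. Fix $p\ge 2$ and an onto polymorphism $f\colon\bbG^p\to\bbG$; we must produce an embedding $e\colon\bbG\hookrightarrow\bbG^p$ such that $f$ is onto on $e(\bbG)$. The key preliminary reduction is that it suffices to find a \emph{homomorphic section} of $f$, i.e.\ a homomorphism $e\colon\bbG\to\bbG^p$ with $f\circ e=\mathrm{id}_\bbG$. Indeed, such an $e$ is automatically injective and arc-reflecting: if $e(u)=e(v)$ then $u=f(e(u))=f(e(v))=v$, and if $(e(u),e(v))$ is an arc of $\bbG^p$ then $(u,v)=(f(e(u)),f(e(v)))$ is an arc of $\bbG$ since $f$ is a homomorphism; hence $e$ is an embedding, and clearly $f$ is onto on $e(\bbG)$. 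Conversely, if $e$ is any embedding with $f$ onto on $e(\bbG)$, then $f\circ e$ is an onto endomorphism of the finite digraph $\bbG$, hence an automorphism, hence equal to $\mathrm{id}_\bbG$ because $\mathrm{Aut}(\bbG)$ is trivial: by Lemma~\ref{lemma-automorphisms} this holds as soon as $h(w(\bbP)+)=1$, which is the case since $\bbP$ is self-dual and therefore $h(w(\bbP))=0$. So the whole problem reduces to: \emph{every onto polymorphism of $\bbG$ has a homomorphic section.}

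To build the section, fix a labelling $0,1,\dots,n-1$ of the vertices of $\bbG$ for which the spine $\bbP$ occupies $0<1<\dots<n-1$ with edge-word $w(\bbP)=x_1\cdots x_{n-1}$ and the remaining arc is $(n-1,0)$ (the extra ``$+$''). A homomorphic section is exactly a choice of tuples $\mathbf a_v\in f^{-1}(v)$, one per vertex $v$, such that $\mathbf a_0,\dots,\mathbf a_{n-1}$ is a closed walk of shape $w(\bbP)+$ in $\bbG^p$ — equivalently, for every coordinate $j$ the $j$-th projections form such a closed walk in $\bbG$. Since $f$ is onto, preimages $\mathbf a_v$ exist, but an arbitrary choice need not be consecutive in $\bbG^p$, and this is repaired using the hypothesis $|\bbP|-|\bbP'|\ge 2$. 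Each of the (at least two) symmetric edges of $\bbP$ marks a position where, coordinate by coordinate, one is free to traverse a walk of \emph{any} shape of bounded length; by Lemma~\ref{lemma-alternating} an alternating word $(+-)^s$, resp.\ $(-+)^s-$, of length comparable to $n$ dominates, in the order $\le$, essentially every word of that length, so in each coordinate the necessary connecting walks between consecutive preimages exist, and Lemma~\ref{lemma-extend} (the metric criterion for extending a partial map to a path) assembles these coordinatewise walks into honest homomorphisms. Concretely I would first secure an arc $\mathbf a_{n-1}\to\mathbf a_0$ of $\bbG^p$ with $f(\mathbf a_{n-1})=n-1$ and $f(\mathbf a_0)=0$, realising the extra ``$+$'' (using surjectivity of $f$ together with the very restricted local picture around the arc $(n-1,0)$ in a cycle), and then extend $\{\mathbf a_0,\mathbf a_{n-1}\}$ to a homomorphism from $P(w(\bbP))$ into $\bbG^p$ with $f$-image the inclusion $\{0,\dots,n-1\}\hookrightarrow\bbG$, soaking up all the slack at the two starred positions.

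The delicate point, and the step I expect to be the main obstacle, is global consistency: the corrective detours are chosen independently in the $p$ coordinates, yet the final map must satisfy $f\circ e=\mathrm{id}_\bbG$ on the nose, so one cannot afford to lengthen the spine. This is precisely where self-duality of $\bbP$ and the presence of \emph{two} symmetric edges (not one) do real work: self-duality makes the forward reading $w(\bbP)$ and the backward reading $\overline{w(\bbP)}$ interchangeable, so a correction that handles ``$+$-type'' slack at one starred edge supplies, through the mirror symmetry $i\mapsto(n-1)-i$ of $\bbP$, a correction for ``$-$-type'' slack at the paired starred edge. I would organise the argument as a case analysis according to whether the diagonal endomorphism $x\mapsto f(x,\dots,x)$ is onto, equivalently whether $f$ is idempotent: if it is, take $e$ to be the diagonal; if not, this endomorphism folds $\bbG$ onto a proper subpath, and one uses the two starred edges of $\bbP$ to ``unfold'' it back to all of $\bbG$ inside $\bbG^p$, adjusting one starred edge at a time while keeping $f\circ e$ equal to the identity. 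Once a homomorphic section of $f$ is in hand, Theorem~\ref{theorem-embedding} yields that $\bbG$ is S\l upecki.
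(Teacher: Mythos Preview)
Your framework matches the paper's: triviality of $\mathrm{Aut}(\bbG)$ (from $h(w(\bbP))=0$) reduces the hypothesis of Theorem~\ref{theorem-embedding} to the existence of a homomorphic section of $f$, and one begins by securing an arc $(\alpha,\beta)$ in $\bbG^p$ with $(f(\alpha),f(\beta))$ equal to the endpoints of the ``$+$'', via an alternating-word argument based on Lemma~\ref{lemma-alternating}. But the extension step --- building the rest of the section along $\bbP$ --- is where your proposal has a genuine gap. Lemma~\ref{lemma-extend} concerns extending partial maps \emph{into} a path, not \emph{from} one, so it cannot produce the map into $\bbG^p$ you want; and your idempotency dichotomy does not help: in the non-idempotent case you speak of ``unfolding at two starred edges'' but give no mechanism that forces $f\circ e$ to equal the identity at the interior vertices. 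Nothing you wrote pins down the value of $e$ at any vertex other than the two ends.

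The idea you are missing is that the self-dual path $\bbP$ has a canonical \emph{centre} --- a vertex $u$ when $|\bbP|$ is even (write $\bbP=\bbT\,\overline{\bbT}$), or a symmetric edge $\{u,v\}$ when $|\bbP|$ is odd (write $\bbP=\bbT*\overline{\bbT}$) --- and one proves that every onto polymorphism preserves it: $f(u,\dots,u)=u$, respectively $f(\{u,v\}^p)\subseteq\{u,v\}$. The point is that any $c$ with the property ``$x\,\bbT\,c$ for all $x\in G$'' yields an automorphism of $\bbG$ taking $u$ to $c$, whence $c=u$ since $\mathrm{Aut}(\bbG)$ is trivial; surjectivity of $f$ then transports this characterisation to the diagonal. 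With the centre pinned, connect $\alpha$ and $\beta$ to the diagonal centre in $\bbG^p$ by $\bbT$-walks; their $f$-images are walks of length $|\bbT|$ from the ends of the ``$+$'' to the centre, hence forced by length to be exactly the two halves of $\bbP$, and the embedded cycle is complete. The hypothesis $|\bbP|-|\bbP'|\ge2$ enters only in the alternating-word step (it guarantees enough $*$'s around the cycle for Lemma~\ref{lemma-alternating} to apply to antipodal pairs), not as ``slack'' to be absorbed later.
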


%% this originally said P contains consecutive stars, but looking at the proof it seems all se need is that P contains at least 2 stars. 

\begin{proof}  First notice that since $\bbP$ is self-dual, $h(\bbP)=0$ so $Aut(\bbG)$ is trivial by Lemma  \ref{lemma-automorphisms}. 
Let $(a,b)$ be the arc of $\bbG$ corresponding to the $+$ (see Figure \ref{cycle-even-odd}). 
We split the proof in two cases, according to the parity of $|\bbP|$, although the structure of both proofs is the same. 

%\begin{figure}[htb]
%\begin{center}
%\includegraphics[scale=0.5]{cycle-even-odd.pdf}
 %\caption{The cycle $\bbG = C(\bbP \, +)$ (clockwise, from $b$), with $|\bbP|$ even (left) and  $|\bbP|$ odd (right).} \label{cycle-even-odd}
% \end{center}
%\end{figure}

\begin{figure}[htb]
\begin{center}
  \begin{minipage}[b]{0.4\textwidth}
    \includegraphics[width=\textwidth]{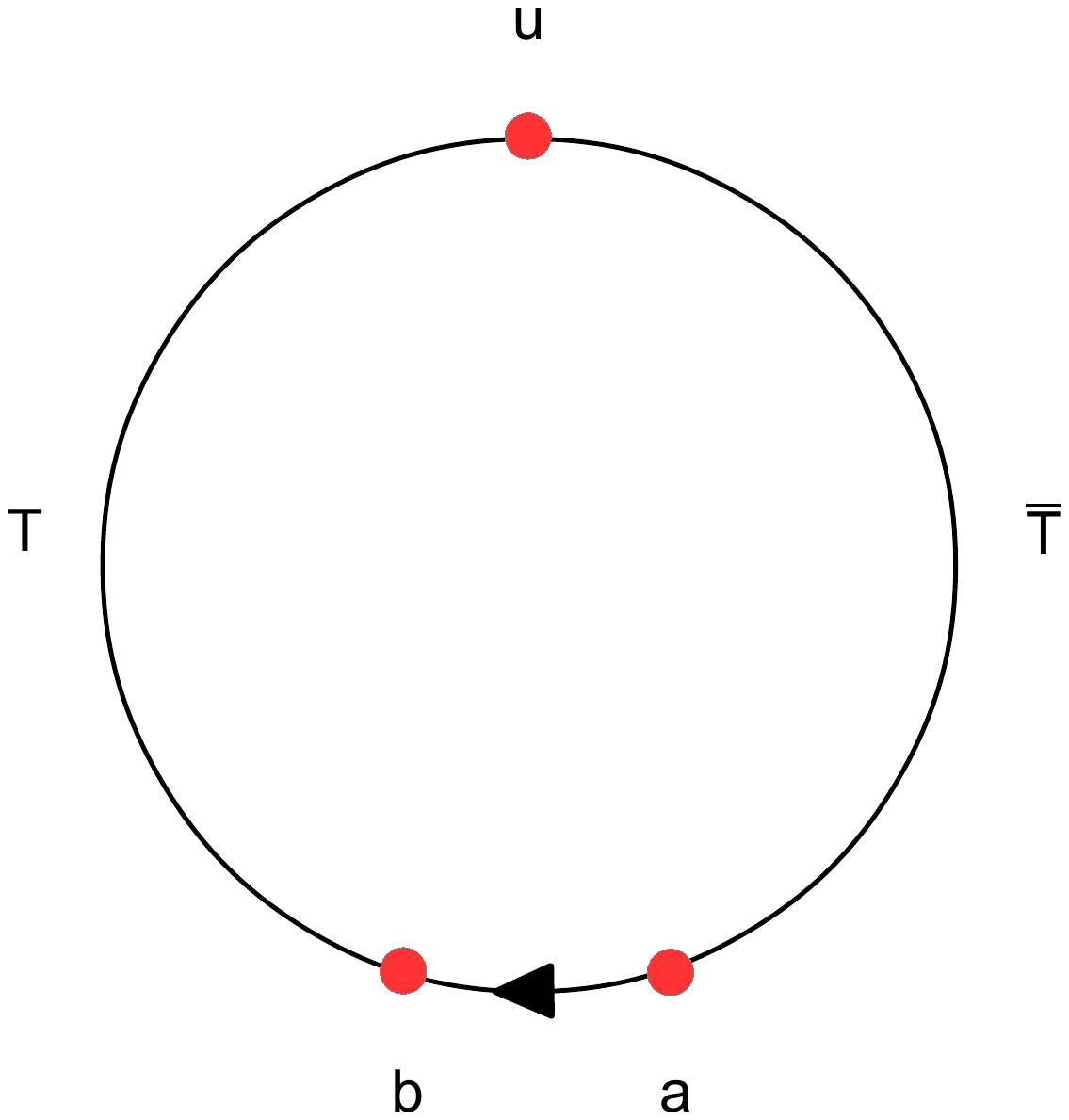}
  \end{minipage}
  \hspace{2cm}
  \begin{minipage}[b]{0.4\textwidth}
    \includegraphics[width=\textwidth]{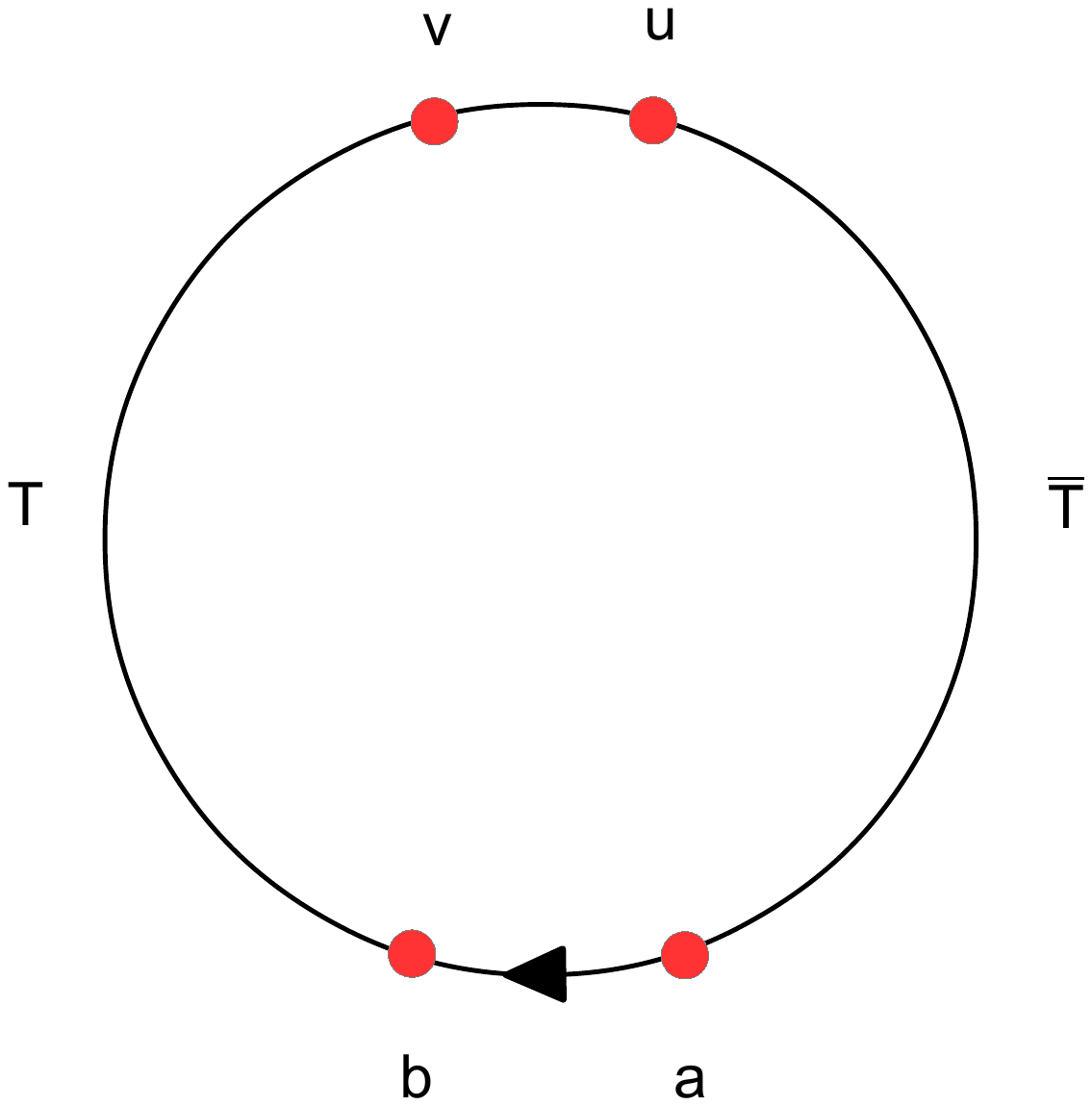}
  
  \end{minipage}
  \caption{The cycle $\bbG = C(\bbP \, +)$ (clockwise, from $b$), with $|\bbP|$ even (left) and  $|\bbP|$ odd (right).} \label{cycle-even-odd}
 \end{center}
\end{figure}

\begin{enumerate}

\item {\bf $|\bbP|$ is odd.} Then $\bbP=\bbT \, * \, \overline{\bbT}$ for some path $\bbT$; let $\{u,v\}$ be the vertices incident to the symmetric edge corresponding to the middle $*$, see  Figure \ref{cycle-even-odd}.  Let $t$ denote the length of $\bbT$.

 \noindent{\bf Claim 0o.} {\em Let $Z = (+-)^{t}$. Then $x \, Z \, y$ for all $x,y \in G$.}\\

 \noindent {\em Proof of Claim 0o.}  The cycle has girth $2t+2$, hence the (usual graph) distance on the cycle from $x$ to $y$ is at most $t+1$. 
Apply Lemma \ref{lemma-alternating} (2): if the usual graph distance between $x$ and $y$ is $t$ or less we are done; otherwise $x$ and $y$ are antipodal; since $\bbP$ contains at least three $*$,  there is a word from $x$ to $y$ of length $t+1$ which contains at least two $*$ and we are done in this case also. \\

 \noindent{\bf Claim 1o.}  {\em For all $x \in G$, either $x \bbT u$ or $x \bbT v$. }\\
 
  \noindent {\em Proof of Claim 1o.}  Immediate. \\

 \noindent{\bf Claim 2o.}  {\em Let $\{c,d\}$ be a symmetric edge of $\bbG$ such that $x \bbT c$ or $x \bbT d$ for all $x \in G$. Then $\{c,d\} = \{u,v\}$.  }\\
 
  \noindent {\em Proof of Claim 2o.}  Let $c'$ and $d'$ denote the respective antipodes of $c$ and $d$ on the cycle; by hypothesis the subpath $\bbY$ of $\bbG$ of length $t$ that connects $d'$ and $c$ satisfies $\bbY \leq \bbT$ and similarly for the subpath $\bbZ$  of $\bbG$ of length $t$ that connects $c'$ and $d$. Hence  the map $h:G \rightarrow G$ that sends the path $b \bbT v$ to the path $d' \bbY c$ and the path $a \bbT u$ to the path $c' \bbZ d$ is either a bijective endomorphism, or its composition with the reflection exchanging $c'$ and $d'$ is. In any case, we obtain an automorphism of $\bbG$ mapping the edge $\{u,v\}$ to the edge $\{c,d\}$. Since $Aut(\bbG)$ is trivial  we are done. \\
  
  \noindent{\bf Claim 3o.}  {\em For any  onto polymorphism $f$ from $\bbG^p$ onto $\bbG$ we have \\$f(\{u,v\}^p) \subseteq \{u,v\}$. }\\
  
    \noindent {\em Proof of Claim 3o.} Since $\{u,v\}^p$ is a clique,  $f(\{u,v\}^p) \subseteq \{c,d\}$ for some symmetric edge $\{c,d\}$ of $\bbG$.  Given any $x \in G$, there exists $\gamma \in G^p$ such that $f(\gamma) = x$. By Claim 1o there exists some $\delta \in \{u,v\}^p$ such that $\gamma \bbT \delta$ and so $x \bbT f(\delta) \in \{c,d\}$. We conclude by applying Claim 2o.    \\ 
  
   Let $f$ be a surjective polymorphism from $\bbG^p$ onto $\bbG$. Let $\alpha,\beta \in \bbG^p$ such that $f(\alpha)=a$ and $f(\beta)=b$. By Claim 0o we have $ \alpha ((-+)^{t}) \beta$, and since the longest substring of $\bbG$ from $b$ to $a$ has length $2t+1 > 2t$, the image of $ \alpha ((-+)^{t}) \beta$ under $f$ must use the arc $(a,b)$; hence without loss of generality we have that $(\alpha,\beta)$ is an arc in $\bbG^p$. By Claim 1o there exist $\gamma, \delta \in \{u,v\}^p$ such that $\alpha \bbT \gamma$ and $\beta \bbT  \delta$, and hence 
$a \bbT f(\gamma)$ and $b \bbT f(\delta)$; by Claim 3o and a simple distance argument it follows  that $f(\gamma) = u$ and $f(\delta)=v$. Because $\{u,v\}^p$ is a clique $(\gamma,\delta)$ is a symmetric edge, and the paths  $\alpha \bbT \gamma$ and $\beta \bbT \delta$ must both be isomorphic to  $\bbT$ since their endpoints are mapped to $a$ and $u$, and to $b$ and $v$ respectively, and so are mapped to the unique path of the correct length between those. Hence we have found a copy of $\bbG$ in $\bbG^p$ on which $f$ is onto, and we conclude using Theorem \ref{theorem-embedding}. \\

\item {\bf $|\bbP|$ is even.} Then $\bbP=\bbT \overline{\bbT}$ for some path $\bbT$; let $u$ denote the middle vertex of the path $\bbP$, see  Figure \ref{cycle-even-odd}. 
Let $t$ denote the length of $\bbT$. Notice that $t \geq 2$ since  $|P| \geq 3$. 

Let $\bbP_t = (+)^{t-1}\,** (-)^{t-1}$. 

 \noindent{\bf Claim 0e.} {Let $Z = (-+)^{t-1}-$. If $\bbG \neq C(\bbP_t\,+)$ then $x \, Z \, y$ for all $x,y \in G$; in any case $x \, (-+)^t\, y$ for all $x,y \in G$. }

 \noindent {\em Proof of Claim 0e.} Notice first that $(-+)^t \geq Z$, and so if we can prove $x \, Z \, y$  for a pair $x,y$ then the second statement follows for the same pair. The cycle has girth $2t+1$, hence the (usual graph) distance on the cycle from $x$ to $y$ is at most $t$.  Applying  Lemma \ref{lemma-alternating} (1) with $s=t-1$, we can handle all cases except if there is a path from $x$ to $y$ that equals $(+)^t$; since the arcs incident to $u$ are involutes of one another this path cannot contain both of them, and since both $\bbT$ and $\overline{\bbT}$ contain a $*$ this path in fact cannot use either of these arcs.  Similarly, the arcs incident to the arc $(a,b)$ are involutes of one another, so the path cannot contain them both; it follows that unless $x=a$, $y$ is the neighbour of $u$ at distance $t$ from $a$ and $\bbT = (+)^{t-1}\,*$, we do have that $x Z y$.  For this lone case, we observe that the remaining part of the cycle (which is  $C(\bbP_t\,+)$) gives $x Y y$ where $Y = (+)^{t-1}\,**$, and clearly $(-+)^t = (-+)^{t-1}(-+) \geq Y$. \\

 \noindent{\bf Claim 1e.}  {\em $x \bbT u$ for all $x \in G$. }\\
 
  \noindent {\em Proof of Claim 1e.}  Immediate. \\

 \noindent{\bf Claim 2e.}  {\em If $x \bbT v$ for all $x \in G$ then $v=u$. }\\
 
  \noindent {\em Proof of Claim 2e.}  Let $c$ and $d$ denote the two vertices at distance $t$ from $v$ on the cycle; by hypothesis the subpath $\bbY$ of $\bbG$ of length $t$ that connects $c$ and $v$ satisfies $\bbY \leq \bbT$ and similarly for the subpath $\bbZ$  of $\bbG$ of length $t$ that connects $d$ and $v$. Hence  the map $h:G \rightarrow G$ that sends the path $b \bbT u$ to the path $c \bbY v$ and the path $a \bbT u$ to the path $d \bbZ v$ is either a bijective endomorphism, or its composition with the reflection that fixes $v$ and exchanges $c$ and $d$ is. In any case, we obtain an automorphism of $\bbG$ mapping  $u$ to $v$; since $Aut(\bbG)$ is trivial we are done.  \\
  
  \noindent{\bf Claim 3e.}  {\em For any polymorphism $f$ from $\bbG^p$ onto $\bbG$ we have \\$f(u,u,\dots,u)= u$. }\\
  
    \noindent {\em Proof of Claim 3e.} This follows from Claims 1e and 2e and the fact that $f$ is onto. \\
    
     Let $f$ be a surjective polymorphism from $\bbG^p$ onto $\bbG$. Let $\alpha,\beta \in \bbG^p$ such that $f(\alpha)=a$ and $f(\beta)=b$. Suppose first that $\bbP \neq \bbP_t$. By Claim 0e we have $ \alpha ((-+)^{t-1}-) \beta$, and since the longest substring of $\bbG$ from $b$ to $a$ has length $2t > 2t-1$, the image of $ \alpha ((-+)^{t-1}-) \beta$ under $f$ must use the arc $(a,b)$;  in the case $\bbP = \bbP_t$, we have that $ \alpha ((-+)^{t}) \beta$; since the longest substring of $\bbG$ from $a$ to $b$ starts  with a $+$, a similar length argument shows that  the image of $ \alpha ((-+)^{t}) \beta$ under $f$ must use the arc $(a,b)$; hence without loss of generality we have that $(\alpha,\beta)$ is an arc in $\bbG^p$, in all cases. Let $\gamma = (u,u,\dots,u)$. By Claim 1e $\alpha \bbT \gamma$ and $\beta \bbT \gamma$, and by Claim 3e the paths  $\alpha \bbT \gamma$ and $\beta \bbT \gamma$ must both be isomorphic to  $\bbT$ since their endpoints are mapped to $a$ and $u$, and to $b$ and $u$ respectively, and so are mapped to the unique path of the correct length between those. Hence we have found a copy of $\bbG$ in $\bbG^p$ on which $f$ is onto, and we conclude using Theorem \ref{theorem-embedding}. \\

\end{enumerate}
\end{proof}

 \bibliographystyle{plain}
\bibliography{../ben-12-01-2016-new}

\end{document}